\newtheorem{theorem}{Theorem}[section]
\newtheorem{corollary}[theorem]{Corollary}
\newtheorem{lemma}[theorem]{Lemma}
\newtheorem{proposition}[theorem]{Proposition}
\theoremstyle{definition}
\newtheorem{definition}[theorem]{Definition}
\newtheorem{remark}[theorem]{Remark}
\numberwithin{equation}{section}
\DeclareMathOperator{\supp}{supp}
\DeclareMathOperator{\im}{Im}
\DeclareMathOperator{\dom}{dom}
\DeclareMathOperator{\ran}{ran}
\DeclareMathOperator{\Ext}{Ext}
\DeclareMathOperator{\ac}{ac}
\DeclareMathOperator{\diag}{diag}
\DeclareMathOperator{\loc}{loc}
\DeclareMathOperator{\tr}{tr}
\def\Ext{{\rm Ext}}
\def\mul{{\rm mul\,}}
\def\wt#1{{{\widetilde #1} }}
\def\wt#1{{{\widetilde #1} }}
\newcommand\I{{\rm{i}}}
\newcommand\gr{{\rm{gr}}}
\newcommand\R{{\mathbb{R}}}
\newcommand\C{{\mathbb{C}}}
\newcommand\K{{\mathbb{K}}}
\newcommand{\bC}{\mathbb{C}}
\newcommand{\bO}{\mathbb{O}}
\newcommand{\bR}{\mathbb{R}}
\newcommand\cB{{\mathcal{B}}}
\newcommand\cC{{\mathcal{C}}}
\newcommand\cH{{\mathcal{H}}}
\newcommand\cN{{\mathcal{N}}}
\newcommand{\kA}{{\mathcal A}}
\newcommand{\kB}{{\mathcal B}}
\newcommand{\kF}{{\mathcal F}}
\newcommand{\kG}{{\mathcal G}}
\newcommand{\kH}{{\mathcal H}}
\newcommand{\kL}{{\mathcal L}}
\newcommand{\kM}{{\mathcal M}}
\newcommand{\kR}{{\mathcal R}}
\newcommand{\kT}{{\mathcal T}}
\newcommand{\kZ}{{\mathcal Z}}
\newcommand{\gG}{{\Gamma}}
\newcommand{\gT}{{\Theta}}
\newcommand{\gl}{{\lambda}}
\newcommand{\ga}{{\alpha}}
\newcommand{\gL}{{\Lambda}}
\newcommand{\gO}{{\Omega}}
\newcommand{\gs}{{\sigma}}
\newcommand{\f}{{\varphi}}
\newcommand\gH{{\mathfrak{H}}}
\newcommand{\gotH}{{\mathfrak{H}}}
\title{\bf Non-compact quantum graphs with summable matrix potentials}
\author{\bf Yaroslav Granovskyi, Mark Malamud and \fbox{Hagen Neidhardt}\footnote {Hagen Neidhardt deceased on 23 March 2019}}
\date{}
\begin{document}

\maketitle
%
%
%

{\bf Abstract.}
Let $\mathcal{G}$ be a metric  noncompact  connected graph with finitely many edges.
The main object of the paper is the Hamiltonian ${\bf H}_{\alpha}$
associated in $L^2(\mathcal{G};\mathbb{C}^m)$ with a matrix Sturm-Liouville expression and boundary delta-type conditions at each vertex.
Assuming that the potential matrix is summable and applying the technique of boundary triplets and the corresponding Weyl functions,
we show that the singular continuous spectrum of the Hamiltonian ${\bf H}_{\alpha}$
as well as  any other self-adjoint realization of the Sturm-Liouville expression is empty.
We also indicate conditions on the graph ensuring  pure absolute continuity of the positive part of ${\bf H}_{\alpha}$.
Under an additional condition on the potential matrix, a Bargmann-type estimate
for the number of negative eigenvalues of  ${\bf H}_{\alpha}$ is obtained.
Additionally, for a star graph $\mathcal{G}$ a formula is found for the scattering matrix of the pair $\{{\bf H}_{\alpha}, {\bf H}_D\}$, where  ${\bf H}_D$ is the Dirichlet operator on $\mathcal{G}$.

{\bf Keywords.}
Quantum graphs, matrix Sturm-Liouville expression, delta-type conditions, absolutely continuous spectrum,
singular continuous spectrum, point spectrum, Bargmann-type estimates, negative eigenvalues, scattering matrix, boundary triplet, Weyl function.



\tableofcontents

\section{Introduction}

The spectral theory of quantum graphs with a finite or infinite number of edges has been actively developed over the last
 three decades (see ~\cite{BCFK06,BerKuch13,DavPush11,EKMN18,Gerasim1988,GerPav88,Pan2012,Post2012}
 and the references therein). In particular,  Schr\"odinger and Laplace operators on the lattices, carbon nano-structures and periodic metric graphs have attracted a lot of attention (see e.g. \cite{KorLapt18,KorSab15,KuchPost07,KuchZeng03}).

In this paper we consider  a noncompact connected quantum graph
 $\mathcal{G}=(\mathcal{V},\mathcal{E})$ with finitely many edges $\mathcal{E}$ and vertices $\mathcal{V}$
 assuming  that  at least one of its  edges is  infinite.
We assume that $\mathcal{G}$ has no "loops" ("tadpoles") and multiple edges, i.e.
no edge starts and ends at the same vertex, and no edges connecting two same vertices;
this can always be achieved by introducing additional vertices, if necessary.
The main object of the paper is the Hamiltonian
${\bf H}_{\alpha}:={\bf H}_{\alpha,Q}$ associated in $L^2(\mathcal{G}; \mathbb{C}^m)$ with the matrix
Sturm-Liouville expression $\mathcal{A}=-\frac{d^2}{dx^2}+Q$  with summable and (pointwise) self-adjoint potential matrix
$Q=Q^*\in L^1(\mathcal{G};\mathbb{C}^{m\times m})$ and boundary delta-type conditions at each vertex $v\in \mathcal{V}$:
  \begin{equation}\label{delt-intro}
\begin{cases}
f \quad\text{is continuous at}\quad v,\\
\sum_{e\in E_v}f'_e(v)=\alpha(v)f(v),
\end{cases} \quad v\in \mathcal{V},
  \end{equation}
(see \cite{BerKuch13}, \cite{Post2012}, and formula \eqref{delt} below),
where $\alpha: \mathcal{V} \rightarrow \mathbb{C}^{m\times m}$, $\alpha(\cdot)=\alpha(\cdot)^*$ is a matrix function.
For $\alpha=0$, condition \eqref{delt-intro} turns into the well-known Kirchhoff condition,
and we denote by ${\bf H}_{\text{kir}}:={\bf H}_{0,Q}$  the corresponding Hamiltonian. To treat the Hamiltonian
${\bf H}_{\alpha,Q}$ in the framework of extension theory we introduce the minimal operator $A_{\min} = A=\bigoplus_{e\in \mathcal{E}}A_e$ associated
with the expression  $\mathcal{A}$ on $\mathcal G$ and being a direct sum of the minimal operators $A_e$ on each edge
$e\in \mathcal{E}$.

The paper consists of two  parts. In the first part we consider certain spectral problems for
arbitrary  noncompact connected quantum graphs  $\mathcal{G}$ with finitely many edges.
Namely, we show that each realization $\wt A$ of $\mathcal{A}$ (each extension of $A$),  including ${\bf H}_{\alpha}$,
has no singular  continuous spectrum  although it  may have a discrete set of  positive eigenvalues embedded in
the absolutely continuous  spectrum $\sigma_{ac}(\wt A) = [0,\infty)$.
We  also investigate  negative spectrum of ${\bf H}_{\alpha, Q}$  and  establish  Bargmann type estimates.

In the second part we consider only  \emph{quantum star graphs}
with finitely many edges. In particular,  we  indicate  explicit expressions for the numbers
of negative eigenvalues  $\kappa_-({\bf H}_{\alpha, Q})$ of   ${\bf H}_{\alpha, Q}$  in the case
of non-negative $A\ge 0$.
Moreover,  we compute the scattering matrix
 $\{S({\bf H}_{\alpha},{\bf H}_{D};\lambda)\}_{\lambda\in\mathbb{R}_+}$ for  the pair
$\{{\bf H}_{\alpha},{\bf H}_{D}\}$  where ${\bf H}_{D}:={\bf H}_{D,Q}$ is the Dirichlet operator
on the graph $\mathcal{G}$.

In Section \ref{sect.Prelim} we present necessary information on boundary triplets, the corresponding Weyl functions,
 and the scattering matrix.
We also present necessary facts on description of absolutely continuous and singular
continuous spectra of extensions in terms of  the limit behavior of the Weyl function on
the real axis. In Section \ref{sect.GMNP} following \cite{GMNP17}  we present the
necessary information on the spectral theory of Schr\"odinger operators on $L^2(\Bbb
{R}; \mathbb{C}^m)$ with summable potential matrix  $Q=Q^*$.

In Section \ref{sect.quagr} we prove our  first main result.
Its part reads as follows (see Theorem \ref{sc_spec} for more details).
   \begin{theorem}\label{sc_spec-intro}
Assume that graph $\mathcal{G}$ consists of $p_1>0$ leads and $p_2(\geq 0)$ edges,
let $Q\in L^1(\mathcal{G};\mathbb{C}^{m\times m})$.
Then for any  self-adjoint realization of $\mathcal{A}$  (extension of  the minimal operator $A$
generated by $\mathcal{A}$ on $\mathcal{G}$) the following holds:

\item[\;\;\rm (i)] The singular continuous spectrum $\sigma_{sc}(\widetilde{A})$ is empty, i.e.
$\sigma_{sc}(\widetilde{A}) = \emptyset$;

\item[\;\;\rm (ii)] The absolutely continuous spectrum $\sigma_{ac}(\widetilde{A})$ fills in the half-line $\mathbb{R}_+$, \\ $\sigma_{ac}(\widetilde{A})=[0,\infty)$
and it is of the constant multiplicity $mp_1$.

   \end{theorem}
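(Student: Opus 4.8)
The plan is to reduce the spectral analysis of an arbitrary self-adjoint realization $\widetilde A$ of $\mathcal A$ to the study of the limiting behaviour of a Weyl function, using the boundary triplet machinery recalled in Section~\ref{sect.Prelim}. First I would fix the minimal operator $A=\bigoplus_{e\in\mathcal E}A_e$ and construct an explicit boundary triplet $\Pi=\{\mathcal H,\Gamma_0,\Gamma_1\}$ for $A^*$: on each finite edge the deficiency indices contribute a $2m$-dimensional piece, and on each lead an $m$-dimensional piece, so $\mathcal H\cong\mathbb C^{m(2p_2+p_1)}$ and the boundary maps are the usual traces $f\mapsto f(\cdot)$, $f\mapsto f'(\cdot)$ at the endpoints. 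Any self-adjoint realization $\widetilde A$ then corresponds to a self-adjoint linear relation $\Theta$ in $\mathcal H$, and by the standard Krein-type resolvent formula the spectral properties of $\widetilde A$ outside $\sigma(A_0)$ are governed by the Weyl function $M(\cdot)$ of $\Pi$, more precisely by the limits $M(\lambda+i0)$ for $\lambda\in\mathbb R$. I would choose $\Pi$ so that $A_0:=A^*\!\restriction\ker\Gamma_0$ is the Dirichlet operator ${\bf H}_D$, whose spectrum is purely absolutely continuous and fills $[0,\infty)$ with multiplicity $mp_1$ (this follows by decomposing into half-line Dirichlet problems and invoking the results of Section~\ref{sect.GMNP}).

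The decisive input is that the Weyl function $M(\cdot)$ of this triplet is block-diagonal with respect to the decomposition $\mathcal G=\bigsqcup e$, with the lead-blocks given by the Weyl functions of half-line Schr\"odinger operators with summable matrix potential and the finite-edge-blocks given by $2m\times 2m$ Weyl matrices of regular problems. By the results recalled in Section~\ref{sect.GMNP}, for a summable potential on the half-line the Weyl function $m_e(\lambda)$ extends continuously from $\mathbb C_+$ to $\mathbb C_+\cup(\mathbb R\setminus\{0\})$, with $\mathrm{Im}\,m_e(\lambda+i0)$ bounded and boundedly invertible on every compact subset of $(0,\infty)$ and vanishing on $(-\infty,0)$; the finite-edge blocks are entire. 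Hence $M(\cdot)$ itself has continuous boundary values $M(\lambda+i0)$ on $\mathbb R\setminus\{0\}$, the imaginary part $\mathrm{Im}\,M(\lambda+i0)$ has constant rank $mp_1$ on $(0,\infty)$ and rank $0$ on $(-\infty,0)$. Now I would invoke the criteria from Section~\ref{sect.Prelim} expressing $\sigma_{ac}(\widetilde A)$ and $\sigma_{sc}(\widetilde A)$ through $M(\cdot)$: since $M(\cdot)$ is continuous (in particular locally bounded) up to the real axis off the point $0$, the singular continuous part of every extension $\widetilde A$ is empty on $\mathbb R\setminus\{0\}$; a single point carries no continuous spectrum, so $\sigma_{sc}(\widetilde A)=\emptyset$, giving~(i). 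For~(ii), the absolutely continuous part of $\widetilde A$ coincides, up to the point $0$, with that of $A_0={\bf H}_D$ by the invariance principle for the a.c.\ parts of the resolvent-comparable pair $\{\widetilde A,A_0\}$ (the resolvent difference is finite rank), and the multiplicity is read off from the constant rank $mp_1$ of $\mathrm{Im}\,M(\lambda+i0)$ on $(0,\infty)$; hence $\sigma_{ac}(\widetilde A)=[0,\infty)$ with constant multiplicity $mp_1$.

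I would organise the write-up as: (a) construct $\Pi$ and identify $A_0={\bf H}_D$; (b) compute $M(\cdot)$ blockwise and cite the half-line facts from Section~\ref{sect.GMNP} to get continuity of $M(\lambda+i0)$ on $\mathbb R\setminus\{0\}$ together with the rank of its imaginary part; (c) apply the abstract spectral criteria from Section~\ref{sect.Prelim} to conclude emptiness of $\sigma_{sc}$ and the description of $\sigma_{ac}$ with multiplicity; (d) dispose of the exceptional point $\lambda=0$ separately (it is a single point, hence irrelevant for the continuous parts of the spectrum). The main obstacle I anticipate is step~(b): one must verify uniform (locally in $\lambda>0$) bounds and boundedly invertible imaginary part for the half-line Weyl matrix with a merely $L^1$ matrix potential, including a careful treatment of the threshold $\lambda=0$ where $m_e(\lambda+i0)$ may be singular; this is exactly the technical content imported from \cite{GMNP17} via Section~\ref{sect.GMNP}, and the remaining work is to check that assembling the blocks and passing through the self-adjoint relation $\Theta$ does not destroy these properties, which it does not because $\Theta$ is $\lambda$-independent and $M(\lambda+i0)$ stays finite and of constant imaginary-part rank.
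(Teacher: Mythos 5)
Your overall strategy coincides with the paper's: the same edge-wise boundary triplet with $A_0={\bf H}_D$, the block Weyl function whose lead blocks come from \cite{GMNP17}, the abstract criteria of Theorem \ref{IV.3}, and Kato--Rosenblum together with the finite-rank resolvent difference for part (ii). The gap is in the central step of (i). Theorem \ref{IV.3} is a statement about the \emph{reference} extension $A_0$ of a triplet, so to apply it to an arbitrary $\widetilde A=A_B$ you must pass to the transposed triplet $\Pi_B=\{\mathcal H,\Gamma_1-B\Gamma_0,-\Gamma_0\}$, whose Weyl function is $(B-M(z))^{-1}$; the object whose boundary behaviour must be controlled is this inverse, not $M(\cdot)$ itself. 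Continuity of $M(\lambda+i0)$ does not suffice: when $p_2>0$ one has $\operatorname{rank}\im M(\lambda+i0)=mp_1<\dim\mathcal H$, so the standard ``definite imaginary part'' argument for invertibility of $B-M(\lambda+i0)$ fails on the boundary, and your stated reason (``$M(\lambda+i0)$ stays finite and of constant imaginary-part rank'') does not rule out that $\det\bigl(B-M(\lambda+i0)\bigr)$ vanishes on an uncountable set. The paper closes exactly this point by a Schur-complement (Frobenius) computation: writing $B-M(z)$ in block form relative to $\mathcal H_{\infty}\oplus\mathcal H_{\text{fin}}$, it shows that the Schur complement $M_B(\lambda+i0)$ satisfies $\im\bigl(-M_B(\lambda+i0)\bigr)=\im M_{\infty}(\lambda+i0)$, which is invertible for every $\lambda>0$ off a countable set (the real poles of $M_{\text{fin}}$ together with the real zeros of the meromorphic determinant $\det(B_{22}-M_{\text{fin}})$); hence $(B-M(\lambda+i0))^{-1}$ exists off a countable set and Theorem \ref{IV.3}(ii) applies.

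Two further inaccuracies feed into this. The finite-edge blocks of $M$ are not entire but meromorphic (they contain $S_j(|e_j|,z)^{-1}$), with real poles; this is precisely what produces the exceptional countable set, and it makes your claim that $M(\lambda+i0)$ is continuous on all of $\mathbb R\setminus\{0\}$ false as stated. On $(-\infty,0)$ the lead Weyl functions may also have poles (at negative Dirichlet eigenvalues), which is why the paper treats $\mathbb R_-$ separately, via the finite-rank resolvent difference with ${\bf H}_D$ and Weyl's theorem, rather than through boundary values of $M$. Finally, you should also dispose of extensions not disjoint from $A_0$, for which $\Theta$ is a genuine linear relation and not a bounded operator $B$; the paper reduces this case by citing Lemma 2.12 of \cite{MalNei2011}.
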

In particular, the Hamiltonians  ${\bf H}_{\alpha,Q}$ have no sc-spectrum.

Theorem \ref{sc_spec-intro} generalizes the main  result of the paper \cite{GMNP17}
to the case of quantum graphs under discussion.  A special case of
Theorem \ref{sc_spec-intro} with $Q=0$  was  established  by B.-S. Ong \cite{Ong2006}  by using the limit
absorption principle.

In  Section \ref{sect.BargEst},  Theorem \ref{Bargkappa},  we establish  the following Bargmann type estimate for
the Hamiltonian ${\bf H}_{\alpha,Q}$  assuming that  $xQ\in L^1(\mathcal{G}; \mathbb{C}^{m\times m})$:
  \begin{equation}\label{Main_Barg_type_est_for_H_alpha,Q-intro}
\kappa_{-}({\bf H}_{\alpha,Q})\leq
\sum_{e\in \mathcal{E}} \left[\int_{e}x_e \cdot {\text{tr}}(Q_{e,-}(x))\,dx\right] + m|\mathcal{V}|.
  \end{equation}
Here $[a]$ is  the integer part of a number $a\in \Bbb R$, and $Q_{e,-}(\cdot)$  denotes the "negative"
part of the  potential matrix  $Q_{e}(\cdot)$ on the edge $e$.

We also  complete this formula by showing  that the numbers of negative eigenvalues
of the Hamiltonians  ${\bf H}_{\alpha, Q}$ and ${\bf H}_{\text{kir}}$ are related by the inequality
    \begin{equation}\label{eq:barg_estim_if_AN>0-intro}
\kappa_-({\bf H}_{\alpha, Q}) \le \kappa_-({\bf H}_{\text{kir}}) \ + \  \sum_{v\in
\mathcal{V}}\kappa_-(\alpha(v)).
    \end{equation}
In particular, if the  Kirchhoff  realization ${\bf H}_{\text{kir}}$ of $\mathcal A$ is
non-negative, ${\bf H}_{\text{kir}} \ge 0$,  then  $\kappa_-({\bf H}_{\alpha, Q}) \le
\sum_{v\in
\mathcal{V}}\kappa_-(\alpha(v)).$
Note that the proof of both estimates \eqref{Main_Barg_type_est_for_H_alpha,Q-intro}--\eqref{eq:barg_estim_if_AN>0-intro}
relies  on  formula \eqref{eq:main_quadr_form} for the quadratic
form associated with the Hamiltionian   ${\bf H}_{\alpha, Q}$ (see Proposition  \ref{lem_form_t_H_alpha,Q}).

In  Section \ref{sect.StarGr} we consider only  quantum \emph{star graphs}
with finitely many edges.  First, in Subsection \ref{sect.NegSpec}  assuming that \emph{the minimal
operator $A$ is non-negative}  and  employing the Weyl function technique we establish the equality
 $\kappa_{-}({\bf H}_{\alpha,Q})=\kappa_{-}(T)$  with a certain matrix  $T\in \mathbb{M}((p_2+1)m)$
(see Theorem \ref{th.kappaminus}), clarifying  and  completing  estimates  \eqref{Main_Barg_type_est_for_H_alpha,Q-intro}
and  \eqref{eq:barg_estim_if_AN>0-intro}.
It follows that  $\kappa_-({\bf H}_{\alpha,Q})\leq (p_2+1)m$.
Moreover,  assuming in addition that  the star graph $\mathcal G$ has no finite edges $(p_2=0)$ we show that
   \begin{equation}\label{ksglo-intro}
\kappa_-({\bf H}_{\alpha,Q})=\kappa_- \left(\alpha(0)-\sum_{e\in \mathcal{E}_{\infty}}M_e(0)\right)\leq m.
  \end{equation}
In particular,  ${\bf H}_{\alpha,Q}\geq 0$ if and only if $\alpha(0)\geq\sum_{e\in \mathcal{E}}M_e(0),$
where $M_e(0)$ is a limit value  at zero  of the Weyl function of the Dirichlet operator in $L^2(e;\mathbb{C}^m)$ for
each \emph{infinite edge (lead)} $e\in \mathcal{E}_{\infty}$.
For instance, we show that the Bargmann type estimates \eqref{Main_Barg_type_est_for_H_alpha,Q-intro}--\eqref{eq:barg_estim_if_AN>0-intro} are  not sharp.
Besides, assuming that  $Q=0$
we prove that $\kappa_-({\bf H}_{\alpha,0})=\kappa_-(T_1)\leq\sum_{k=0}^{p_2}\kappa_-(\alpha(v_k))$ (see \eqref{matrt-1}--\eqref{kapQz}).

Finally, in  Subsection \ref{sect.ScatMatr}  we compute the scattering matrix $\{S({\bf H}_{\alpha},{\bf H}_{D};\lambda)\}_{\lambda\in\mathbb{R}_+}$ for the pair
$\{{\bf H}_{\alpha},{\bf H}_{D}\}$, where ${\bf H}_{D}:={\bf H}_{D,Q}$ is the  Dirichlet  realization
on the star graph $\mathcal{G}$.
Namely, we show that with respect to  the spectral representation $L^2(\bR_+,d\gl;\mathcal{H}_{ac})$ the scattering matrix $\{S({\bf H}_\ga, {\bf H}_D;\gl)\}_{\gl\in\bR_+}$   admits the representation for a.e. $\lambda\in\mathbb{R}_+$:
\begin{equation}\label{scatmatr-intro}
\begin{split}
&S({\bf H}_\ga,{\bf H}_D;\gl)\\
&=I_{\mathcal H_{ac}}+\frac{i}{2\sqrt{\lambda}}
(N_1(\gl)^*)^{-1}\left((\ga(0)-K(\gl))^{-1}\otimes E_{p_1}\right)\cdot N_1(\gl)^{-1}.
\end{split}
\end{equation}
Here $K(\lambda)=\sum_{j=1}^{p_1}M_j(\lambda+i0)$ and  $M_j(\lambda+i0)$ is the limit value of the Weyl
function $M_j(\cdot)$,  corresponding to  the Dirichlet operator in $L^2(e_j; \mathbb{C}^{m})$ for
\emph{the  lead} $e_j\in \mathcal{E}_{\infty}$,
   $E_{p_1}\in \mathbb{C}^{p_1\times p_1}$ is the matrix consisting of $p_1^2$ units  (see \eqref{EpEp-1}),
and $N_1(\lambda)$ is a certain matrix function  expressed by means of the  potential matrix $Q$.
The proof  substantially relies on the results of \cite{BerMalNei08, BerMalNei17}
(see  Theorem \ref{th.ScS} for details).

The main results of the paper have been announced in \cite{GraMalNei19}.

It happens that we have completed the paper without our dear friend,
colleague/coauthor, and deep excellent  mathematician   Hagen Neidhardt with whom one of us
collaborated a lot in spectral and scattering theory.

{\bf Notation.}\
Through the paper $\mathfrak{H}$ and $\mathcal{H}$ denote separable Hilbert spaces;  $\mathcal{B}(\mathfrak{H})$ and $\mathcal{C}(\mathfrak{H})$ denote the spaces of
bounded and closed operators in $\mathfrak{H}$, respectively.
As usual  $\sigma_{ac}(T)$, $\sigma_{sc}(T)$, $\sigma_{p}(T)$, and $\sigma_{d}(T)$
denote the absolutely continuous, singular continuous, point and discrete spectra of an operator $T=T^*\in\mathcal{C}(\mathfrak{H})$;
 $\sigma_{p}(T)$ is the closure of $\sigma_{pp}(T)$, the set of eigenvalues of $T$.
$E_T(\cdot)$ is the spectral measure of $T$, and $\kappa_-(T):=\dim E_T(-\infty,0)$;
$T^{ac}$ is the absolutely continuous part of $T$;   $N_T(\cdot)$ is the  multiplicity function of $T$;

$\mathbb{M}(n):=\mathbb{C}^{n\times n}$ is the set of $n\times n$-matrices  with complex entries;
$\mathbb{N}_0:= \mathbb{N}\cup \{0\}$;
$\bR_+ := (0,\infty)$.

\section{Preliminaries}\label{sect.Prelim}

\subsection{Boundary triplets and Weyl functions}

Let us recall some basic facts  of the theory of abstract boundary
triplets and the corresponding  Weyl functions, cf. \cite{DerMal91, DerMal95, GorGor91}.

The set $\widetilde\cC(\cH)$ of closed linear relations in $\cH$ is the
set of closed linear subspaces of $\cH\oplus\cH$.
Recall that $\dom(\Theta) =\bigl\{ f:\{f,f'\}\in\Theta\bigr\} $, $\ran(\Theta) =\bigl\{
f^\prime:\{f,f'\}\in\Theta\bigr\} $, and $\mul(\Theta) =\bigl\{
f^\prime:\{0,f'\}\in\Theta\bigr\} $ are the domain, the range, and the multivalued part
of $\Theta$. A closed linear operator $A$ in $\cH$ is identified with its graph
$\gr(A)$, so that the set  $\cC(\cH)$  of closed linear operators in $\cH$ is viewed as
a subset of $\widetilde\cC(\cH)$. In particular, a linear relation $\Theta$ is an
operator if and
only if
$\mul(\Theta)$ is trivial.
We recall that the adjoint relation $\Theta^*\in\widetilde\cC(\cH)$ of $\Theta\in
\widetilde\cC(\cH)$ is defined by
\begin{equation*}
\Theta^*= \left\{
\begin{pmatrix} h\\h^\prime
\end{pmatrix}: (f^\prime,h)_{\cH}=(f,h^\prime)_{\cH}\,\,\text{for all}\,
\begin{pmatrix} f\\f^\prime\end{pmatrix}
\in\Theta\right\}.
\end{equation*}
A linear relation $\Theta$ is said to be {\it symmetric} if $\Theta\subset\Theta^*$ and
self-adjoint if $\Theta=\Theta^*$.

For a symmetric linear relation $\Theta\subseteq\Theta^*$ in $\cH$ the multivalued part
$\mul(\Theta)$ is the orthogonal complement of $\dom(\Theta)$ in $\cH$. Therefore
setting $\cH_{\rm op}:=\overline{\dom(\Theta)}$ and $\cH_\infty=\mul(\Theta)$, one
arrives at the orthogonal decomposition  $\Theta= \Theta_{\rm op}\oplus \Theta_\infty$
where  $\Theta_{\rm op}$ is a  symmetric operator in $\cH_{\rm op}$, the operator part
of $\Theta,$ and
$\Theta_\infty=\bigl\{\bigl(\begin{smallmatrix} 0 \\ f'
\end{smallmatrix}\bigr):f'\in\mul(\Theta)\bigr\}$,  a ''pure'' linear relation
 in $\cH_\infty$. 

Let $A$ be a densely defined closed symmetric operator in a separable Hilbert space
$\gH$, $\cN_z:=\ker(A^*-z)$ its  defect subspaces, and let
$\mathrm{n}_\pm(A)=\dim(\cN_{\pm \I}) \leq \infty$ be its deficiency indices.
%
   \begin{definition}[\cite{GorGor91}]\label{def_ordinary_bt}
A triplet $\Pi=\{\cH,\gG_0,\gG_1\}$ is called a {\rm boundary triplet} for
the adjoint operator $A^*$ if $\cH$ is an auxiliary Hilbert space and
$\Gamma_0,\Gamma_1:\  \dom(A^*)\rightarrow \cH$ are linear mappings such that the
abstract Green identity
\begin{equation}\label{II.1.2_green_f}
(A^*f,g)_\gH - (f,A^*g)_\gH = (\gG_1f,\gG_0g)_\cH - (\gG_0f,\gG_1g)_\cH, \quad
f,g\in\dom(A^*),
\end{equation}
holds and the mapping $\gG:=\begin{pmatrix}\Gamma_0\\\Gamma_1\end{pmatrix}:  \dom(A^*)
\rightarrow \cH \oplus \cH$ is surjective.
\end{definition}
Note that a boundary triplet for $A^*$ exists whenever  $\mathrm{n}_+(A)=
\mathrm{n}_-(A)$. Moreover, $\mathrm{n}_\pm(A) = \dim(\cH)$ and $\ker(\Gamma) =
\ker(\Gamma_0) \cap \ker(\Gamma_1)= \dom(A)$. Note also that $\Gamma$ is a bounded
mapping from $\gotH_+ = \dom(A^*)$ equipped with the graph norm of $A^*$ to
$\cH\oplus\cH.$  A boundary triplet for $A^*$ is not unique.

  \begin{definition}
\item[\;\;\rm (i)]  A closed extension $A'$ of $A$ is called a \emph{proper
extension}, if $A\subset A' \subset A^*$.  The set of all proper extensions of  $A$
completed by the (non-proper) extensions $A$ and $A^*$ is  denoted  by $\Ext_A$.

\item[\;\;\rm(ii)]  Two  extensions $A', A''\in \Ext_A$  are called disjoint
if $\dom( A')\cap \dom( A'') = \dom( A)$ and  transversal if in addition $\dom( A') +
\dom( A'') = \dom( A^*).$
   \end{definition}

Any self-adjoint extension $\wt A$ of $A$ is proper, i.e. $\wt A\in \Ext_A$.
Fixing a boundary triplet $\Pi$ one can parameterize the set $\Ext_A$ in the following
way.
\begin{proposition}[\cite{DerMal95}]\label{prop_II.1.2_01}
Let $A$ be as above and let $\Pi=\{\cH,\gG_0,\gG_1\}$ be a boundary triplet for $A^*$.
Then the mapping
     \begin{equation}\label{II.1.2_01A}
\Ext_A\ni \widetilde A \to  \Gamma \dom(\widetilde A) =\{\{\Gamma_0 f,\Gamma_1f \} : \
f\in \dom(\widetilde A) \} =: \Theta \in \widetilde\cC(\cH)
     \end{equation}
establishes  a bijective correspondence between the sets $\Ext_A$ and
$\widetilde\cC(\cH)$. We put $A_\Theta :=\widetilde A$ where $\Theta$ is defined by
\eqref{II.1.2_01A}, i.e.  $A_\Theta:= A^*\upharpoonright
\Gamma^{-1}\Theta=A^*\upharpoonright \bigl\{f\in\dom(A^*): \ \{\Gamma_0f,\Gamma_1f\}
\in\Theta\bigr\}.$   Moreover:

\item[\;\;\rm(i)] $A_\Theta$ is symmetric (self-adjoint) if and only if $\Theta$ is symmetric (self-adjoint), i.e. $\Theta \subseteq \Theta^*$\ $(\Theta = \Theta^*)$.  Besides,  $\mathrm{n}_\pm(A_\Theta)=\mathrm{n}_\pm(\Theta)$;

\item[\;\;\rm(ii)]  The extensions $A_\Theta$ and $A_0$ are disjoint
(transversal) if and only if $\Theta$  is the graph of an operator $B\in \mathcal C(\cH)$.
In this case  $A_B := A_\Theta$ is given by
  \begin{equation}\label{II.1.2_01AB}
\widetilde A =  A_B = A^*\!\upharpoonright\ker(\gG_1 - B\gG_0).
  \end{equation}
Moreover, the extensions $A_B$ and $A_0$ are transversal if and only if $B \in
\mathcal{B}(\cH)$.
      \end{proposition}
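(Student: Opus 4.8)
The statement to prove is Proposition \ref{prop_II.1.2_01}, the bijective parametrization of proper extensions via linear relations in a boundary triplet. Let me sketch how I would prove it.

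\textbf{The approach.} The proof rests on the abstract Green identity \eqref{II.1.2_green_f} together with surjectivity of $\Gamma=(\Gamma_0,\Gamma_1)^\top$ and the fact that $\ker\Gamma=\dom(A)$. The plan is to establish the bijection first, then verify (i) and (ii) by translating adjointness and disjointness/transversality statements through $\Gamma$.

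\textbf{Bijectivity.} First I would note that any $\widetilde A\in\Ext_A$ satisfies $\dom(A)\subseteq\dom(\widetilde A)\subseteq\dom(A^*)$, so $\Theta:=\Gamma\dom(\widetilde A)$ is a well-defined linear subspace of $\cH\oplus\cH$. Since $\Gamma$ is bounded from $\gotH_+=\dom(A^*)$ (graph norm) to $\cH\oplus\cH$ and $\widetilde A$ is closed, i.e. $\dom(\widetilde A)$ is closed in $\gotH_+$, and since $\ker\Gamma=\dom(A)\subseteq\dom(\widetilde A)$, the image $\Theta=\Gamma\dom(\widetilde A)$ is closed; hence $\Theta\in\widetilde\cC(\cH)$. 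Conversely, given $\Theta\in\widetilde\cC(\cH)$, set $\dom(\widetilde A):=\Gamma^{-1}\Theta=\{f\in\dom(A^*):\{\Gamma_0f,\Gamma_1f\}\in\Theta\}$ and $\widetilde A:=A^*\upharpoonright\dom(\widetilde A)$. Because $\ker\Gamma=\dom(A)\subseteq\Gamma^{-1}\Theta$, we get $A\subseteq\widetilde A\subseteq A^*$, and closedness of $\Theta$ plus boundedness of $\Gamma$ on $\gotH_+$ gives that $\dom(\widetilde A)$ is closed in $\gotH_+$, i.e. $\widetilde A\in\Ext_A$. The two assignments are mutually inverse: $\Gamma^{-1}(\Gamma\dom(\widetilde A))=\dom(\widetilde A)$ holds since $\dom(\widetilde A)\supseteq\ker\Gamma$, and $\Gamma(\Gamma^{-1}\Theta)=\Theta$ holds since $\Gamma$ is surjective. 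This gives the claimed bijection.

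\textbf{Part (i).} The key computation: for $f\in\dom(A_\Theta)$ and $g\in\dom(A_\Theta)$, Green's identity reads $(A^*f,g)-(f,A^*g)=(\Gamma_1f,\Gamma_0g)-(\Gamma_0f,\Gamma_1g)$; the left side vanishes for all such $f,g$ precisely when $A_\Theta$ is symmetric, and the right side, written with $\{f_0,f_1\}=\{\Gamma_0f,\Gamma_1f\}\in\Theta$ and similarly for $g$, is exactly the defect form whose vanishing on $\Theta\times\Theta$ means $\Theta\subseteq\Theta^*$. To upgrade symmetric to self-adjoint one checks $\dom(A_\Theta^*)=\Gamma^{-1}(\Theta^*)$: an element $g\in\dom(A^*)$ lies in $\dom(A_\Theta^*)$ iff $(A^*f,g)=(f,A^*g)$ for all $f\in\dom(A_\Theta)$, which by Green's identity and surjectivity of $\Gamma$ restricted appropriately is equivalent to $\{\Gamma_0g,\Gamma_1g\}\in\Theta^*$. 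Hence $A_\Theta^*=A_{\Theta^*}$, and $A_\Theta=A_\Theta^*\iff\Theta=\Theta^*$. For the deficiency index claim $\mathrm n_\pm(A_\Theta)=\mathrm n_\pm(\Theta)$, I would use that $A_\Theta^*=A_{\Theta^*}$ together with the orthogonal decomposition $\cN_{\pm i}(A_\Theta)\cong$ a corresponding defect space of $\Theta$ via $\Gamma$, or cite the standard abstract fact; this is the only place requiring a slightly finer argument, so I would phrase it carefully or refer to \cite{DerMal95}.

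\textbf{Part (ii).} The extensions $A_\Theta$ and $A_0=A^*\upharpoonright\ker\Gamma_0$ are disjoint iff $\dom(A_\Theta)\cap\dom(A_0)=\dom(A)$. Applying $\Gamma$ (and using $\ker\Gamma\subseteq$ both domains) this translates to $\Theta\cap(\{0\}\oplus\cH)=\{0\}$, i.e. $\mul(\Theta)=\{0\}$, i.e. $\Theta$ is the graph of an operator $B$; then $\{\Gamma_0f,\Gamma_1f\}\in\Theta$ means $\Gamma_1f=B\Gamma_0f$, giving \eqref{II.1.2_01AB}. Transversality $\dom(A_\Theta)+\dom(A_0)=\dom(A^*)$ translates via surjectivity of $\Gamma$ to $\Theta+(\{0\}\oplus\cH)=\cH\oplus\cH$, i.e. $\dom(\Theta)=\cH$; combined with $\mul(\Theta)=\{0\}$ and closedness of $\Theta$ this means $B$ is closed with $\dom(B)=\cH$, hence $B\in\mathcal B(\cH)$ by the closed graph theorem.

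\textbf{Main obstacle.} The routine parts are the algebra of Green's identity; the one step needing care is the identity $A_\Theta^*=A_{\Theta^*}$ (equivalently, the deficiency-index equality), since it requires knowing that every $g$ satisfying the adjoint relation actually has boundary values in $\Theta^*$ — this uses that $\Gamma$ maps $\dom(A^*)$ \emph{onto} $\cH\oplus\cH$ in an essential way, not merely that it has dense range. I would isolate this as the crux and handle it via the surjectivity of $\Gamma$ and the explicit description $\Theta^*=\{\{h,h'\}:(f',h)=(f,h')\ \forall\{f,f'\}\in\Theta\}$.
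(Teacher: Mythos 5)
The paper does not prove this proposition; it is imported verbatim from \cite{DerMal95}, so there is no in-text argument to compare against. Your sketch reproduces the standard proof from that reference correctly: the bijection via the induced isomorphism $\dom(A^*)/\dom(A)\cong\cH\oplus\cH$, the identity $A_\Theta^*=A_{\Theta^*}$ obtained from the Green identity together with surjectivity of $\Gamma$, and the translation of disjointness/transversality into $\mul(\Theta)=\{0\}$ and $\dom(\Theta)=\cH$ are exactly the right ingredients, and you correctly flag $A_\Theta^*=A_{\Theta^*}$ as the crux. Two small points deserve one more line each if you write this out in full: (a) the closedness of $\Gamma\dom(\widetilde A)$ is not automatic from boundedness of $\Gamma$ alone — you need that the induced map $\dom(A^*)/\dom(A)\to\cH\oplus\cH$ is a \emph{topological} isomorphism, which follows from surjectivity via the open mapping theorem; and (b) the equality $\Gamma(\dom(A_\Theta)\cap\dom(A_0))=\Theta\cap(\{0\}\oplus\cH)$ used in (ii) relies on both domains containing $\ker\Gamma$, which is worth stating since the image of an intersection is in general only contained in the intersection of the images. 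The deficiency-index equality $\mathrm n_\pm(A_\Theta)=\mathrm n_\pm(\Theta)$ is the one genuinely nontrivial leftover; deferring it to \cite{DerMal95} is reasonable, as the paper itself does.
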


The linear relation $\gT$ (the operator $B$) in the correspondence \eqref{II.1.2_01A}
(resp.  \eqref{II.1.2_01AB}) is  called \emph{the boundary relation (the boundary
operator)}.
We emphasize that for differential operators
parametrization \eqref{II.1.2_01A}--\eqref{II.1.2_01AB}  leads to a description of
the set of proper extensions directly in terms of boundary conditions.

It follows immediately from Proposition \ref{prop_II.1.2_01}(i) that the extensions
   \begin{equation}\label{II.Ext-s_A_0_and_A_1}
A_0:=A^*\!\upharpoonright\ker(\gG_0)\quad \text{and}\quad
A_1:=A^*\!\upharpoonright\ker(\gG_1)
   \end{equation}
are self-adjoint. Indeed,  $A_j=A_{\Theta_j}, \ j\in \{0,1\},$ where the subspaces
$\gT_0:= \{0\} \times \cH$ and $\gT_1 := \cH \times \{0\}$ are self-adjoint relations in
$\cH$.

Moreover, for any self-adjoint extension
$\wt A := \wt A^* \in \Ext_A$  there exists a boundary triplet $\Pi=\{\cH,\gG_0,\gG_1\}$
for $A^*$ such that $\ker(\Gamma_0) = \dom(\wt A)$.

Next following  \cite{DerMal91, DerMal95}  we introduce the concept of abstract Weyl
function. Emphasize that its role  in the extension theory is similar to that of the
classical Weyl--Titchmarsh $m$-function in the spectral theory of singular
Sturm-Liouville operators.

\begin{definition}[{\cite{DerMal91}}]\label{def_Weylfunc}

Let $A$ be a densely defined closed symmetric operator in $\gH$ with
equal deficiency  indices  and let $\Pi=\{\cH,\gG_0,\gG_1\}$ be a boundary triplet for $A^*$. The operator
valued functions $\gamma(\cdot) :\rho(A_0)\rightarrow  \mathcal{B}(\cH,\gH)$ and
$M(\cdot):\rho(A_0)\rightarrow  \mathcal{B}(\cH)$ defined by
  \begin{equation}\label{II.1.3_01}
\gamma(z):=\bigl(\Gamma_0\!\upharpoonright\cN_z\bigr)^{-1} \qquad\text{and}\qquad
M(z):=\Gamma_1\gamma(z), \qquad z\in\rho(A_0),
  \end{equation}
are called the {\em $\gamma$-field} and the {\em Weyl function}, respectively,
corresponding to the boundary triplet $\Pi.$
\end{definition}
In the case of Schr{\"o}dinger  operator in domains (inner or outer) with compact
boundary the mappings $\Gamma_0$ and $\Gamma_1$ can be selected as regularized Dirichlet
and Neumann traces (see \cite{Grubb68} and \cite{Malamud2010}). In this case  the
corresponding Weyl function $M(\cdot)$ coincides with is a version  of the
Dirichlet-to-Neumann map depending on $z$ see \cite{Malamud2010}. For Sturm-Liouville
operator this fact is much simpler and is immediate from Lemma \ref{lemma 3.15}. For
quantum graph it is more cumbersome although it is  valid (see the proof of Theorem
\ref{sc_spec}).

The $\gamma$-field $\gamma(\cdot)$ and the Weyl function $M(\cdot)$ in \eqref{II.1.3_01}
are well defined and holomorphic on $\rho(A_0)$.
The Weyl function $M(\cdot)$ is $\mathcal{B}(\cH)$-valued {\it Nevanlinna function},
($M(\cdot) \in  R[\cH]$), i.e. it  is holomorphic function on $\C\setminus \R$
satisfying
\begin{equation}\label{II.1.3_03}
 \im z\cdot\im M(z)\geq 0,\qquad  M(z)^*=M(\overline
z),\qquad  z\in \C\setminus \R.
\end{equation}
Moreover,    $M(\cdot)\in R^u[\cH],$ i.e.
it satisfies $0\in \rho(\im M(i)).$  \\

It is well known  that each $R[\cH]$-function, in particular, the Weyl function $M(\cdot)$,
admits an integral representation (see, for instance,  \cite{AkhGlz81}, \cite{Bro06})
   \begin{equation}\label{WF_intrepr}
M(z)=C_0+\int_{\R}\left(\frac{1}{t-z}-\frac{t}{1+t^2}\right)d\Sigma_M(t),\quad  \int_\R
{\frac{d\Sigma_M(t)}{1+t^2}} \in \mathcal{B}(\cH)
  \end{equation}
for $z\in\rho(A_0)$, where $\Sigma_M(\cdot)$ is an operator-valued Borel measure on $\R$
and $C_0 = C_0^*\in \mathcal{B}(\cH)$. The integral in
(\ref{WF_intrepr}) is understood in the strong sense.
A linear term $C_1z$ is missing in \eqref{WF_intrepr} because   $A$  is densely
defined (see \cite{DerMal91}).

Recall that a symmetric operator $A$ in $\gotH$ is said to be {\it
simple} if it does not admit a non-trivial orthogonal decomposition
$A=A'\oplus S$ where $A'$ is a symmetric operator and $S=S^*$.
It is  well-known  that $A$ is simple if and only if  the closed linear span of $\{\mathfrak{N}_z(A): z\in\C\setminus\R \}$
coincides with $\gotH$.

If $A$ is  simple, then  the Weyl function $M(\cdot)$ determines the boundary triplet
$\Pi$ uniquely up to the unitary equivalence  (see \cite{DerMal91}). In particular,
$M(\cdot)$ contains the full information about the spectral properties of $A_0$.
Moreover, the spectrum  of any  proper  (not necessarily self-adjoint) extension
$A_\Theta\in \Ext_A$ can be described  by means of $M(\cdot)$ and  the   boundary
relation $\Theta$.
   \begin{proposition}[{\cite[Theorem 2.2]{DerMal91}}]\label{prop_II.1.4_spectrum}
Let $\Pi=\{\cH,\gG_0,\gG_1\}$ be a boundary triplet for $A^*$ and let  $M(\cdot)$  and
$\gamma(\cdot)$ be the corresponding Weyl function and the  $\gamma$-field. Let also $\widetilde A = A_\Theta \in \Ext_A$
and $\rho(A_\Theta)\not = \emptyset$. Then:

\item[\;\;\rm (i)]  The  following Krein type formula holds
   \begin{equation}\label{II.1.4_01}
(A_\Theta - z)^{-1} - (A_0 - z)^{-1} = \gamma(z) (\Theta - M(z))^{-1}\gamma^*({\overline z}), \quad z\in \rho(A_0)\cap \rho(A_\Theta).
  \end{equation}
\item[\;\;\rm (ii)]   If $A$  is simple, then for any  $z\in \rho(A_0)$  the following
equivalence holds
\begin{displaymath}
z\in\sigma_j(A_\Theta) \quad \Longleftrightarrow\quad 0\in
\sigma_j(\Theta-M(z)),\qquad j\in\{\rm pp, c, r \}.
\end{displaymath}
\end{proposition}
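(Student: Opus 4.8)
The plan is to prove the Krein-type resolvent formula (i) first, and then deduce the spectral equivalence (ii) as a purely algebraic consequence. For part (i), the starting point is the observation that for $z\in\rho(A_0)$ and any $h\in\cN_z=\ker(A^*-z)$ one has $\gamma(z)\Gamma_0 h = h$, so $\Gamma_0\gamma(z)=I_\cH$ and $\Gamma_1\gamma(z)=M(z)$ by definition \eqref{II.1.3_01}. Now fix $z\in\rho(A_0)\cap\rho(A_\Theta)$ and $g\in\gH$, and set $f:=(A_\Theta-z)^{-1}g$ and $f_0:=(A_0-z)^{-1}g$. Then $A^*(f-f_0)=z(f-f_0)$, so $f-f_0\in\cN_z$ and hence $f-f_0=\gamma(z)\Gamma_0 f$ (using $\Gamma_0 f_0=0$). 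It remains to identify $\Gamma_0 f$. Since $f\in\dom(A_\Theta)$ we have $\{\Gamma_0 f,\Gamma_1 f\}\in\Theta$, while $\Gamma_1 f = \Gamma_1 f_0+\Gamma_1\gamma(z)\Gamma_0 f = \Gamma_1 f_0 + M(z)\Gamma_0 f$. Thus $\{\Gamma_0 f,\ \Gamma_1 f_0 + M(z)\Gamma_0 f\}\in\Theta$, i.e. $\{\Gamma_0 f,\ \Gamma_1 f_0\}\in\Theta-M(z)$, which gives $\Gamma_0 f=(\Theta-M(z))^{-1}\Gamma_1 f_0$ provided $(\Theta-M(z))^{-1}$ is a well-defined (bounded, everywhere-defined) operator — this invertibility is exactly equivalent to $z\in\rho(A_\Theta)$, and is the main technical point to nail down carefully (one shows $0\in\rho(\Theta-M(z))$ iff $z\in\rho(A_0)\cap\rho(A_\Theta)$ using that $A_\Theta$ is a proper extension with nonempty resolvent set). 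Finally, the adjoint relation $\gamma(z)^* = \Gamma_1(A_0-\bar z)^{-1}$, which follows from the Green identity \eqref{II.1.2_green_f} by a short computation, yields $\Gamma_1 f_0 = \gamma^*(\bar z)g$, and assembling the pieces gives
\begin{equation*}
(A_\Theta-z)^{-1}g-(A_0-z)^{-1}g = \gamma(z)(\Theta-M(z))^{-1}\gamma^*(\bar z)g,
\end{equation*}
which is \eqref{II.1.4_01}.

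For part (ii), assume $A$ is simple and fix $z\in\rho(A_0)$. The key structural fact is that $\gamma(z):\cH\to\cN_z$ is a bounded bijection with bounded inverse $\Gamma_0\!\upharpoonright\cN_z$, and $\gamma^*(\bar z)$ likewise has trivial kernel (again because $A$ is densely defined and by the analogous identity for $\cN_{\bar z}$). Because of this, \eqref{II.1.4_01} shows that the "difference of resolvents" is, up to the fixed isomorphisms $\gamma(z)$ and $\gamma^*(\bar z)$, the operator $(\Theta-M(z))^{-1}$. The strategy is then: if $0\in\rho(\Theta-M(z))$ then the right-hand side of \eqref{II.1.4_01} is bounded and everywhere defined, so $z\in\rho(A_\Theta)$, i.e. $0\notin\sigma_j(\Theta-M(z))$ for all $j$ forces $z\notin\sigma(A_\Theta)$; conversely, if $z\in\rho(A_\Theta)$ the argument of part (i) already produced $0\in\rho(\Theta-M(z))$. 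This settles the equivalence at the level of full spectrum versus resolvent set. To refine it to each spectral component $j\in\{\mathrm{pp},\mathrm{c},\mathrm{r}\}$, one checks that the isomorphism $\gamma(z)$ maps the relevant subspaces bijectively: a vector $h$ with $(\Theta-M(z))h=0$ corresponds to $\gamma(z)h\in\ker(A_\Theta-z)$, giving the point-spectrum equivalence; density of $\ran(\Theta-M(z))$ transfers to density of $\ran(A_\Theta-z)$ via the bijection $\gamma^*(\bar z)$ on closures, giving the continuous/residual dichotomy; here one uses that $\gamma(z)$ has closed range $\cN_z$ and bounded inverse, so it preserves closed-ness and density of ranges. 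The simplicity hypothesis enters precisely to guarantee that no "spurious" spectrum of $A_\Theta$ is hidden away from $\cN_z$ — equivalently, that $\sigma(A_0)$ contributes nothing extra on $\rho(A_0)$, which is automatic, but simplicity ensures $M(\cdot)$ genuinely encodes everything.

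The main obstacle I anticipate is the careful bookkeeping around the invertibility claim "$z\in\rho(A_\Theta) \Longleftrightarrow 0\in\rho(\Theta-M(z))$" when $\Theta$ is a genuine linear relation rather than an operator: one must interpret $(\Theta-M(z))^{-1}$ as the inverse linear relation and verify it is single-valued and bounded, using that $M(z)\in\mathcal{B}(\cH)$ shifts $\Theta$ by a bounded operator so $\Theta-M(z)$ is still closed with the same multivalued part, and that the surjectivity of $\Gamma=(\Gamma_0,\Gamma_1)^\top$ onto $\cH\oplus\cH$ is what lets one solve the boundary equation. Once this correspondence is established rigorously, the rest is the routine transport of spectral properties along the bounded bijections $\gamma(z)$ and $\gamma^*(\bar z)$, and the proof closes. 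I would cite \cite{DerMal91,DerMal95} for the standard pieces (the identity $\gamma^*(\bar z)=\Gamma_1(A_0-\bar z)^{-1}$ and the equivalence of invertibility) and present only the assembly in detail.
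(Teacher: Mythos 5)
The paper does not prove this proposition: it is quoted verbatim from \cite[Theorem 2.2]{DerMal91}, so there is no in-paper argument to compare yours against. Your plan reproduces the standard derivation from that source — part (i) by observing that $(A_\Theta-z)^{-1}g-(A_0-z)^{-1}g\in\cN_z$, reading off its boundary values, and using $\gamma(\bar z)^*=\Gamma_1(A_0-z)^{-1}$; part (ii) by transporting kernels and ranges along $\gamma(z)$ and $\gamma^*(\bar z)$ — and you correctly isolate the two delicate points, namely the equivalence $z\in\rho(A_\Theta)\Leftrightarrow 0\in\rho(\Theta-M(z))$ for $z\in\rho(A_0)$ and the relation-theoretic meaning of $(\Theta-M(z))^{-1}$. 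As a plan this is sound and is essentially the intended proof.

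One concrete error to fix: $\gamma^*(\bar z)=\gamma(\bar z)^*$ does \emph{not} have trivial kernel; its kernel is $(\ran\gamma(\bar z))^\perp=\gH\ominus\cN_{\bar z}$, which is a huge subspace. What you actually need (and what is true) is the dual statement: $\gamma(\bar z)$ is a topological isomorphism of $\cH$ onto the \emph{closed} subspace $\cN_{\bar z}$, hence $\ran\gamma^*(\bar z)=\cH$. It is this surjectivity — not injectivity — that lets you solve the boundary equation $\{h,\Gamma_1 f_0\}\in\Theta-M(z)$ for arbitrary $g\in\gH$ and that transfers closedness and density of ranges in part (ii). Your discussion of where simplicity enters is also circular as written ("automatic, but simplicity ensures $M$ encodes everything"); the clean statement is that for a non-simple $A=A'\oplus S$ with $S=S^*$ one has $\sigma(S)\subseteq\sigma(A_0)$, so on $\rho(A_0)$ the summand $S$ contributes nothing and the correspondence is carried entirely by the simple part — this is the point to make explicit. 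With those two repairs the argument closes.
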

We complete formula \eqref{II.1.4_01}  for $\Theta = \ker \left(C\,\,\, D\right)$ with $C,D\in\mathcal{B}(\mathcal{H})$ (see \cite{MalMog02}, \cite{DerMal17}). Then
  \begin{equation}\label{themM-Prelim}
(\Theta - M(z))^{-1}=\left(C - DM(z)\right)^{-1}D.
  \end{equation}

Formula \eqref{II.1.4_01} generalizes the classical  Krein formula for canonical
resolvents (cf. \cite{Kre47}, \cite{DerMal17}). 
It establishes  a one-to-one
correspondence between the set of proper extensions $\wt A = A_\gT$ with
$\rho(A_\Theta)\not = \emptyset$, and the set of linear relations $\Theta$ in $\cH$.
Note also that all parameters  entered  in \eqref{II.1.4_01} are expressed in terms of
the boundary triplet $\Pi$ (see formulas \eqref{II.1.2_01AB} and \eqref{II.1.3_01}) (cf.
\cite{DerMal91,DerMal95}).

\begin{remark}[{\cite[Ch. VIII]{AkhGlz81}}, \cite{Rofe-Bek1969}]\label{A_CD}
 In the case of $\mathrm{n}_{\pm}(A)=m<\infty$, the set of all self-adjoint extensions of the operator $A$ is parameterized as follows:
\begin{equation}\label{C-D}
\begin{gathered}
\Ext_A\ni\widetilde{A}=\widetilde{A}^*=A_{C,D}=A^*\upharpoonright\ker(D\Gamma_1-C\Gamma_0), \\
\text{where} \quad CD^*=DC^*,\quad \det(CC^*+DD^*)\neq 0, \quad C, D\in\mathbb{C}^{m\times m}.
\end{gathered}
\end{equation}
\end{remark}

In the following proposition describing self-adjoint extensions
with  finite negative spectrum we restrict ourselves to the case of finite deficiency indices.
   \begin{proposition}[{\cite{DerMal91, DerMal95}}]\label{A_0}
Let $A$ be  a densely defined non-negative symmetric operator  in $\gotH$, $\mathrm{n}_{\pm}(A)=m<\infty,$
let $\Pi=\{\cH,\gG_0,\gG_1\}$ be a boundary triplet for  $A^*$ such that
$A_0=\widehat{A}_F$   is the Friedrichs extension of $A$. Further, let
$M(\cdot)$ be the corresponding Weyl function and  let
$A_{C,D}$  be a selfadjoint extension  of $A$ given by~\eqref{C-D}.  Then
   \begin{equation}\label{kappa}
 \kappa_-(A_{C,D})=\kappa_-(CD^*-DM(0)D^*).
\end{equation}
In particular, $A_{C,D}\geq 0$ if and only if $CD^*-DM(0)D^*\geq 0.$
\end{proposition}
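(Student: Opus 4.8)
The plan is to reduce the statement to a count of negative squares for an auxiliary finite-dimensional form built from the Weyl function, using Krein's resolvent formula together with the non-negativity hypothesis. First I would recall that since $A\ge 0$ and $A_0 = \widehat A_F$ is the Friedrichs extension, the point $0$ is not an eigenvalue of $A_0$ and $(A_0 - z)^{-1}$ extends holomorphically (as a bounded, in fact non-negative, operator) across $(-\infty,0)$ up to and including $0$; moreover the Weyl function $M(\cdot)$, being a Nevanlinna function holomorphic on $\C\setminus[0,\infty)$ with the Friedrichs normalization, has a monotone limit $M(0):=\lim_{x\uparrow 0}M(x) = M(0)^*$ which may be understood as a (possibly bounded, by the finite-dimensionality $\dim\cH = m<\infty$) self-adjoint matrix. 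I would make precise here that finiteness of $\kappa_-(A_{C,D})$ forces this limit to exist as a genuine matrix rather than a form with infinite part, so $M(0)\in\mathbb{M}(m)$.

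Next I would invoke Proposition~\ref{prop_II.1.4_spectrum}(ii): for $x<0$, $x\in\sigma(A_{C,D})$ (equivalently $x$ is a negative eigenvalue) if and only if $0\in\sigma(\Theta - M(x))$, and since everything is finite-dimensional with $\Theta = \ker(D\ \ {-C})$ parametrizing $A_{C,D}$ via~\eqref{C-D}, formula~\eqref{themM-Prelim} gives that $\Theta - M(x)$ is invertible precisely when the matrix $CD^* - DM(x)D^*$ is invertible (after multiplying by the invertible block structure coming from $\det(CC^*+DD^*)\ne 0$; here the hypothesis $CD^* = DC^*$ is what makes $CD^* - DM(x)D^*$ self-adjoint). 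Thus the negative eigenvalues of $A_{C,D}$, counted with multiplicity, are exactly the points $x<0$ where the self-adjoint matrix-valued function $x\mapsto G(x):=CD^* - DM(x)D^*$ is singular, with multiplicity equal to $\dim\ker G(x)$. The core analytic input is then the monotonicity of $M$: from~\eqref{II.1.3_03} and the integral representation~\eqref{WF_intrepr} (with the measure $\Sigma_M$ supported on $[0,\infty)$ by the Friedrichs normalization), $M'(x) = \int_{[0,\infty)}(t-x)^{-2}\,d\Sigma_M(t)\ge 0$ on $(-\infty,0)$, hence $G(x) = CD^* - DM(x)D^*$ is non-increasing in $x$ there, and $G(x)\to CD^*$ (well, to $CD^* - D M(-\infty)D^*$; but with the standard normalization $M(-\infty) = 0$, or at worst a fixed non-positive matrix, and one checks $G(-\infty)\ge 0$) as $x\to -\infty$.

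With $G$ a non-increasing family of self-adjoint $m\times m$ matrices that is $\ge 0$ near $-\infty$ and equal to $CD^* - DM(0)D^*$ at $x=0$, I would finish by the standard eigenvalue-counting argument: each eigenvalue branch $\mu_k(x)$ of $G(x)$ is non-increasing, starts $\ge 0$, and the number of branches that are $<0$ at a given $x$ is a non-decreasing, left-continuous step function whose jumps occur exactly at crossings; summing the jumps over $(-\infty,0)$ (equivalently, applying the argument principle / a Morse-type index count to the analytic matrix function $G$, or invoking the abstract result that $\kappa_-(A_{C,D})$ equals the total multiplicity of eigenvalues of $G$ crossing zero from above on $(-\infty,0)$ plus $\dim\ker G(0_-)$ absorbed into $\kappa_-(G(0))$) yields $\kappa_-(A_{C,D}) = \kappa_-(G(0)) = \kappa_-(CD^* - DM(0)D^*)$. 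The ``in particular'' clause is then immediate: $A_{C,D}\ge 0$ iff $\kappa_- = 0$ iff $CD^* - DM(0)D^*\ge 0$.

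The main obstacle I anticipate is the careful justification that the crossings are ``counted correctly'' — i.e. that no negative eigenvalue of $A_{C,D}$ is lost or double-counted at $x\to 0^-$ and that the spectrum of $A_{C,D}$ below $0$ is purely discrete with finite total multiplicity. For the latter one uses that $A_0$ has no spectrum in $(-\infty,0)$ (Friedrichs, $A\ge 0$), so by the Krein formula~\eqref{II.1.4_01} the negative spectrum of $A_{C,D}$ is governed entirely by the finite-rank perturbation term $\gamma(z)(\Theta - M(z))^{-1}\gamma^*(\bar z)$, hence finite; for the former one must check continuity/monotonicity of $M$ up to the boundary point $0$ and that the limit $M(0)$ is the right object — this is exactly where the Friedrichs normalization $A_0 = \widehat A_F$ is essential (a different choice of $A_0$ would shift $M(0)$ and break the clean formula). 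I would handle this by citing the precise monotonicity and boundary-limit statements for Weyl functions of non-negative operators from \cite{DerMal91, DerMal95}, which is presumably the intended route.
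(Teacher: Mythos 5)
The paper does not prove this proposition at all: it is quoted from \cite{DerMal91, DerMal95} as a known result, so there is no in-text argument to compare with. Your overall strategy --- monotonicity of $M(\cdot)$ on $(-\infty,0)$ plus a count of eigenvalue branches crossing zero --- is indeed the standard route to this formula, but two of your load-bearing steps are wrong as stated. First, the behaviour of $M$ at $-\infty$: for a densely defined non-negative $A$ one does \emph{not} have $M(-\infty)=0$, nor convergence to a fixed matrix. The correct fact, which is precisely the Weyl-function characterization of the Friedrichs extension, is that $A_0=\widehat{A}_F$ if and only if $(M(x)h,h)\to-\infty$ as $x\to-\infty$ for every $h\ne 0$ (model case: $A=-d^2/dx^2$ on $\R_+$ with Dirichlet $A_0$ gives $M(x)=-\sqrt{|x|}$). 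This divergence is the anchor of the whole counting argument --- it is what makes $\Theta-M(x)$ ``positive'' for $x\ll 0$, so that no branch has crossed before the count begins --- and your hedge ``$M(-\infty)=0$, or at worst a fixed non-positive matrix'' both misses where the hypothesis $A_0=\widehat{A}_F$ actually enters and fails to deliver the needed base case.

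Second, the singularity criterion. By \eqref{themM-Prelim} and the Krein formula, a point $x<0$ lies in $\sigma_p(A_{C,D})$ iff $C-DM(x)$ is singular, \emph{not} iff $CD^*-DM(x)D^*$ is singular. These differ genuinely: $\ker(D^*)\subset\ker(CD^*-DM(x)D^*)$ for every $x$, so whenever $D$ is not invertible your matrix $G(x)$ is singular for all $x$ (take $D=0$, $C=I_m$: then $A_{C,D}=A_0\ge 0$ and $(-\infty,0)\subset\rho(A_0)$, yet $G\equiv 0$). Hence ``the negative eigenvalues of $A_{C,D}$ are exactly the points where $G(x)$ is singular'' is false, and the branch-crossing count applied to $G$ collapses. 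The repair is to run the crossing argument for the self-adjoint relation $\Theta=\{(D^*h,C^*h):h\in\cH\}$, proving $\dim E_{A_{C,D}}(-\infty,x)=\kappa_-(\Theta-M(x))$ for $x<0$, and only at the end to identify $\kappa_-(\Theta-M(0))$ with $\kappa_-(CD^*-DM(0)D^*)$ via the form $h\mapsto\langle (CD^*-DM(0)D^*)h,h\rangle$ on $\dom\Theta=\ran(D^*)$ (vectors in $\ker D^*$ contribute zero squares, so they do not change $\kappa_-$, though they do destroy invertibility). A smaller point: the monotone limit $M(0)=\lim_{x\uparrow 0}M(x)$ can be $+\infty$ in some directions even though $\kappa_-(A_{C,D})\le m$ is automatically finite, so finiteness of $\kappa_-(A_{C,D})$ does not force $M(0)$ to be a matrix as you claim; its finiteness is an implicit extra hypothesis (satisfied in all applications in this paper).
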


\subsection{Weyl function and spectrum}
In the following we are going to characterize the spectrum of the extension $A_0$ in terms of the Weyl function.
To this end let $\Phi(\cdot)$ be a scalar Nevanlinna function.
In what follows by $\lim_{z \to\!\succ x}\Phi(z)$ we mean
 that the limit $\lim_{r\downarrow 0}\Phi(x + r e^{i\theta})$, $x \in \R$, exist uniformly in $\theta \in [\varepsilon,\pi-\varepsilon]$
for each $\varepsilon \in (0,\pi/2)$. Let us introduce the following sets:
\begin{align}
\gO_s(\Phi)    &:= \{x \in {\bR}: |\Phi(z)| \to +\infty \; \mbox{as} \; z \to\!\succ x\},\label{3.26-9}\\
\gO_{pp}(\Phi) &:= \{x \in {\bR}: \lim_{z \to\!\succ x}(z-x)\Phi(z) \not= 0\},\label{3.26-10}\\
\gO_{sc}(\Phi) &:= \{x \in {\bR}: |\Phi(z)| \to +\infty \; \mbox{and} \;(z-x)\Phi(z) \to 0 \; \mbox{as} \; z \to\!\succ x\},\label{3.26-11}\\
\gO_{ac}(\Phi) &:= \{x \in {\bR}: 0 < \im\Phi(x+i0) <+\infty\}, \; \Phi(x+i0) = \lim_{y\downarrow 0}\Phi(x+iy). \nonumber
\end{align}
Any scalar R-function $\Phi(\cdot)$ admits the
representation
\begin{equation}\label{3.26-13}
\Phi(z) = C_0 + C_1z + \int_\R\left(\frac{1}{t-z} - \frac{t}{1 +
t^2}\right)d\mu(t), \quad z \in {\bC}_+,
\end{equation}
with constants  $C_0 \in {\bR}$, $C_1 \ge 0$, and the  scalar Borel measure $\mu(\cdot)$
satisfying  $\int_\R (1+ t^2)^{-1}{d\mu(t)} < \infty.$
   \begin{theorem}[{\cite[Theorem 4.3]{BraMalNei02}}]\label{IV.3}
Let $A$ be a simple  closed symmetric operator in $\gotH$ with $n_{\pm}(A) = n<\infty$.
Let $\Pi = \{\kH,\gG_0,\gG_1\}$ be a boundary triple of $A^*$, let  $M(\cdot)$  be  the
corresponding  Weyl function, $M_h(z) := (M(z)h,h)$,  $h \in \mathcal H$,  and let $\kT
= \{h_k\}^n_{k=1}$ be  a basis in $\kH$. Then:

\item[\;\;\rm(i)] the  extension $A_0 (=A^*\!\upharpoonright\ker(\gG_0))$ (see
\eqref{II.Ext-s_A_0_and_A_1})  has no point spectrum within the interval $(a,b)$, i.e.
$\gs_{pp}(A_0) \cap (a,b) = \emptyset$, if and only if
\begin{equation}\label{4.10}
\lim_{y \downarrow 0}yM_{h_k}(x + iy) = 0  \quad  \text{for each} \quad  k \in\{ 1,2,\ldots,n\},
\end{equation}
for all $x \in (a,b)$. In this case the following relation holds
\begin{eqnarray}\label{4.11}
\lefteqn{\hspace{-1.5cm} \gs(A_0) \cap (a,b) = \gs_c(A_0) \cap (a,b) = }\\
& & \left(\overline{\cup^n_{k=1}\gO_{sc}(M_{h_k})} \; \cup \;
\overline{\cup^n_{k=1}\gO_{ac}(M_{h_k})}\right) \cap (a,b).\nonumber
\end{eqnarray}

\item[\;\;\rm(ii)] The operator  $A_0$  has no singular
continuous spectrum within the interval $(a,b)$, i.e. $\gs_{sc}(A_0) \cap (a,b) =
\emptyset$, if for each $k \in \{1,2,\ldots,n\}$ the set $\gO_{sc}(M_{h_k}) \cap (a,b)$
is at most countable. In particular, it happen if $(a,b) \setminus \gO_{ac}(M_{h_k})$ is
at most countable.
\end{theorem}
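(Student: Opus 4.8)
The plan is to prove Theorem \ref{IV.3}, which characterizes the spectrum of $A_0$ via the scalar Weyl functions $M_{h_k}$. I would split the argument exactly as the statement does: part (i) relates absence of point spectrum in $(a,b)$ to the vanishing of $yM_{h_k}(x+iy)$, and part (ii) deduces absence of singular continuous spectrum from a smallness hypothesis on the sets $\gO_{sc}(M_{h_k})$.

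First I would recall the key spectral principle behind the whole argument: for a scalar Nevanlinna function $\Phi$ with representing measure $\mu$ as in \eqref{3.26-13}, the classical theory of boundary values of Cauchy–Stieltjes transforms tells us that $x \in \supp(\mu_{pp})$ iff $\lim_{y\downarrow 0} y\,\Phi(x+iy) \neq 0$ (the limit then equals $-\mu(\{x\})$ up to sign), that $x$ lies in the topological support of the absolutely continuous part iff $0 < \im \Phi(x+i0) < \infty$ on a set of positive measure near $x$, and that $\gO_{sc}(\Phi)$ carries the singular continuous part. So the sets $\gO_{pp}$, $\gO_{ac}$, $\gO_{sc}$ introduced in \eqref{3.26-9}–\eqref{3.26-11} are precisely the Lebesgue-decomposition pieces of the spectrum of the (scalar) operator with spectral function $\mu$. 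The second ingredient is that, since $A$ is simple with finite deficiency indices $n$, the operator $A_0$ is unitarily equivalent to the model operator living on the vector measure $\Sigma_M$ from \eqref{WF_intrepr}; more concretely, by the Stieltjes-type inversion formula $E_{A_0}((a,b)) = \text{s-}\lim \frac{1}{\pi}\int_a^b \im M(x+iy)\,dx$ holds when the endpoints are continuity points, so the spectral measure of $A_0$ is governed entirely by $\Sigma_M$, and testing against the basis vectors $h_k$ reduces everything to the scalar functions $M_{h_k}$. These two facts are essentially the content of \cite[Theorem 4.3]{BraMalNei02} that the statement cites, so I would invoke them and then do the bookkeeping.

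For part (i): the point spectrum of $A_0$ in $(a,b)$ is empty iff each scalar spectral measure $(E_{A_0}(\cdot)h_k,h_k)$ has no atoms in $(a,b)$, because $\{h_k\}$ is a basis and an atom of $E_{A_0}$ would be detected by at least one diagonal measure (here I would note that one must be slightly careful: an atom of the operator-valued measure $\Sigma_M$ need not show up in every $(\Sigma_M h_k, h_k)$, but it must show up in at least one, since $\Sigma_M(\{x\}) \neq 0$ forces $(\Sigma_M(\{x\})h_k,h_k) \neq 0$ for some $k$). Then, by the scalar atom criterion recalled above applied to $\Phi = M_{h_k}$, vanishing of all $\lim_{y\downarrow 0}yM_{h_k}(x+iy)$ at every $x\in(a,b)$ is equivalent to $x \notin \gO_{pp}(M_{h_k})$ for all $k$ and all $x$, i.e. to $\sigma_{pp}(A_0)\cap(a,b)=\emptyset$. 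Granted this, $A_0$ restricted to $(a,b)$ has only continuous spectrum, and the support of its continuous part is the closure of the union over $k$ of the supports of the continuous parts of $\mu_{h_k}$, which are exactly $\gO_{sc}(M_{h_k}) \cup \gO_{ac}(M_{h_k})$; taking closures and intersecting with $(a,b)$ gives \eqref{4.11}. For part (ii): if every $\gO_{sc}(M_{h_k})\cap(a,b)$ is at most countable, then the singular continuous part of each $\mu_{h_k}$ is supported on a countable set inside $(a,b)$, but a singular continuous measure has no atoms, so it must vanish on $(a,b)$; hence the singular continuous part of $\Sigma_M$ vanishes on $(a,b)$ and $\sigma_{sc}(A_0)\cap(a,b)=\emptyset$. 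The final "in particular" follows since $(a,b)\setminus\gO_{ac}(M_{h_k})$ countable forces $\gO_{sc}(M_{h_k})\cap(a,b)$, which is disjoint from $\gO_{ac}$, to be contained in that countable set.

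The main obstacle I expect is the careful passage between the operator-valued spectral measure of $A_0$ (equivalently $\Sigma_M$) and the scalar diagonal measures $\mu_{h_k} = (\Sigma_M h_k, h_k)$: one needs that a nontrivial singular (resp. atomic, resp. a.c.) part of $\Sigma_M$ near a point is "seen" by at least one $h_k$, and conversely that the union of the scalar supports recaptures the operator support. In finite dimensions $n = \dim\kH < \infty$ this is elementary linear algebra applied pointwise in $x$ together with mutual absolute continuity of $(\Sigma_M h, h)$ with respect to the trace measure $\tr \Sigma_M$, and the basis $\{h_k\}$ spans, so each of the three parts of the measure is controlled; still, this is the step that must be written with care, and it is where the hypothesis $n < \infty$ and "simple" are genuinely used (simplicity to make $A_0$ unitarily a model operator on $\Sigma_M$ with no extra absolutely continuous or singular background, finiteness to reduce to finitely many scalar functions). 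Everything else is the scalar boundary-value dictionary for Nevanlinna functions, which I would treat as standard and cite from \cite{AkhGlz81} or the references in \eqref{WF_intrepr}.
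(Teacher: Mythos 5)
The paper itself gives no proof of this statement: Theorem \ref{IV.3} is quoted verbatim from \cite[Theorem 4.3]{BraMalNei02} and used as an imported tool, so there is no internal argument to compare yours against. Judged on its own, your reconstruction follows the same route as the cited source and is sound in outline: (1) the scalar Herglotz dictionary (de la Vall\'ee Poussin/Fatou theory) identifying $\gO_{pp}$, $\gO_{ac}$, $\gO_{sc}$ as carriers of the atomic, absolutely continuous and singular continuous parts of the representing measure; (2) the model representation of $A_0$ over $\Sigma_M$, valid because $A$ is simple; (3) the reduction from the operator-valued measure to the diagonal scalar measures $(\Sigma_M(\cdot)h_k,h_k)$ via positivity and the fact that $\{h_k\}$ spans the finite-dimensional $\kH$. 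Your closing paragraph correctly isolates step (3) as the place where $n<\infty$ and simplicity are genuinely used, and the positivity argument you sketch (if $(\Sigma_M(\{x\})h_k,h_k)=0$ for all $k$ then $\Sigma_M(\{x\})^{1/2}$ vanishes on $\Span\{h_k\}=\kH$) is exactly right.

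Two points deserve tightening. First, the displayed inversion formula you invoke, $E_{A_0}((a,b))=\text{s-}\lim\frac1\pi\int_a^b\im M(x+iy)\,dx$, conflates the two Hilbert spaces: the left-hand side is a projection in $\gotH$, the right-hand side an operator in $\kH$. The correct statement is that this limit equals $\Sigma_M((a,b))$, and the bridge to $E_{A_0}$ goes through the $\gamma$-field (equivalently the unitary onto the model space $L^2(\R,\Sigma_M)$), which exists precisely because $A$ is simple; as written the formula is false, though the intended use is legitimate. Second, the residue identity is $\lim_{y\downarrow0}y\Phi(x+iy)=i\mu(\{x\})$, not $-\mu(\{x\})$ "up to sign"; harmless for the nonvanishing criterion \eqref{4.10}, but worth stating correctly, and note the theorem asks for nontangential limits $z\to\!\succ x$ rather than vertical ones. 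With these repairs your argument is a faithful, if condensed, version of the proof in \cite{BraMalNei02}.
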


\subsection{Weyl function and scattering matrix}
\subsubsection{Direct integral and spectral representation}
Let $\kH$ be a separable Hilbert space and let $\kT = \{\kH_\gl\}_{\gl \in \bR}$ be a family of closed subspaces
$\kH_\gl \subseteq \kH$.
Assume that the family of  orthogonal projections $P(\gl)$  from $\kH$ onto $\kH_\gl$  defines a
weakly  measurable family of projections,
\begin{displaymath}
(Pf)(\gl) := P(\gl)f(\gl), \qquad f \in L^2(\bR,d\gl;\kH).
\end{displaymath}
This family  defines the measurable family $\{\kH_\gl\}_{\gl \in \bR}$ of subspaces.
 We set
\begin{displaymath}
L^2(\bR,d\gl;\kH_\gl) := PL^2(\bR,d\gl;\kH)
\end{displaymath}
and note that  $P$ defines an orthogonal projection from $L^2(\bR,d\gl;\kH)$ onto $L^2(\bR,d\gl;\kH_{\lambda})$.
Recall that $L^2(\bR,d\gl;\kH_\gl)$ is called the direct integral associated with the measurable family
 of subspaces $\kT$ (see \cite[Ch. 7]{BirSol1987}).

In the direct integral $L^2(\bR,d\gl;\kH_\gl)$
one defines  the multiplication $\kM_\kT$,
\begin{equation*}
\begin{split}
&(\kM_\kT f)(\gl) = \gl f(\gl), \\
& \dom(\kM_\kT) := \{f \in L^2(\bR,d\gl;\kH_\gl): \gl f(\gl) \in L^2(\bR,d\gl;\kH)\}.
\end{split}
\end{equation*}
It is  well known  (see \cite[Chapter 7]{BirSol1987}) that  the operator $\kM_\kT$ is self-adjoint.

Let $T = T^*\in  \mathcal C(\gotH)$ be an operator with $ac$-spectrum.
Then the  direct integral $L^2(\bR,d\gl;\kH_\gl)$ is called a spectral representation of $T$
if there is an isometry $V$  from $\gotH$ onto $L^2(\bR,d\gl;\kH_\gl)$  such that $VTV^* = \kM_\kT.$
This implies that  for any operator  $G (\in  \mathcal{B}(\gotH))$ commuting   with $T$,
the operator
$\kM_\kT(G) := VGV^*$ commutes with $\kM_\kT$, hence is the multiplication operator:
\begin{equation}\label{eq.decomposable_oper_in_dir_integr}
(\kM_\kT(G)f)(\gl) := G_\kT(\gl)f(\gl), \quad f \in L^2(\bR,d\gl;\kH_\gl),
\end{equation}
Here $\{G_\kT(\gl)\}_{\gl \in \bR}$,  is a measurable family
of  operators, $G_\kT(\gl)\in \mathcal{B}(\kH_\gl).$

It is known \cite{BirSol1987}   that
the commutant $\{\kM_\kT\}'$ of the operator  $\kM_\kT$ consists of bounded operators
of the form  \eqref{eq.decomposable_oper_in_dir_integr} which are called decomposable.
The operator  $G$ in  $\gotH$ is unitary  if and only if
$G_\kT(\gl)$ is unitary  for a.e. $\gl \in \bR$.
\begin{remark}\label{rem:2.9}
\item[\;\;\rm(i)]
If $\kH_\gl = \kH$ for $\gl \in \bR$, then the direct integral $L^2(\bR,d\gl;\kH_\gl)$
becomes  $L^2(\bR,d\gl;\kH_\gl) = L^2(\bR,d\gl;\kH)$;

\item[\;\;\rm(ii)]
Let
$\kF_\kT := \{\gl \in \bR: \dim(\kH_\gl) > 0\}$. Then we set
$L^2(\kF,d\gl;\kH_\gl) := L^2(\bR,d\gl;\kH_\gl).$
If $\kH_\gl = \{0\}$ for $\gl \in (-\infty,0)$ and
$\kH_\gl = \kH$ for $\gl \in \bR_+$, then
$L^2(\bR,d\gl;\kH_\gl) = L^2(\bR_+,d\gl,\kH).$
\end{remark}

\subsubsection{Scattering matrix}
As above, let  $A$ be a densely defined closed
symmetric operator  in  $\gotH$ with  deficiency indices $\mathrm{n}_{\pm}(A) = n <\infty$. Let
$\Pi=\{\kH,\Gamma_0,\Gamma_1\}$ be a boundary triplet for $A^*$ and
let $\gamma(\cdot)$ and $M(\cdot)$ be the corresponding
$\gamma$-field and Weyl function, respectively.  Let  $A_0 = A^*\!\upharpoonright\ker(\Gamma_0) = A_0^*$,
and  $A_\Theta=A^*\upharpoonright\Gamma^{-1}\Theta  \in \Ext_A$ where $\Theta = \Theta^* \in\widetilde{\mathcal{C}}(\kH)$
(see formula \eqref{II.1.2_01A}).

Since  $\mathrm{n}_{\pm}(A) = n <\infty$,  the Kato-Rosenblum theorem (see \cite{AkhGlz81}, \cite{ReedSim80}, \cite{Yaf92})
ensures the existence of the {\it wave operators}
\begin{equation*}
W_\pm(A_\Theta,A_0) := s - \lim_{t\to\pm\infty}e^{itA_\Theta}e^{-itA_0}P^{ac}(A_0),
\end{equation*}
and their completeness.   Here $P^{ac}(A_0)$ denotes the orthogonal
projection onto the ac-subspace $\gotH^{ac}(A_0)$
of $A_0$. Completeness means that the ranges of
$W_\pm(A_\Theta,A_0)$ coincide with the ac-subspace $\gotH^{ac}(A_\Theta)$
of $A_\Theta$, cf. \cite{BaumWall83,Kato66,Weid2003}.
The {\it scattering operator} $S(A_\gT,A_0)$ of the {\it scattering system}
$\{A_\Theta,A_0\}$ is
\begin{equation}\label{st}
S(A_\gT,A_0):= W_+(A_\Theta,A_0)^*W_-(A_\Theta,A_0).
\end{equation}
The scattering operator regarded as an operator in
$\gotH^{ac}(A_0)$ is unitary and commutes with the ac-part
$A^{ac}_0:=A_0\upharpoonright \dom(A_0)\cap\gotH^{ac}(A_0)$ of $A_0$.

Since the deficiency indices of $A$ are finite,
the limits  $M(\gl) := \lim_{y\downarrow 0}M(\gl + iy)$
exists for a.e $\gl \in \bR$ in the operator norm of  $\kH$. Setting
\begin{equation}\label{eq:2.23}
\kH_\gl := \overline{\ran(\im M(\gl))}, \quad \gl \in \bR,
\end{equation}
we introduce   the  family $\kT =\{\kH_\gl\}_{\gl \in \bR}$ of subspaces of $\kH$
which is well-defined for a.e. $\gl \in \bR$ and measurable.
 Let $L^2(\bR,d\gl;\kH_\gl)$ be the corresponding direct integral associated with
the family of subspaces  $\{\kH_\gl\}_{\gl \in \bR}$. Then the scattering operator
$S(A_\gT,A_0)$ of the pair $\{A_\Theta,A_0\}$ induces  the decomposable
operator  (the $S(A_\gT,A_0)$-matrix)
in  $L^2(\bR,d\gl;\cH_\gl)$
of the form \eqref{eq.decomposable_oper_in_dir_integr}:
$$(\kM_\kT(S(A_\gT,A_0))f)(\gl) := S_\kT(A_\gT,A_0;\gl)f(\gl), \quad
f \in L^2(\bR,d\gl;\kH_\gl).$$
\begin{theorem}[{\cite[Theorem 3.8]{BerMalNei08}}]\label{scattering}
Let $A$ be as above and let
 $\Pi= \{\kH,\Gamma_0,\Gamma_1\}$ be a boundary triplet for $A^*$
with the corresponding Weyl function $M(\cdot)$.  Further, let $\kT = \{\cH_\gl\}_{\gl\in\bR}$
be given  by \eqref{eq:2.23},  let
$A_0=A^*\!\upharpoonright\ker(\Gamma_0)$,
$A_\Theta=A^*\upharpoonright\Gamma^{-1}\Theta = A_\Theta^*$, and let
$S(A_\gT,A_0)$ be the scattering operator of the scattering system $\{A_\gT,A_0\}$.
Then the following holds:

\item[\;\;\rm(i)] The direct integral $L^2(\bR,d\gl;\cH_\gl)$ is a spectral representation of $A^{ac}_0$;

\item[\;\;\rm(ii)] Let  $\{S_\kT(A_\gT,A_0;\gl)\}_{\gl \in \bR}$ be the scattering matrix
of the scattering system  $\{A_\Theta,A_0\}$
with respect to $L^2(\bR,d\gl;\cH_\gl)$. Then for a.e. $\gl \in \bR$
\begin{equation}\label{scatformula}
\begin{split}
S_\kT&(A_\gT,A_0;\gl) \\
=& I_{\kH_\gl} +
2i\sqrt{\im M(\gl)}\bigl(\Theta-M(\gl)\bigr)^{-1}
\sqrt{\im M(\gl)}\in \mathcal{B}(\cH_\lambda). 
\end{split}
\end{equation}
\end{theorem}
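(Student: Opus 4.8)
\emph{Step 1: existence, completeness, and reduction to a simple operator.}
First I would observe that Krein's formula \eqref{II.1.4_01},
$(A_\gT - z)^{-1} - (A_0 - z)^{-1} = \gamma(z)(\gT - M(z))^{-1}\gamma(\bar z)^*$,
exhibits the resolvent difference as an operator of rank at most $n=\dim\cH<\infty$, hence trace class; so the Kato--Rosenblum theorem (already invoked in the excerpt) yields existence and completeness of $W_\pm(A_\gT,A_0)$ and makes $S(A_\gT,A_0)$ a unitary operator on $\gotH^{ac}(A_0)$ commuting with $A_0^{ac}$. It therefore suffices to compute the fibers of $S(A_\gT,A_0)$ in \emph{one} convenient spectral representation of $A_0^{ac}$. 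Writing $A=A'\oplus S$ with $A'$ simple and $S=S^*$, the summand $S$ is shared by $A_0$ and $A_\gT$, does not enter $M(\cdot)$, and contributes the identity to the scattering operator; so one may and does assume $A$ simple.

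\emph{Step 2: the direct integral is a spectral representation of $A_0^{ac}$.}
Here the $\gamma$-field carries the argument. From \eqref{II.1.3_01} one has the identity $M(z)-M(\zeta)^*=(z-\bar\zeta)\,\gamma(\zeta)^*\gamma(z)$ for $z,\zeta\in\rho(A_0)$, in particular $\im M(z)=(\im z)\,\gamma(z)^*\gamma(z)$ on $\C_+$. Feeding this into the integral representation \eqref{WF_intrepr} identifies the absolutely continuous part of the operator measure $\gS_M$ with the density $\tfrac1\pi\im M(\gl)$ on the full-measure set where the norm limit $M(\gl):=M(\gl+i0)$ exists. Since $A$ is simple, $A_0$ is unitarily equivalent to multiplication by the independent variable in the model space built from $\gS_M$; passing to the ac part produces an isometry $V$ from $\gotH^{ac}(A_0)$ onto $L^2(\bR,d\gl;\cH_\gl)$, $\cH_\gl=\overline{\ran(\im M(\gl))}$, with $VA_0^{ac}V^*=\kM_\kT$. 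This is assertion (i); it also pins down, for a.e. $\gl$, the weighted boundary value of the $\gamma$-field $\sqrt{y}\,\gamma(\gl+iy)\to Z(\gl)$, which after passage to the spectral representation acts as $\pi^{-1/2}\sqrt{\im M(\gl)}$ — the operator entering the scattering matrix.

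\emph{Step 3: the stationary formula.}
Next I would insert the Krein factorization of Step 1 into the stationary representation of the scattering matrix for a finite-rank-resolvent perturbation (the Birman--Yafaev stationary scheme). For a.e. $\gl$ this writes $S_\kT(A_\gT,A_0;\gl)$ as $I_{\cH_\gl}$ plus $-2\pi i$ times the compression to $\cH_\gl$ of $Z(\gl)\bigl(-(\gT-M(\gl))^{-1}\bigr)Z(\gl)^*$, where $Z(\gl)$ is the boundary value from Step 2 and the middle factor is the $T$-operator at energy $\gl$, identified via Krein's formula up to the $\gamma$-field factors already absorbed into $Z(\gl)$. Substituting $Z(\gl)=\pi^{-1/2}\sqrt{\im M(\gl)}$, the two factors $\pi^{-1/2}$ cancel the $2\pi$, the two minus signs combine to a plus, and one arrives at
\[
S_\kT(A_\gT,A_0;\gl)=I_{\cH_\gl}+2i\sqrt{\im M(\gl)}\,\bigl(\gT-M(\gl)\bigr)^{-1}\sqrt{\im M(\gl)},
\]
where for a.e. $\gl$ the compressed inverse $(\gT-M(\gl))^{-1}$ on $\cH_\gl$ exists by the Nevanlinna property $M(\cdot)\in R^u[\cH]$ together with self-adjointness of $\gT$, and equals $(C-DM(\gl))^{-1}D$ in the block form \eqref{themM-Prelim} when $\gT=\ker(C\ D)$. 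This is (ii).

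\emph{Main obstacle.}
The constant-and-sign bookkeeping in Step 3 is routine once the framework is fixed; the genuine work is the boundary passage $y\downarrow0$. One must establish, for a.e. $\gl$, the existence of the norm limit $M(\gl)$ (finiteness of the deficiency indices is essential here), the existence of the weighted $\gamma$-field limit and the resulting isometry onto $L^2(\bR,d\gl;\cH_\gl)$ of Step 2, the boundedness of the compressed resolvent $(\gT-M(\gl))^{-1}$ on $\cH_\gl$, and the identification of the time-dependent wave operators with the stationary expressions (Birman's invariance principle). These analytic inputs carry the proof; the explicit formula then follows by the accounting above.
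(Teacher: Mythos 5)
This theorem is not proved in the paper at all; it is quoted verbatim from \cite{BerMalNei08} and used as a black box, so there is no internal proof to compare against. Your plan is a correct reconstruction of the argument of that reference and follows essentially the same route: reduction to a simple $A$, the identity $\im M(z)=\im z\cdot\gamma(z)^*\gamma(z)$ feeding into the model representation built from $\Sigma_M$ with a.e.\ density $\pi^{-1}\im M(\gl+i0)$ (which gives (i)), and the stationary scheme applied to the finite-rank resolvent difference from Krein's formula with $Z(\gl)=\pi^{-1/2}\sqrt{\im M(\gl)}$ (which gives (ii)); the analytic points you flag --- a.e.\ existence of the norm limits of $M$ and of $(\gT-M)^{-1}$, guaranteed here by $\dim\cH<\infty$ since these are again matrix Nevanlinna functions --- are indeed where the real work lies.
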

\begin{remark}\label{rem:2.11}
{\rm
In Section \ref{sect.ScatMatr}  it happen that  $\im M(\gl)$ admits   the factorization
\begin{displaymath}
\im M(\gl) = Z(\gl)^* Z(\gl)\qquad  \text{for a.e.} \   \gl \in \bR,
\end{displaymath}
where $\{Z(\gl)\}_{\gl \in \bR}$ is a measurable family
of bounded operators $Z(\gl) \in \mathcal{B}(\cH_\lambda)$.
In that case there is a measurable family  $\{V(\gl)\}_{\gl \in \bR}$ of partial
isometries  from $\mathcal{Z}_\gl := \overline{\ran(Z(\gl))}$ onto
$\kH_\gl$ such that  the polar decomposition holds
\begin{equation}\label{eq:2.25}
\sqrt{M(\gl)} = V(\gl)Z(\gl)  \qquad  \text{for a.e.} \   \gl \in \bR.
\end{equation}
Combining  \eqref{eq:2.25} with  representation  \eqref{scatformula} yields
\begin{displaymath}
S_\kT(A_\gT,A_0)(\gl) = I_{\kH_\gl} +
2iV(\gl)Z(\gl)\bigl(\Theta-M(\gl)\bigr)^{-1}
Z(\gl)^*V(\gl)^*
\end{displaymath}
for a.e. $\gl \in \bR$. The family of subspaces  $\mathcal{Z} = \{\mathcal{Z}_\gl\}_{\gl\in\bR}$
is  measurable.
Let us introduce the direct integral
$L^2(\bR,d\gl;\mathcal{Z}_\gl)$ and  the multiplication operator
\begin{displaymath}
(\mathcal{V} f)(\gl) : =V(\gl)f(\gl), \qquad f \in L^2(\bR,d\gl;\mathcal{Z}_\gl).
\end{displaymath}
Clearly, $\mathcal{V}$ is an isometry  from $L^2(\bR,d\gl;\mathcal{Z}_\gl)$ onto $L^2(\bR,d\gl;\kH_\gl)$
and  $\mathcal{V}^*\kM_\kT \mathcal{V} = \kM_\kZ,$ hence
the direct integral $L^2(\bR,d\gl;\kZ_\gl)$ is also a spectral representation of $A^{ac}_0$. Further,
let $\{S_\kZ(A_\gT,A_0)(\gl)\}_{\gl \in \bR}$ be the $S(A_\gT,A_0)$-matrix with respect to $L^2(\bR,d\gl;\kZ_\gl)$.
Then
\begin{displaymath}
S_\kZ(A_\gT,A_0)(\gl) = I_{\kZ_\gl} +
2iZ(\gl)\bigl(\Theta-M(\gl)\bigr)^{-1}
Z(\gl)^*  \quad \text{for a.e.}\quad  \gl \in \bR.
\end{displaymath}
Comparing this formula with  \eqref{scatformula}    yields
\begin{displaymath}
S_\kT(A_\gT,A_0)(\gl) = V(\gl)S_\kZ(A_\gT,A_0)(\gl)V(\gl)^* \quad \text{for a.e.}\quad  \gl \in \bR.
\end{displaymath}
}
\end{remark}

\section{Vector-valued Sturm-Liouville operators}\label{sect.GMNP}

In this section we recall the main results from the paper \cite{GMNP17}.
We will assume that $Q(\cdot)=Q(\cdot)^*\in L^1(\mathbb{R}_+;\mathbb{C}^{m\times m}).$
Note that self-adjointness of a potential matrix $Q(\cdot)$ means that $Q(x)=Q(x)^*$ for a.e. $x\in\mathbb{R}_+.$
Let us consider the Sturm-Liouville differential expression
$$
\mathcal{L}_Q(f(x)):=-\frac{d^2 f(x)}{dx^2} + Q(x)f(x), \quad f=(f_1,\ldots,f_m)^T, \quad x\in\mathbb{R}_+.
$$
We are interested in matrix-valued solutions $Y(x,z)$ of the equation
  \begin{equation}\label{eq:10}
\kL_Q(Y(x,z)) = zY(x,z) , \quad x\in\R_+, \quad z \in \C.
  \end{equation}
Let $C(x,z)$ and $S(x,z)$ be the matrix-valued solutions of the equation  \eqref{eq:10}
satisfying the initial  conditions
\begin{displaymath}
C(0,z) = S'(0,z)= I_m \quad \text{and}\quad   S(0,z)= C'(0,z)=\mathbb{O}_m, \quad z \in \C.
\end{displaymath}

Let $Q(\cdot) = Q(\cdot)^* \in L^1(\R_+;\C^{m\times m})$. Denote by $AC_{\loc}(\R_+)$
the set of locally absolutely continuous functions  on $\R_+$, i.e. $f\in
AC_{\loc}(\R_+)$ if $f\in AC[0, b]$ for any $b\in \R_+$.  We set
\begin{align}
A_{\max}f &:= \kL_Q(f), \quad x \in \R_+,\quad f \in \dom(A_{\max}),\label{eq:3.34}\\
\dom(A_{\max}) &:= \left\{f \in L^2(\R_+;\C^m):
\begin{matrix}
&f,f' \in AC_{\loc}(\R_+; \C^m),  \\
& \kL_Q(f) \in L^2(\R_+;\C^m)
\end{matrix}
\right\}  \nonumber
\end{align}
and  note (see \cite{AkhGlz81}, \cite{Najm1968}) that the operator $A_{\max}$ coincides with
adjoint operator $A^*$ associated with expression  $\kL_Q$.
The domain of the minimal operator $A:=A_{\min}$ is given by
\begin{equation}
\dom(A) := \dom(A_{\min})= \left\{f \in \dom(A_{\max}): f(0)=f'(0)=0
\right\}. \nonumber
\end{equation}
It is important to note that the operator $A$ is simple.
\begin{lemma}[{\cite[lemma 3.5]{GMNP17}}]\label{lemma 3.15}
Let $Q(\cdot)= Q(\cdot)^* \in L^1(\R_+;\C^{m\times m})$.  Then a triplet
$\Pi=\left\{\mathcal{H}, \Gamma_0, \Gamma_1 \right\}$ with
   \begin{equation}\label{eq: 53-1}
\mathcal{H}=\mathbb{C}^{m}, \quad \Gamma_{0}f=f(0), \quad \Gamma_{1}f=f'(0), \quad
f=\left(f_1,...,f_m\right)^{T}\in{\dom}(A_{\max}),
  \end{equation}
    is a boundary triplet for the operator $A^* = A_{\max}$.

\end{lemma}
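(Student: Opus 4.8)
The plan is to verify the three defining properties of a boundary triplet from Definition~\ref{def_ordinary_bt}: the Green identity \eqref{II.1.2_green_f}, surjectivity of $\Gamma=(\Gamma_0,\Gamma_1)^T$, and (implicitly) that the maps are well defined on $\dom(A_{\max})$. The first step is purely computational: for $f,g\in\dom(A_{\max})$ one integrates by parts twice on $[0,b]$ to get
\begin{equation*}
\int_0^b\bigl(-f''+Qf,g\bigr)_{\C^m}-\bigl(f,-g''+Qg\bigr)_{\C^m}\,dx
= \bigl[(f',g)_{\C^m}-(f,g')_{\C^m}\bigr]_0^b,
\end{equation*}
using that $Q(x)=Q(x)^*$ a.e.\ to cancel the potential terms. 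Since $f,g,f',g'$ lie in $AC_{\loc}$ and $\kL_Q(f),\kL_Q(g),f,g\in L^2(\R_+;\C^m)$, a standard argument (the product $(f',g)-(f,g')$ has an $L^1(\R_+)$ derivative, hence a limit at $+\infty$, and that limit must vanish since otherwise $(f',g)-(f,g')$ would not be integrable) shows the boundary term at $b=+\infty$ is zero. What remains is exactly $-\bigl[(f'(0),g(0))_{\C^m}-(f(0),g'(0))_{\C^m}\bigr]=(\Gamma_1 f,\Gamma_0 g)_{\mathcal H}-(\Gamma_0 f,\Gamma_1 g)_{\mathcal H}$ with $\mathcal H=\C^m$, which is \eqref{II.1.2_green_f}.

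The second step is surjectivity of $\Gamma$. Given any pair $(a,b)\in\C^m\oplus\C^m$, I would use the solutions $C(x,z_0)$ and $S(x,z_0)$ of \eqref{eq:10} at some fixed $z_0\in\C\setminus\R$ (say $z_0=\I$): the function $u(x):=C(x,z_0)a+S(x,z_0)b$ solves $\kL_Q(u)=z_0 u$, satisfies $u(0)=a$, $u'(0)=b$ by the initial conditions $C(0,z_0)=S'(0,z_0)=I_m$, $S(0,z_0)=C'(0,z_0)=\mathbb O_m$, but need not be in $L^2$. To fix this, cut off: pick $\varphi\in C^\infty(\R_+)$ with $\varphi\equiv 1$ near $0$ and compact support, and set $f:=\varphi u$. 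Then $f\in\dom(A_{\max})$ (it is compactly supported, smooth enough, and $\kL_Q(f)\in L^2$ because $Q\in L^1_{\loc}$ and $f$ is bounded with bounded derivatives on its support), while $\Gamma_0 f=f(0)=a$ and $\Gamma_1 f=f'(0)=b$ since $\varphi$ is constant near $0$. Hence $\Gamma$ is onto $\C^m\oplus\C^m$.

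Finally, one notes that $\dim\mathcal H=m$ matches $\mathrm n_\pm(A_{\max}^*\,{}^*)=\mathrm n_\pm(A)=m$ (the minimal operator has deficiency indices $(m,m)$ in the limit-point-at-infinity / regular-at-zero situation guaranteed by $Q\in L^1$), and that $\ker\Gamma=\{f:f(0)=f'(0)=0\}=\dom(A_{\min})=\dom(A)$, confirming consistency; but strictly only Green's identity plus surjectivity are required by the definition. The main obstacle is the analytic input at $+\infty$: making rigorous that the boundary term $(f'(x),g(x))-(f(x),g'(x))$ tends to $0$ as $x\to\infty$ for all $f,g\in\dom(A_{\max})$. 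This is where $Q\in L^1$ is essential — it forces every solution of $\kL_Q(f)=\lambda f$ to behave asymptotically like a solution of $-f''=\lambda f$ (plane waves / exponentials), so that the limit-point case holds at infinity and elements of $\dom(A_{\max})$ together with their derivatives have controlled behavior; this is presumably also recorded in \cite{GMNP17} and can be cited. Everything else is routine integration by parts and an explicit cutoff construction.
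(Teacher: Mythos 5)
The paper does not prove this lemma at all: it is quoted verbatim from \cite[Lemma 3.5]{GMNP17}, so there is no in-paper argument to compare with, and your direct verification (Green's identity by integration by parts plus control of the boundary term at $+\infty$; surjectivity of $\Gamma$ via a cut-off of $C(\cdot,z_0)a+S(\cdot,z_0)b$) is the standard and correct strategy. Two points need repair. First, a sign: two integrations by parts give the boundary term $\bigl[(f,g')_{\C^m}-(f',g)_{\C^m}\bigr]_0^b$, not $\bigl[(f',g)-(f,g')\bigr]_0^b$; with the correct sign the surviving lower-limit contribution is $(f'(0),g(0))-(f(0),g'(0))=(\Gamma_1f,\Gamma_0g)-(\Gamma_0f,\Gamma_1g)$, whereas your displayed version produces the negative of \eqref{II.1.2_green_f} (which would correspond to the triplet with $\Gamma_0$ and $\Gamma_1$ interchanged). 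Second, your justification that the boundary term vanishes at $+\infty$ --- ``otherwise $(f',g)-(f,g')$ would not be integrable'' --- presupposes $f'\overline{g},\,f\overline{g'}\in L^1(\R_+)$, i.e.\ essentially $f'\in L^2(\R_+)$ for every $f\in\dom(A_{\max})$; that is true here but is itself a nontrivial claim. The clean route is the one you gesture at in your closing paragraph: $Q\in L^1$ forces $\mathrm{n}_\pm(A)=m$ with defect elements decaying exponentially together with their derivatives (the $e^{i\sqrt z x}$ asymptotics underlying \eqref{eq: 55}--\eqref{eq: 56}), so one writes $\dom(A^*)=\dom(A)\dotplus\cN_{i}\dotplus\cN_{-i}$ and checks $\lim_{x\to\infty}\bigl[(f,g')-(f',g)\bigr](x)=0$ summand by summand (on $\dom(A)$ the limit equals $B(f,g)+[f,g]_0=0$). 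Either supply that argument or, as you suggest, cite it from \cite{GMNP17}.

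A smaller remark on surjectivity: it is essential that you cut off a genuine solution $u$ of $\kL_Q u=z_0u$ rather than an arbitrary smooth function, since then $\kL_Q(\varphi u)=-\varphi''u-2\varphi'u'+z_0\varphi u$ is continuous with compact support, hence in $L^2$. Your parenthetical reason (``$Q\in L^1_{\loc}$ and $f$ bounded'') is not by itself sufficient --- for a generic bounded compactly supported $f$ the term $Qf$ lies only in $L^1$ and need not be in $L^2$ --- but your actual construction avoids this issue. With these corrections the proof is complete.
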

\begin{proposition}[{\cite[Proposition 3.6]{GMNP17}}]\label{lemma 3.16-1}
Let $Q(\cdot) = Q(\cdot)^* \in L^1(\R_+;\C^{m\times m})$ and let $M(\cdot)$ be the Weyl
function corresponding to the boundary triplet \eqref{eq: 53-1}.
Let $\widehat{\mathbb{C}}$ be the complex plane with a cut along $[0,\infty)$.
Then the following holds:

\item[\;\;\rm(i)] The matrix-valued functions $N_1(z)$,
\begin{equation}\label{eq: 55}
N_{1}(z)=\frac{I_m}{2i\sqrt{z}}+\frac{1}{2i\sqrt{z}}\int\limits_{0}^{\infty}e^{it\sqrt{z}}Q(t)S(t, z)dt,
\end{equation}
and
\begin{equation}\label{eq: 56}
N_{2}(z)=\frac{I_m}{2}-\frac{1}{2i\sqrt{z}}\int\limits_{0}^{\infty}e^{it\sqrt{z}}Q(t)C(t, z)dt,
\end{equation}
are both well defined for $z \in \widehat{\mathbb{C}}_0:=\widehat{\mathbb{C}}\setminus\{0\}$
and continuous as well as holomorphic in $\C \setminus [0,\infty)$;

\item[\;\;\rm(ii)] The following relation holds
  \begin{equation}\label{eq: 54}
N_1(z) M(z) =  N_2(z), \quad z\in \C_+.
  \end{equation}
 \end{proposition}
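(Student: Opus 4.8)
The plan is to realize $M(\cdot)$, $N_1(\cdot)$ and $N_2(\cdot)$ through the Jost (i.e.\ $L^2$-) solution of $\mathcal L_Q(Y)=zY$ and to read the identity $N_1M=N_2$ off the defining formula $M(z)=\Gamma_1\gamma(z)$ of Lemma~\ref{lemma 3.15}. Throughout, $\sqrt z$ is the branch on $\mathbb{C}\setminus[0,\infty)$ with $\im\sqrt z>0$, and $e(\cdot,z)$ denotes the matrix solution of $\mathcal L_Q(Y)=zY$ normalized by $e(x,z)e^{-i\sqrt z x}\to I_m$ as $x\to+\infty$; it solves the Volterra equation $e(x,z)=e^{i\sqrt z x}I_m+\int_x^\infty\frac{\sin(\sqrt z(t-x))}{\sqrt z}Q(t)e(t,z)\,dt$ and, by the usual iteration, has the asymptotics $e(x,z)=e^{i\sqrt z x}(I_m+o(1))$ and $e'(x,z)=i\sqrt z e^{i\sqrt z x}(I_m+o(1))$ as $x\to+\infty$.

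For part (i) I would first derive from the Volterra equations for $C(\cdot,z)$ and $S(\cdot,z)$ the standard exponential bounds $\|S(t,z)\|\le\frac{c(z)}{|\sqrt z|}e^{(\im\sqrt z)t}$ and $\|C(t,z)\|\le c(z)e^{(\im\sqrt z)t}$ (valid for $\im\sqrt z\ge0$), where $c(z)=\exp\big(\tfrac1{|\sqrt z|}\int_0^\infty\|Q(s)\|\,ds\big)$ stays bounded as long as $z$ is bounded away from $0$. Since $|e^{it\sqrt z}|=e^{-(\im\sqrt z)t}$ for $t\ge0$, the exponential factors cancel, so the integrands in \eqref{eq: 55} and \eqref{eq: 56} are dominated — uniformly for $z$ in compact subsets of $\widehat{\mathbb{C}}_0$ — by a constant multiple of $\|Q(t)\|\in L^1(\mathbb{R}_+)$; the surviving factor $1/|\sqrt z|$ is what forces the exclusion of $z=0$. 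Hence the integrals converge absolutely and $N_1,N_2$ are well defined on $\widehat{\mathbb{C}}_0$. For each fixed $t$ the integrands are holomorphic in $z\in\mathbb{C}\setminus[0,\infty)$, so Weierstrass' theorem on locally uniformly convergent families gives holomorphy of $N_1,N_2$ there, and dominated convergence their continuity up to the two edges of the cut away from the origin.

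For part (ii) I would argue as follows. By Lemma~\ref{lemma 3.15}, $\gamma(z)h$ is the unique $L^2$-solution of $\mathcal L_Q(Y)=zY$ with value $h$ at $0$, i.e.\ $\gamma(z)h=e(x,z)e(0,z)^{-1}h$ — here $e(0,z)$ is invertible for $z\in\mathbb{C}_+\subset\rho(A_0)$, since $v\mapsto e(0,z)v$ is precisely $\Gamma_0\upharpoonright\mathcal{N}_z$ — whence $M(z)=e'(0,z)e(0,z)^{-1}$. As $\{C(\cdot,z),S(\cdot,z)\}$ is a fundamental system, matching Cauchy data at $x=0$ gives $e(x,z)=C(x,z)e(0,z)+S(x,z)e'(0,z)$. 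The key step is to evaluate $J(z):=\int_0^\infty e^{ix\sqrt z}Q(x)e(x,z)\,dx$ in two ways: substituting the last identity and invoking \eqref{eq: 55}, \eqref{eq: 56} gives $J(z)=(i\sqrt z I_m-2i\sqrt z N_2(z))e(0,z)+(2i\sqrt z N_1(z)-I_m)e'(0,z)$; on the other hand, replacing $Q(x)e(x,z)$ by $e''(x,z)+z e(x,z)$ (from the equation) and integrating by parts twice, the boundary terms at $+\infty$ vanish by the Jost asymptotics (this is where $\im\sqrt z>0$ is used), leaving $J(z)=i\sqrt z e(0,z)-e'(0,z)$. Equating and cancelling the common terms $i\sqrt z e(0,z)$ and $-e'(0,z)$ leaves $N_1(z)e'(0,z)=N_2(z)e(0,z)$; right-multiplying by $e(0,z)^{-1}$ gives $N_1(z)M(z)=N_2(z)$ on $\mathbb{C}_+$.

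The main obstacle will be this double evaluation of $J(z)$ in part (ii): one has to recognize that the same ``interior'' integral encodes the Cauchy data $e(0,z),e'(0,z)$ both through $N_1,N_2$ and — after integrating the differential equation by parts — directly, and that the identity comes out only once the boundary contributions at infinity are annihilated by the Jost asymptotics, which is exactly why the statement is confined to $z\in\mathbb{C}_+$. The estimates underlying part (i) are routine Volterra/Gronwall bounds.
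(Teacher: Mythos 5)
Your proposal is correct: the Volterra/Gronwall bounds with the cancellation $|e^{it\sqrt z}|\,e^{(\im\sqrt z)t}=1$ give (i), and your double evaluation of $J(z)=\int_0^\infty e^{ix\sqrt z}Q(x)e(x,z)\,dx$ is just the Lagrange (Green) identity pairing the free exponential with the Jost solution, which yields $N_1(z)e'(0,z)=N_2(z)e(0,z)$ and hence \eqref{eq: 54} after right-multiplication by $e(0,z)^{-1}=\bigl(\Gamma_0\!\upharpoonright\mathcal N_z\bigr)^{-1}$. The present paper only cites \cite[Proposition 3.6]{GMNP17} for this statement, and your argument is essentially the proof given there.
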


\begin{lemma}[{\cite[Lemma 3.7]{GMNP17}}]\label{lem:3.7}
Let $Q(\cdot) = Q(\cdot)^*\in L^1(\R_+;\C^{m\times m})$.
Let also $N_1(\cdot)$ and $N_2(\cdot)$ be the functions given  by \eqref{eq: 55} and
\eqref{eq: 56}, respectively. In addition, the following holds:

\item[\;\;\rm(i)] The non-tangential limits
$N_1(\lambda) = \lim_{z\to\!\succ \gl}N_1(z)$ and
$N_2(\lambda) = \lim_{z\to\!\succ \gl}N_2(z)$ ,
exist and are invertible for any $\lambda\in \R_+$;

\item[\;\;\rm(ii)]
For each bounded interval $[a,b] \subseteq \R_+$ there is a rectangle
$\kR(a,b;\varepsilon) := [a,b] \times [0,\varepsilon]$, $\varepsilon > 0$ such that $N_1(\cdot)^{-1}$ and $N_2(\cdot)^{-1}$ exist and are continuous
in $\kR(a,b;\varepsilon)$. In particular, it holds
 \begin{equation}\label{eq:47}
\lim_{z\to\!\succ \gl}N_1(z)^{-1} = N_1(\lambda)^{-1},\qquad
\lim_{z\to\!\succ \gl}N_2(z)^{-1} = N_2(\lambda)^{-1}, \quad  \gl \in \R_+.
\end{equation}
\end{lemma}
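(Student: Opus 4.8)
\textbf{Proof plan for Lemma~\ref{lem:3.7}.}
The plan is to extract everything from the explicit integral representations \eqref{eq: 55} and \eqref{eq: 56} together with standard estimates on the solutions $S(\cdot,z)$ and $C(\cdot,z)$ of \eqref{eq:10} for $Q\in L^1(\R_+;\C^{m\times m})$. First I would record the classical transmutation/Volterra estimates: for $z\in\widehat{\bC}_0$ with $\sqrt{z}$ chosen in the upper half plane, one has $\|S(t,z)\|\le C(1+|z|)^{-1/2}e^{|\im\sqrt{z}|t}$ and $\|C(t,z)\|\le Ce^{|\im\sqrt{z}|t}$, with constants depending only on $\|Q\|_{L^1}$; on any rectangle $\kR(a,b;\varepsilon)$ the factor $e^{|\im\sqrt{z}|t}$ is bounded uniformly in $t$ since $|\im\sqrt{z}|\to 0$ as $\varepsilon\downarrow 0$. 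Combined with $Q\in L^1$ and dominated convergence, this shows that the integrals in \eqref{eq: 55}--\eqref{eq: 56} converge and extend continuously up to $[a,b]$, which already gives the \emph{existence} of the non-tangential boundary values $N_1(\lambda)$, $N_2(\lambda)$ for $\lambda\in\R_+$.

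The substance of the lemma is \emph{invertibility} of $N_1(\lambda)$ and $N_2(\lambda)$ and the local uniformity in a rectangle. For invertibility I would argue by contradiction: suppose $N_1(\lambda)h=0$ for some unit vector $h\in\C^m$ and some $\lambda>0$. Using \eqref{eq: 54}, $N_1(z)M(z)=N_2(z)$ on $\C_+$, and the fact that $M(\cdot)$ is the Weyl function of a half-line Schr\"odinger operator with summable potential (so that, by the results recalled from \cite{GMNP17} and the asymptotics $S(t,z)\sim \sin(\sqrt z\,t)/\sqrt z$, $C(t,z)\sim\cos(\sqrt z\,t)$, one has $M(z)\to M(\lambda+i0)$ with $\im M(\lambda+i0)>0$), I would show that $N_1(\lambda)h=0$ forces a linear dependence among the Jost-type solutions that is incompatible with the Wronskian being a nonzero constant. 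Concretely, the combination $C(t,z)+M(z)S(t,z)$ is (up to normalization) the $L^2$ Weyl solution, and $N_1(z)$, $N_2(z)$ are essentially the coefficients of the expansion of the Jost solution $e^{it\sqrt z}I_m+\int_t^\infty(\cdots)$ in the basis $\{C,S\}$; a vanishing of $N_1(\lambda)$ on a vector would make the corresponding Jost solution proportional to $C$ alone near infinity, contradicting $e^{it\sqrt\lambda}\to$ oscillation with nonzero amplitude. The cleanest route is probably to identify $N_1(z)^{-1}$ (resp. $N_2(z)^{-1}$) directly with an explicit matrix built from the Jost solution $\mathfrak{e}(t,z)$ and its derivative at $t=0$ — these are invertible because $\det$ of the Wronskian-type matrix $[\mathfrak{e}(0,z),\mathfrak{e}'(0,z);\overline{\mathfrak{e}}(0,\bar z),\overline{\mathfrak{e}}'(0,\bar z)]$ is a nonvanishing multiple of $\sqrt z$ for $\im\sqrt z\ge 0$, $z\ne 0$.

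For part~(ii) I would fix $[a,b]\subseteq\R_+$ and choose $\varepsilon>0$ small enough that the continuous matrix functions $N_1(\cdot)$ and $N_2(\cdot)$ on the compact rectangle $\kR(a,b;\varepsilon)$ stay within $\delta$ of their boundary values on $[a,b]$; since $N_1(\lambda)$, $N_2(\lambda)$ are invertible on the compact set $[a,b]$, the norms $\|N_j(\lambda)^{-1}\|$ are uniformly bounded there, so shrinking $\varepsilon$ (using $\|N_j(z)-N_j(\lambda)\|<\|N_j(\lambda)^{-1}\|^{-1}$) keeps $N_j(z)$ invertible on the whole rectangle with uniformly bounded, continuous inverse. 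The limit relations \eqref{eq:47} then follow from continuity of matrix inversion on the set of invertible matrices. The main obstacle I anticipate is the invertibility claim in part~(i): getting the nonvanishing of the relevant Wronskian/connection determinant uniformly up to the real axis (including keeping control of the boundary value $M(\lambda+i0)$ and of $\im M(\lambda+i0)>0$) is the delicate point, and one must be careful that the estimates on $S(t,z)$, $C(t,z)$ degrade as $z\to 0$, so the argument genuinely needs $\lambda\in\R_+$ bounded away from $0$, which is exactly why the statement is phrased for $\lambda\in\R_+=(0,\infty)$ and on rectangles $\kR(a,b;\varepsilon)$ with $[a,b]\subseteq\R_+$.
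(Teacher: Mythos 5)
This lemma is imported verbatim from \cite[Lemma 3.7]{GMNP17}; the present paper gives no proof of it, so there is nothing internal to compare against. Your plan is, in outline, the standard argument and almost certainly the one in the cited source: continuity of $N_1,N_2$ up to $(0,\infty)$ from the integral representations, invertibility of the boundary values via a Jost-solution/Wronskian identity, and part (ii) by compactness plus continuity of matrix inversion on the invertible matrices. The structure is sound.

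Two points need repair. First, your justification of the dominated-convergence step is wrong as stated: on a rectangle $[a,b]\times[0,\varepsilon]$ with $\varepsilon>0$ fixed, the factor $e^{|\im\sqrt z|\,t}$ is \emph{not} bounded uniformly in $t$ (it grows exponentially), and it does not become so by letting $\varepsilon\downarrow 0$ after the fact. What actually makes the integrands in \eqref{eq: 55}--\eqref{eq: 56} dominated by $C\,|Q(t)|$ is the exact cancellation $|e^{it\sqrt z}|=e^{-t\,\im\sqrt z}$ against the growth $\|S(t,z)\|\le C\,\tfrac{t}{1+|\sqrt z|t}\,e^{t\,\im\sqrt z}$ (and similarly for $C(t,z)$), uniformly for $z$ in the rectangle since $|\sqrt z|\ge\sqrt a>0$ there; with that correction the existence and continuity of the boundary values is immediate. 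Second, your invertibility argument is pointed at the right objects but not closed: the nonvanishing of the $2m\times 2m$ Wronskian determinant of the pair $\{\mathfrak e(\cdot,\lambda),\overline{\mathfrak e(\cdot,\lambda)}\}$ only says this pair is a fundamental system, not that $\mathfrak e(0,\lambda)$ itself is invertible. The clean version is: if $N_1(\lambda)h=0$ for some $h\ne 0$, then (since $2i\sqrt\lambda\,N_1(\lambda)$ is, up to transposition, the Jost function $\mathfrak e(0,\lambda)$) the solution $u:=\mathfrak e(\cdot,\lambda)h$ satisfies $u(0)=0$, so the $x$-independent quantity $u(x)^*u'(x)-u'(x)^*u(x)$ vanishes; but its value as $x\to\infty$, computed from $u\sim e^{i\sqrt\lambda x}h$, equals $2i\sqrt\lambda\,\|h\|^2\ne 0$, a contradiction. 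The same argument with $u'(0)=0$ handles $N_2(\lambda)$. With these two fixes your part (ii) argument (uniform bound on $\|N_j(\lambda)^{-1}\|$ over the compact set $[a,b]$, then a Neumann-series perturbation to extend invertibility to a thin rectangle) goes through as written.
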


\begin{theorem}[{\cite[Theorem 3.8]{GMNP17}}]\label{corollary 141}
Let $Q(\cdot) = Q(\cdot)^* \in L^1(\R_+;\C^{m\times m})$. Further, let ${\bf H}_D$ be the
Dirichlet realization of \eqref{eq:10} and let $M(\cdot)$ be the Weyl function
corresponding to the boundary triplet \eqref{eq: 53-1}. Then the following holds:

\item[\;\;\rm (i)]  The non-tangential boundary values $M(\lambda + i0) := \lim_{z\to\!\succ \gl}M(z)$ exist
\emph{for each} $\lambda\in \R_+$ and
  \begin{equation}\label{eq: 141}
    \begin{split}
M(\lambda + i0) =& N_1(\gl)^{-1}N_2(\gl),
\quad \lambda\in \R_+.
\end{split}
\end{equation}
In particular, one has
    \begin{equation}\label{eq:3.51}
    \text{\emph{Im}}(M(\lambda + i0)) = \frac{1}{4\sqrt{\gl}}\left(N_{1}(\lambda)^*
N_{1}(\lambda)\right)^{-1}, \quad \gl \in \R_+.
\end{equation}

\item[\;\;\rm (ii)] The determinant $d_1(z) = \det(N_1(z))$ is holomorphic in $\C \setminus [0,\infty)$ and the set
of its zeros $\gL_1$ is discrete. The Weyl function admits the representation
\begin{equation}\label{eq:3.52}
M(z) = N_1(z)^{-1}N_2(z), \quad z \in \C \setminus \gL_1.
\end{equation}

\item[\;\;\rm (iii)]   The corresponding spectral  measure  $\Sigma_M(\cdot)$ (see \eqref{WF_intrepr}) on
$\R_+$ is  given by
\begin{equation}
\Sigma_M(t) =
 \frac{1}{4\pi}\int\limits_{0}^{t}\frac{1}{\sqrt{\lambda}}\left(N_{1}(\lambda)^* N_{1}(\lambda)\right)^{-1}d\lambda.
\end{equation}
In particular, $\Sigma_M(\cdot)$   is absolutely continuous with \emph{continuous}
density $d\Sigma_M(\lambda)/d\lambda$  of maximal rank, i.e.
$\text{rank} (d\Sigma_M(\lambda)/d\lambda) =m$ for every  $\lambda\in \R_+$.
\end{theorem}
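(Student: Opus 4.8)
The plan is to derive all three items from the factorization $N_1(z)M(z)=N_2(z)$ of Proposition~\ref{lemma 3.16-1}(ii) together with the boundary behaviour of $N_1(\cdot),N_2(\cdot)$ supplied by Proposition~\ref{lemma 3.16-1}(i) and Lemma~\ref{lem:3.7}. First, the entries of $N_1(\cdot)$ are holomorphic on $\C\setminus[0,\infty)$ by Proposition~\ref{lemma 3.16-1}(i), hence so is $d_1(z)=\det N_1(z)$; moreover $d_1\not\equiv 0$, because the standard Volterra bound $\|e^{it\sqrt z}S(t,z)\|\le C|\sqrt z|^{-1}$ (uniformly in $t\ge 0$, for $|z|\ge 1$) together with $Q\in L^1$ gives, via~\eqref{eq: 55}, $N_1(z)=(2i\sqrt z)^{-1}(I_m+O(|\sqrt z|^{-1}))$ as $|z|\to\infty$, so $d_1(z)\ne 0$ for $|z|$ large. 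Thus the zero set $\Lambda_1$ of $d_1$ is discrete in $\C\setminus[0,\infty)$. Inverting~\eqref{eq: 54} yields $M(z)=N_1(z)^{-1}N_2(z)$ on $\C_+\setminus\Lambda_1$; since $N_1(\cdot)^{-1}N_2(\cdot)$ is meromorphic on the connected set $\C\setminus[0,\infty)$ with poles contained in $\Lambda_1$, while $M(\cdot)$ extends holomorphically across $(-\infty,0)$ off its (finitely many, by a Bargmann bound) negative eigenvalues, the identity theorem promotes this to~\eqref{eq:3.52} on all of $\C\setminus\Lambda_1$. Letting $z\to\!\succ\lambda$ and using that by Lemma~\ref{lem:3.7} the limits $N_1(\lambda),N_2(\lambda)$ exist, are invertible, and $N_1(\cdot)^{-1},N_2(\cdot)^{-1}$ are continuous up to $\R_+$ on the rectangles $\kR(a,b;\varepsilon)$, one obtains that $M(\lambda+i0)=N_1(\lambda)^{-1}N_2(\lambda)$ exists for \emph{every} $\lambda\in\R_+$, which is~\eqref{eq: 141}.

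For the imaginary part~\eqref{eq:3.51} I would start, for $z\in\C_+$ near $\lambda$, from
\[
2i\,\im M(z)=M(z)-M(z)^*=N_1(z)^{-1}\bigl(N_2(z)N_1(z)^*-N_1(z)N_2(z)^*\bigr)\bigl(N_1(z)^*\bigr)^{-1},
\]
pass to the limit $z\to\!\succ\lambda$, and then invoke the Lagrange/Wronskian-type identity
\[
N_2(\lambda)N_1(\lambda)^*-N_1(\lambda)N_2(\lambda)^*=\frac{i}{2\sqrt\lambda}\,I_m,\qquad\lambda\in\R_+,
\]
which together with $N_1(\lambda)^{-1}(N_1(\lambda)^*)^{-1}=(N_1(\lambda)^*N_1(\lambda))^{-1}$ gives exactly $\im M(\lambda+i0)=\tfrac{1}{4\sqrt\lambda}(N_1(\lambda)^*N_1(\lambda))^{-1}$. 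The identity itself I would establish by substituting the integral representations~\eqref{eq: 55}--\eqref{eq: 56} and performing a short but somewhat delicate computation using $Q(t)=Q(t)^*$ and the standard asymptotics and Wronskian relations of the fundamental solutions $C(\cdot,z),S(\cdot,z)$ (equivalently, by relating $N_1(z),N_2(z)$ to the Cauchy data at $0$ of the Jost-type solution $e(\cdot,z)$ and exploiting the constancy of the relevant Wronskian on $\R_+$).

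For part~(iii) I would feed~\eqref{eq:3.51} into the Stieltjes--Fatou inversion formula applied to the integral representation~\eqref{WF_intrepr} of the $R[\C^m]$-function $M$: since $\im M(\lambda+i0)$ exists, is finite, and (by the previous paragraph together with Lemma~\ref{lem:3.7}) continuous on $\R_+$, the restriction of the spectral measure $\Sigma_M$ to $\R_+$ carries no singular component and is absolutely continuous with density $d\Sigma_M(\lambda)/d\lambda=\pi^{-1}\im M(\lambda+i0)=\tfrac{1}{4\pi\sqrt\lambda}(N_1(\lambda)^*N_1(\lambda))^{-1}$; integrating over $(0,t)$ gives the asserted formula for $\Sigma_M(t)$. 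Continuity of the density is clear, and since $N_1(\lambda)$ is invertible the matrix $(N_1(\lambda)^*N_1(\lambda))^{-1}$ is positive definite, hence of maximal rank $m$, for every $\lambda\in\R_+$.

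The genuinely substantive point is the Wronskian identity in the second paragraph (equivalently~\eqref{eq:3.51}): unlike everything else, it cannot be read off from the abstract relation $N_1M=N_2$ and requires the explicit structure of $N_1,N_2$ and of the underlying solutions, crucially using $Q=Q^*$. A minor additional technicality is the holomorphic continuation of $M$ across $(-\infty,0)$ needed to upgrade~\eqref{eq:3.52} from $\C_+$ to all of $\C\setminus\Lambda_1$.
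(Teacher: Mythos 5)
The paper does not actually prove this theorem: it is imported verbatim from \cite[Theorem 3.8]{GMNP17}, so there is no internal proof to compare against. That said, your reconstruction is sound and follows the route one would expect the cited reference to take: holomorphy of $d_1=\det N_1$ on the connected set $\C\setminus[0,\infty)$ plus the large-$|z|$ asymptotics $N_1(z)=(2i\sqrt z)^{-1}(I_m+o(1))$ give discreteness of $\Lambda_1$; inverting $N_1M=N_2$ and continuing by the identity theorem gives \eqref{eq:3.52}; Lemma \ref{lem:3.7} gives the boundary values in \eqref{eq: 141}; the Wronskian identity $N_2(\gl)N_1(\gl)^*-N_1(\gl)N_2(\gl)^*=\tfrac{i}{2\sqrt\gl}I_m$ (which I checked is consistent, e.g.\ it reduces to the classical $W[\overline{e},e]=2i\sqrt\gl$ relation for the Jost solution in the scalar case and gives the free-case answer $\im M=\sqrt\gl$) yields \eqref{eq:3.51}; and Stieltjes inversion with the uniform continuity of $\im M(\cdot+i0)$ on the rectangles $\kR(a,b;\varepsilon)$ gives (iii). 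Two caveats. First, you correctly identify the Wronskian identity as the only genuinely substantive step, but you only sketch it; as written the proposal defers the one computation that cannot be read off from $N_1M=N_2$, so it is an outline rather than a complete proof at that point. Second, the parenthetical ``finitely many, by a Bargmann bound'' is wrong under the standing hypothesis $Q\in L^1(\R_+;\C^{m\times m})$: Bargmann-type finiteness requires $xQ\in L^1$, and with $Q\in L^1$ alone the negative eigenvalues of $A_0$ may accumulate at $0$. This does no harm to your argument --- discreteness of $\sigma_p(A_0)\cap\R_-$ in $(-\infty,0)$ is all the identity-theorem step needs, and any negative pole of $M$ is automatically a zero of $d_1$ --- but the justification should be corrected.
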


Similar result is also valid for any selfadjoint extension $\widetilde A$ of the minimal operator $A$. In particular,
the  positive part $\widetilde A E_{\widetilde A}(0,\infty)$ of $\widetilde A$ is purely absolutely continuous. In turn, this results implies the following one.
    \begin{corollary}[{\cite[Proposition 4.1]{GMNP17}}]\label{corof 3.4}
Let $Q = Q^*\in L^1(\mathbb{R}_+;\mathbb{C}^{m\times m})$.
Then the operator $A^*=A_{\emph{max}}$ has no positive eigenvalues, i.e. $\sigma_p(A_{\emph{max}})\cap\mathbb{R}_+=\emptyset$.
  \end{corollary}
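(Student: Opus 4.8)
The plan is to deduce the absence of positive eigenvalues of $A_{\max}=A^*$ directly from the spectral description already obtained in Theorem \ref{corollary 141}. First I would recall the structure: by Lemma \ref{lemma 3.15} the triplet $\Pi=\{\mathbb{C}^m,\Gamma_0,\Gamma_1\}$ with $\Gamma_0 f=f(0)$, $\Gamma_1 f=f'(0)$ is a boundary triplet for $A^*$, so that the Dirichlet realization is exactly $A_0=A^*\!\upharpoonright\ker(\Gamma_0)={\bf H}_D$, and the associated Weyl function is the one analyzed in Proposition \ref{lemma 3.16-1}, Lemma \ref{lem:3.7} and Theorem \ref{corollary 141}. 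Since $A$ is simple (as noted after the definition of $\dom(A_{\min})$), the spectral theory of $A_0$ is completely encoded in $M(\cdot)$.

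Next I would invoke the point-spectrum criterion of Theorem \ref{IV.3}(i): for a fixed basis $\{h_k\}_{k=1}^m$ of $\mathbb{C}^m$, the scalar functions $M_{h_k}(z):=(M(z)h_k,h_k)$ are Nevanlinna functions, and $A_0={\bf H}_D$ has no point spectrum in an interval $(a,b)\subset\mathbb{R}_+$ iff $\lim_{y\downarrow 0} y\,M_{h_k}(x+iy)=0$ for every $x\in(a,b)$ and every $k$. By Theorem \ref{corollary 141}(i) the non-tangential limit $M(\lambda+i0)=N_1(\lambda)^{-1}N_2(\lambda)$ exists as a bona fide (finite) matrix for \emph{every} $\lambda\in\mathbb{R}_+$; hence each $M_{h_k}(\cdot)$ has a finite boundary value at every point of $\mathbb{R}_+$. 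A scalar Nevanlinna function with a finite boundary value $M_{h_k}(x+i0)$ at $x$ automatically satisfies $\lim_{y\downarrow 0} y\,M_{h_k}(x+iy)=0$: this is immediate from the integral representation \eqref{3.26-13}, since $y$ times the Poisson-type kernel $\int_{\mathbb{R}}\big(\tfrac{1}{t-x-iy}-\tfrac{t}{1+t^2}\big)d\mu(t)$ tends to $0$ whenever $x$ is not an atom of $\mu$, and a finite boundary value forces $\mu(\{x\})=0$. Therefore the criterion applies on all of $(0,\infty)$ and $\sigma_{pp}({\bf H}_D)\cap\mathbb{R}_+=\emptyset$, i.e. $\sigma_p(A_0)\cap\mathbb{R}_+=\emptyset$.

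It remains to pass from the Dirichlet realization $A_0$ to the maximal operator $A_{\max}=A^*$. Here I would argue that a positive eigenvalue of $A^*$ would produce one of $A_0$. Concretely, if $\lambda>0$ and $A^*f=\lambda f$ with $0\neq f\in L^2(\mathbb{R}_+;\mathbb{C}^m)$, decompose $f=f_0+f_1$ along $\dom(A^*)=\dom(A_0)\dotplus\mathcal{N}_\lambda$: either $f\in\mathcal{N}_\lambda=\ker(A^*-\lambda)$, which would mean $\mathcal{N}_\lambda\neq\{0\}$, or $f$ has a nonzero component in $\dom(A_0)$. But $\mathcal{N}_\lambda$ for $\lambda>0$ is spanned by the columns of $S(\cdot,\lambda)$ and $C(\cdot,\lambda)$ that lie in $L^2$; for real $\lambda>0$ and summable $Q$ these Jost-type solutions behave like $e^{\pm i\sqrt{\lambda}x}$ at infinity (a consequence of the Levinson-type asymptotics underlying \eqref{eq: 55}--\eqref{eq: 56}), hence none of their nontrivial combinations is square-integrable; thus $\mathcal{N}_\lambda\cap L^2=\{0\}$, so $\lambda\notin\sigma_p(A^*)$ either way. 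The cleanest route, which I expect to be the one the authors use, is simply: $\sigma_p(A^*)\cap\mathbb{R}_+\supseteq\sigma_p(A_0)\cap\mathbb{R}_+$ is automatic, and conversely any eigenfunction of $A^*$ at $\lambda>0$ satisfies some self-adjoint boundary condition, in particular lies in $\mathcal{N}_\lambda$, which is not in $L^2$; alternatively one notes that $A^*E$ for any Borel set $E$ gives the same eigenvalues as all self-adjoint realizations, each of whose positive part is purely a.c. by the remark following Theorem \ref{corollary 141}. The main obstacle is precisely this last step — making rigorous that embedded positive eigenvalues of the \emph{maximal} operator cannot exist even though $A^*$ is not self-adjoint — and it is handled by the $L^2$-nonexistence of oscillatory solutions $\sim e^{\pm i\sqrt{\lambda}x}$ at $\lambda>0$, which is exactly the content that makes $N_1(\lambda),N_2(\lambda)$ invertible in Lemma \ref{lem:3.7}(i).
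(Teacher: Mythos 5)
Your core argument is sound, but it takes a different (and more self-contained) route than the paper. The paper does not reprove this statement: it is imported from \cite[Proposition 4.1]{GMNP17}, and the only ``proof'' offered here is the one-line deduction from the sentence preceding the corollary — the positive part of \emph{every} self-adjoint extension $\widetilde A$ is purely absolutely continuous, hence none of them has a positive eigenvalue; since an eigenfunction $f$ of $A^*$ at real $\lambda$ satisfies $\im(A^*f,f)=0$, the pair $\{\Gamma_0f,\Gamma_1f\}$ spans a neutral line for the boundary form, which extends to a self-adjoint relation $\Theta$, so $f$ would be an eigenfunction of the self-adjoint extension $A_\Theta$ — contradiction. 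You sketch this route as the ``cleanest'' alternative but leave out exactly that extension-to-a-Lagrangian-plane step, which is the only nontrivial point in it. Your main route instead goes through the ODE directly: $\ker(A^*-\lambda)$ consists of the $L^2$ solutions of $-Y''+QY=\lambda Y$, and for $Q\in L^1$ and $\lambda>0$ the Levinson asymptotics give a solution basis behaving like $e^{\pm i\sqrt{\lambda}x}(I_m+o(1))$, so no nontrivial solution is square-integrable and $\mathcal{N}_\lambda=\{0\}$. That is correct and is in fact the analytic content behind the cited result and behind the invertibility of $N_1(\lambda)$, $N_2(\lambda)$ in Lemma \ref{lem:3.7}; it buys an elementary argument that does not need the machinery of Theorem \ref{IV.3} at all.

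Two local criticisms. First, your opening step (no positive point spectrum of $A_0={\bf H}_D$ via $\lim_{y\downarrow0}yM_{h_k}(x+iy)=0$) is correct but entirely redundant once you know $\mathcal{N}_\lambda=\{0\}$ for $\lambda>0$: the maximal operator dominates every realization, so the conclusion for $A_{\max}$ already contains the one for $A_0$. Second, the decomposition $\dom(A^*)=\dom(A_0)\dotplus\mathcal{N}_\lambda$ is only valid for $\lambda\in\rho(A_0)$, and here $\lambda>0$ lies in $\sigma_{ac}(A_0)=[0,\infty)$, so that ``either/or'' dichotomy is vacuous as written — an eigenfunction of $A^*$ at $\lambda$ is by definition an element of $\mathcal{N}_\lambda=\ker(A^*-\lambda)$, and nothing more needs to be said before invoking the asymptotics. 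Trimming those two detours leaves a correct proof.
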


\section{General non-compact  quantum graphs with finitely many edges}\label{sect.quagr}

\subsection{Framework}\label{frame}
Let us set up the framework. Let $\mathcal{G}=(\mathcal{V}, \mathcal{E})$ be a noncompact metric and connected graph,
consisting of finitely many edges $\mathcal{E}$  and  vertices  $\mathcal{V}$. Since graph $\mathcal{G}$ is noncompact,
at least one of the edges  has infinite length. For two vertices $v,u\in \mathcal{V}$ we  write $v\sim u$ if there is an edge $e_{u,v}\in \mathcal{E}$ connecting $v$ with $u.$ For every $v\in \mathcal{V},$ we denote the set of edges incident to the vertex $v$ by $\mathcal{E}_v$ and
\begin{equation}\label{eq_deg}
\text{deg}(v)= \#\{e: e\in \mathcal{E}_v\}
\end{equation}
is called the degree of a vertex $v\in \mathcal{V}.$
Let us note that the vertices $v\in \mathcal{V}: \text{deg}(v)=1$ are called the loose ends.
A path $\mathcal{P}$ of length $n\in\mathbb{N}$ is a subset of vertices $\{v_0, v_1,...,v_n\}\subset \mathcal{V}$ such that $n$ vertices $\{v_0, v_1,...,v_{n-1}\}$ are distinct and $v_{k-1}\sim v_k$ for all $k\in\{1,...,n\}.$ A graph $\mathcal{G}$ is called connected if for any two vertices $u$ and $v$ there is a path $\mathcal{P}=\{v_0, v_1,...,v_n\}$ connecting $u$ and $v,$ that is, $u=v_0$ and $v=v_n.$

Let us assign each finite edge $e\in \mathcal{E}_{\text{fin}}$ with length $|e|\in (0,\infty)$ and direction, that is, each finite edge $e\in \mathcal{E}_{\text{fin}}$
has one initial $v_o$ and one terminal vertex $v_{in}.$
Also we assign each infinite edge (lead) $e\in \mathcal{E}_{\infty}$ with $[0,\infty),$ and every finite edge $e\in\mathcal{E}_{\text{int}}$   with the interval $(0, |e|)$;
we also denote $\mathcal{E}=\mathcal{E}_{\text{fin}}\cup \mathcal{E}_{\infty}.$
In the sequel we assume that graph $\mathcal{G}$ consists of $p_1>0$ leads and $p_2\geq 0$ finite edges,
i.e. $|\mathcal{E}_{\infty}|=p_1$ and $|\mathcal{E}_{\text{fin}}|=p_2$, and put  $p:=p_1+p_2$.
Let $\mathcal{G}_0\subset\mathcal{G}$ be a maximal
compact subgraph of the metric graph  $\mathcal{G}$, i.e. $\mathcal{G}_0$ consists of  $p_2$  edges. Moreover,
each edge is equipped with coordinate (denoted $x$) that identifies  this edge with a bounded interval.
We choose some subset $\mathcal{V}_{\text{ext}}$ of vertices of $\mathcal{G}_0,$ to be called external vertices, and attach one or more copies of $[0,\infty)$ to each external vertex; the point 0 in a lead is thus identified with the relevant external vertex.  Denote by  $\mathcal{V}_{\text{int}}=\mathcal{V}\setminus \mathcal{V}_{\text{ext}}$  the set of internal vertices.

First we introduce the Hilbert space $L^2(\mathcal{G};\mathbb{C}^m)$ of functions $f: \mathcal{G}\rightarrow\mathbb{C}^m$
by setting:
  \begin{equation}\label{Hs}
L^2(\mathcal{G};\mathbb{C}^m) = \bigoplus_{e\in \mathcal{E}} L^2(e;\mathbb{C}^m)=\left\{f=\{f_{e}\}_{e\in \mathcal{E}}: f_{e}\in L^2(e;\mathbb{C}^m)\right\},
\end{equation}
where
  \begin{equation}\label{f-vec}
f_{e}=(f_{e,1},...,f_{e,m})^T=\begin{pmatrix}f_{e,1}\\ \vdots\\ f_{e,m}\end{pmatrix} \in L^2(e;\mathbb{C}^m).
  \end{equation}

Let us equip $\mathcal{G}$ with the Sturm-Liouville operator. Assume also that for each edge $e\in \mathcal{E}$
  \begin{equation}\label{aux-3.1.1}
Q_{e}(\cdot)=Q_{e}(\cdot)^*\in L^1(e;\mathbb{C}^{m\times m}),
 \end{equation}
and equality in \eqref{aux-3.1.1} is understood  in the following sense:
$Q_{e}(x)=Q_{e}(x)^*$ for a.e. $x\in e$.

For every $e\in \mathcal{E}$ consider the maximal operator $A_{e,\text{max}}$ associated with the
Sturm-Liouville differential expression
\begin{equation}\label{LQ}
\mathcal{A}_{e}:=-\frac{d^2}{dx^2}+Q_{e}
\end{equation}
on the domain
\begin{equation}\label{He}
\dom(A_{e,\max})=
\left\{f_{e} \in L^2(e;\C^m):
\begin{matrix}
&f_{e},f'_{e} \in AC_{\loc}(e; \C^m),  \\
& \kA_{e}(f_e) \in L^2(e;\C^m), 
\end{matrix}
\right\}, \quad e\in \mathcal{E}.
\end{equation}
It is well known (see for instance \cite[Proposition 9.5(i)]{DerMal17})
that for each $e\in \mathcal{E}_{\text{fin}}$ $A_{e,\max}$ has the following regularity property:
  \begin{equation}\label{dom_H_max}
\dom(A_{e,\max}) = \{f_e\in W^{2,1}(e;\mathbb{C}^m):\mathcal{A}_{e}(f_{e})\in L^2(e;\mathbb{C}^m)\}\subset W^{2,1}(e;\mathbb{C}^m).
  \end{equation}
Therefore there exist the limit values
$f_e(v_o), f_e(v_{in}), f'_e(v_o), f'_e(v_{in})$ for each $e\in \mathcal{E}_{\text{fin}}$.
Moreover, the regularity  property similar to \eqref{dom_H_max} remains valid for
infinite edges (leads) $e\in \mathcal{E}_{\infty}$ provided that $Q_e\in L^1(e; \mathbb{C}^{m\times m})$.
Therefore  there exist  the limit values $f_e(v), f'_e(v)$  for each $e\in \mathcal{E}_{\infty}$
and $v\in \mathcal{V}_{\text{ext}}$.

For every $e\in \mathcal{E}$ we define the minimal operator $A_e:=A_{e,\min}$ associated in $L^2(e;\mathbb{C}^m)$ with differential expression \eqref{LQ}.
In the case of lead $e\in \mathcal{E}_{\infty}$ the domain of the minimal operator $A_e$ is of the form
\begin{equation}\label{DOMa}
\dom(A_e) = \left\{f_e \in \dom(A_{e,\max}):
f_e(v)=f'_e(v) =0, \quad v\in \mathcal{V}_{\text{ext}}
\right\}.
\end{equation}
In the case of finite edge $e\in \mathcal{E}_{\text{fin}}$ the domain of the minimal operator $A_e$ is
\begin{equation}\label{DOMa-1}
\dom(A_e) = \left\{f_e \in \dom(A_{e,\max}):
f_e(v_o)=f_e(v_{in})=f'_e(v_o) =f'_e(v_{in})=0
\right\}.
\end{equation}
Clearly, $A_e$ is closed symmetric operator, and $A_e^*=A_{e,\max}$ (see Section 3).

Note also  that for $Q_e(\cdot)\in L^2(e;\mathbb{C}^{m\times m})$, instead of \eqref{dom_H_max}
we have equality  $\dom(A_{e,\max})=W^{2,2}(e;\mathbb{C}^m),$\  $e\in \mathcal{E}.$

Finally, in $L^2(\mathcal{G};\mathbb{C}^m)$ we define the closed minimal symmetric operator
\begin{equation}\label{symop}
A:=A_{\min}:=\bigoplus_{e\in \mathcal{E}}A_e=A_{\mathcal{E}_{\infty}}\bigoplus A_{\mathcal{E}_{\text{fin}}}
\end{equation}
associated with the expression $\mathcal{A}:=\bigoplus_{e\in \mathcal{E}}\mathcal{A}_e.$
Since $A_{e,\max}=A_e^*$, $e\in \mathcal{E},$ the adjoint operator is given by $A^*=A_{\max}=\bigoplus_{e\in \mathcal{E}}A_e^*.$

\subsection{Absence of singular continuous spectrum for any realization}\label{abssc}

Now we state the main result of the section ensuring the absence of singular continuous
spectrum for each  self-adjoint extension $\wt A\in \Ext_A$.
   \begin{theorem}\label{sc_spec}

Assume that graph $\mathcal{G}$ consists of $p_1>0$ leads and $p_2\geq 0$ edges.
Let $Q_{e_j}:=Q_j\in L^1(e_j;\mathbb{C}^{m\times m})$
for  $j\in\{1,...,p\}.$
Let also $\widetilde{A}$ be an arbitrary self-adjoint extension of the minimal operator $A:=\bigoplus_{e\in \mathcal{E}}A_e$ defined
by \eqref{symop}. Then:

\item[\;\;\rm (i)] The singular continuous spectrum $\sigma_{sc}(\widetilde{A})$ is empty, i.e. $\sigma_{sc}(\widetilde{A})\cap\mathbb{R}=\emptyset$;

\item[\;\;\rm (ii)] The absolutely continuous spectrum $\sigma_{ac}(\widetilde{A})$ fills in the half-line
$\mathbb{R}_+$, \\ $\sigma_{ac}( \widetilde{A}) =[0,\infty),$  and is of the constant multiplicity $mp_1$;

\item[\;\;\rm (iii)] Operator $\widetilde{A}$ is semi-bounded from below and its negative spectrum is either finite
or forms countable sequence tending to zero;

\item[\;\;\rm (iv)] Possible positive eigenvalues embedded in the $ac$-spectrum $\sigma_{ac}(\widetilde{A})$
form a discrete set, i.e. the set  $\sigma_{pp}(\widetilde{A})\cap\mathbb{R_+}$ is discrete.
   \end{theorem}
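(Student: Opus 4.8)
The plan is to construct an explicit boundary triplet for $A^*$ whose Weyl function is block-diagonal and whose blocks are governed by the single-edge theory of Section~\ref{sect.GMNP}, and then invoke the abstract spectral criteria (Theorem~\ref{IV.3}) together with Krein's formula to transfer all four conclusions from $A_0$ to an arbitrary self-adjoint $\widetilde A$. First I would fix, on each lead $e\in\mathcal E_\infty$, the triplet $\{\mathbb C^m,\Gamma_0^e,\Gamma_1^e\}$ of Lemma~\ref{lemma 3.15} (Dirichlet and Neumann traces at the external vertex), and on each finite edge $e\in\mathcal E_{\text{fin}}$ a standard two-endpoint boundary triplet with $\mathcal H_e=\mathbb C^{2m}$; taking the direct sum gives a boundary triplet $\Pi=\{\mathcal H,\Gamma_0,\Gamma_1\}$ for $A^*$ with $\mathcal H=\mathbb C^{mp_1}\oplus\mathbb C^{2mp_2}$ and Weyl function $M(z)=\bigoplus_{e\in\mathcal E}M_e(z)$. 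On each lead the block $M_e(z)$ is the Weyl function studied in Theorem~\ref{corollary 141}; on each finite edge $M_e(z)$ is an $\mathbb M(2m)$-valued Nevanlinna function that is holomorphic across all of $\mathbb R$ except for the discrete set of Dirichlet eigenvalues of the corresponding regular problem, so its imaginary part vanishes a.e.\ on $\mathbb R$.

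The next step is to run Theorem~\ref{IV.3} for $A_0=A^*\!\upharpoonright\ker\Gamma_0$ (which here is a decoupled Dirichlet-type operator, a direct sum of half-line Dirichlet operators and regular Dirichlet operators). For a basis of $\mathcal H$ adapted to the block decomposition, the scalar functions $M_{h_k}(z)$ are, up to a positive factor, the scalar functions $(M_e(z)h,h)$ for the individual edges. By Theorem~\ref{corollary 141}(i) (together with Lemma~\ref{lem:3.7}) the lead blocks have continuous, maximal-rank absolutely continuous density on $\mathbb R_+$ and are holomorphic across $(-\infty,0)$ except for a discrete eigenvalue set; the finite-edge blocks contribute only a discrete point set. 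Hence $\Omega_{sc}(M_{h_k})$ is at most countable for every $k$, giving $\sigma_{sc}(A_0)=\emptyset$ by Theorem~\ref{IV.3}(ii); the point spectrum of $A_0$ in $\mathbb R_+$ is discrete by Corollary~\ref{corof 3.4} applied edgewise; and $\bigcup_k\overline{\Omega_{ac}(M_{h_k})}=[0,\infty)$ with the lead blocks each contributing multiplicity $m$, so $\sigma_{ac}(A_0)=[0,\infty)$ of multiplicity $mp_1$. Semiboundedness of $A_0$ and discreteness of its negative spectrum follow from the corresponding facts for half-line and regular Dirichlet operators with $L^1$ potential.

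To pass from $A_0$ to an arbitrary self-adjoint $\widetilde A=A_\Theta$ I would use that $A$ is simple (it is a direct sum of simple operators), so Proposition~\ref{prop_II.1.4_spectrum} applies, and that $A$ has finite deficiency indices $\mathrm n_\pm(A)=mp$. Therefore $\widetilde A$ is a finite-rank-resolvent perturbation of $A_0$ in the sense of Krein's formula~\eqref{II.1.4_01}. The absolutely continuous parts are then unitarily equivalent by the Kato--Rosenblum theorem (cf.\ the scattering-theory discussion preceding Theorem~\ref{scattering}), which yields (ii) for $\widetilde A$. For (i) and (iv), apply Proposition~\ref{prop_II.1.4_spectrum}(ii): on $\mathbb R_+$ the nontangential limit $M(\lambda+i0)$ exists with $\im M(\lambda+i0)$ of full rank on the lead blocks, so $z\in\sigma_c(A_\Theta)$ reduces to $0\in\sigma(\Theta-M(z))$, whose $z$-set is discrete off a null set by analytic Fredholm arguments applied to the lead blocks plus the standard singular-continuous exclusion of Theorem~\ref{IV.3}(ii); the embedded eigenvalue set $\sigma_{pp}(\widetilde A)\cap\mathbb R_+$ is likewise the zero set of $\det(\Theta-M(z))$-type functions, hence discrete. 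Finally (iii): since $A\ge A_{\min}$ on each edge and finite edges and leads all give semibounded minimal operators, and self-adjoint extensions with finite deficiency indices change semiboundedness only by a finite-dimensional correction, $\widetilde A$ is semibounded below; its negative spectrum lies in the discrete spectrum (the essential spectrum being $[0,\infty)$ by (ii)), so it is finite or accumulates only at $0$.

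The main obstacle I anticipate is the verification that the finite-edge blocks of $M(\cdot)$ really contribute nothing to the singular continuous spectrum and only a discrete set to the point spectrum of $\widetilde A$ — i.e.\ showing that, after coupling through $\Theta$, the zeros of $\det(\Theta-M(z))$ cannot accumulate on $\mathbb R_+$ except possibly at isolated points. This requires combining the analyticity of the finite-edge Weyl functions across $\mathbb R$ with the nontangential-limit and invertibility statements for $N_1(\cdot)$ on the leads (Lemma~\ref{lem:3.7}), so that $\Theta-M(z)$ is an analytic operator family on a complex neighborhood of each compact subinterval of $\mathbb R_+$, allowing the analytic Fredholm theorem to force the exceptional set to be discrete; packaging this uniformly along all of $\mathbb R_+$ is the delicate bookkeeping step.
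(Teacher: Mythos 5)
Your overall strategy coincides with the paper's: the same edge-wise boundary triplet with block-diagonal Weyl function $M=M_\infty\oplus M_{\mathrm{fin}}$, the single-edge results of Section \ref{sect.GMNP} for the lead blocks, meromorphy of the finite-edge blocks, Theorem \ref{IV.3} for the sc-exclusion, and the finite-rank resolvent difference with ${\bf H}_D$ plus Kato--Rosenblum/Weyl for (ii)--(iii). However, there is a genuine gap in your mechanism for transferring the sc-exclusion (and the discreteness of embedded eigenvalues) from $A_0$ to an arbitrary $\widetilde A=A_\Theta$. You invoke Proposition \ref{prop_II.1.4_spectrum}(ii) to reduce $z\in\sigma_c(A_\Theta)$ to $0\in\sigma(\Theta-M(z))$, but that proposition is stated only for $z\in\rho(A_0)$, whereas the points you need to control lie in $\mathbb{R}_+=\sigma_{ac}(A_0)$; the equivalence simply is not available there. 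Nor can you fall back on perturbation theory: as the paper itself stresses in Remark \ref{sc_spec}ff., the singular spectrum is unstable even under rank-one perturbations, so $\sigma_{sc}(A_0)=\emptyset$ says nothing about $\sigma_{sc}(\widetilde A)$.

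The missing device is the transposed boundary triplet: for $\widetilde A=A_B$ disjoint from $A_0$ one sets $\Pi_B=\{\mathcal H,\Gamma_1-B\Gamma_0,-\Gamma_0\}$, so that $\widetilde A$ becomes the distinguished extension of $\Pi_B$ and its Weyl function is $(B-M(z))^{-1}$; only then does Theorem \ref{IV.3}(ii) apply to $\widetilde A$ itself. The real work is then to show that $(B-M(z))^{-1}$ has nontangential boundary values on $\mathbb{R}_+$ outside an at most countable set: one writes $B-M(z)$ in $2\times2$ block form relative to $\mathcal H_\infty\oplus\mathcal H_{\mathrm{fin}}$, inverts it by the Frobenius/Schur-complement formula, uses the meromorphy of $B_{22}-M_{\mathrm{fin}}$ to confine the bad set to the zeros of its determinant together with the poles of $M_{\mathrm{fin}}$, and uses the invertibility of $\im M_\infty(\lambda+i0)$ (Theorem \ref{corollary 141}) to invert the Schur complement off that set. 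This computation is also what yields (iv). Your ``analytic Fredholm plus Theorem \ref{IV.3}(ii)'' sketch gestures at this but does not supply the triplet $\Pi_B$ without which Theorem \ref{IV.3} has nothing to bite on. Finally, you do not treat the case where $\widetilde A$ is not disjoint from $A_0$ (i.e., $\Theta$ is a genuine relation rather than a bounded operator); the paper disposes of it by citing Lemma 2.12 of \cite{MalNei2011}.
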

\begin{remark}
We emphasize  that in contrast to the situation with absolutely continuous spectrum,
 the singular spectrum is unstable with respect to one-dimensional
perturbations (see \cite{ReedSim80}). There exist such operators
$-\frac{d^2}{dx^2}+Q$ in $L^2(\mathbb{R}_+;\mathbb{C}^m)$ for which the Dirichlet
realization has absolutely continuous spectrum while the  Neumann one  contains
sc-component (see \cite{Aron57} and recent publication  \cite{Malamud2019}).
This phenomenon substantially depends on the behavior of the Weyl function and does not
occur under the conditions of Theorem \ref{sc_spec}.
\end{remark}
   \begin{proof}
Let us briefly explain the idea of the proof. We fix  an extension $A_B$ of the
minimal operator $A$ and investigate the non-tangential boundary behavior of the
corresponding Weyl function $(B-M(\cdot))^{-1}$ relying on  Theorem \ref{corollary
141}.

$(\text{i})_1.$ Let $A_j$ be the minimal operator associated with differential expression $\kA_{j}$ on
$L^2(e_j;\C^m)$, $j\in\{1, \ldots, p_1\}$,  for  each lead  $e_j\in \mathcal{E}_{\infty}$. Due to the assumption
$Q_j \in L^1(e_j;\C^{m\times m})$ one has $\mathrm{n}_{\pm}(A_j) = m$.
Moreover,  it is easily seen  that the operator $A_{j}$, $j\in\{1, \ldots, p_1\}$,
is unitarily equivalent to the minimal operator  defined in $L^2(\mathbb{R}_+;\C^m)$ by
the differential  expression \eqref{eq:3.34}  with the matrix potential $Q_{j}(\cdot).$

By Lemma \ref{lemma 3.15} a triplet  $\Pi_j=\{\mathcal{H}_j, \Gamma_0^{(j)},
\Gamma_1^{(j)}\}$ with
  \begin{equation}\label{eq: 410}
\mathcal{H}_{j}=\mathbb{C}^m, \quad \Gamma_0^{(j)} f_{e_j}=f_{e_j}(v), \quad \Gamma_1^{(j)} f_{e_j}=f'_{e_j}(v),
\quad v\in \mathcal{V}_{\text{ext}},
  \end{equation}
is a boundary triplet for $A_{j}^{\ast}$.  Let $M_{j}(\cdot)$, $j\in\{1,...,p_1\}$,  be the  corresponding Weyl function.
By Proposition \ref{lemma 3.16-1}, each  $M_j(\cdot)$ admits the following representation
    \begin{equation}\label{eq:Weyl_fNew}
N_{1,j}(z)M_j(z) = N_{2,j}(z), \quad z \in \widehat{\mathbb{C}}_0, \qquad j\in\{1,...,p_1\},
 \end{equation}
where
\begin{equation}\label{eq:M1_M2New}
\begin{gathered}
N_{1,j}(z) :=\frac{I_m}{2i\sqrt{z}} + \frac{1}{2i\sqrt{z}}\int_{e_j} e^{i\sqrt{z} t}Q_j(t)S_j(t, z)dt,\\
N_{2,j}(z) :=\frac {I_m}{2} - \frac{1}{2i\sqrt{z}}\int_{e_j} e^{i\sqrt{z} t}Q_j(t)C_j(t, z)dt,
\end{gathered}
\end{equation}
and $S_{j}(\cdot,z)$, $C_{j}(\cdot,z)$ are the solutions of the equations
\begin{equation}\label{eq:4.13}
\kA_{j}(S_{j}(x,z)) = zS_{j}(x,z) \quad \mbox{and} \quad \kA_{j}(C_{j}(x,z)) = zC_{j}(x,z), \quad x \in e_j,
\end{equation}
satisfying  the initial  conditions
\begin{equation}\label{eq:4.14}
C_{j}(v,z) = S'_{j}(v,z) = I_m \quad\text{and}\quad S_{j}(v,z) = C'_{j}(v,z) = \mathbb{O}_m, \quad z \in \C.
\end{equation}

Let $A_j$ be the minimal operator associated with the differential expression $\kA_{j}$
on $L^2 (e_j;\C^m)$, $j \in \{p_1+1,\ldots,p\}$, i.e. $e_j\in \mathcal{E}_{\text{fin}}$. Due to the assumption
$Q_j \in L^1(e_j;\C^{m\times m})$ one has $\mathrm{n}_{\pm}(A_j) = 2m$,\ \  $j\in \{p_1+1,\ldots,p\}$. In accordance with \eqref{dom_H_max}
for every $f_{e_j}\in\dom(A_j^*)$
there are exist boundary values
$f_{e_j}(v_o), \ f_{e_j}(v_{in}), \ f'_{e_j}(v_o), \  f'_{e_j}(v_{in}).$
Therefore the boundary triplet $\Pi_j=\{\mathcal{H}_j, \Gamma_0^{(j)}, \Gamma_1^{(j)}\}$ for the operator $A_j^{\ast}$
can be chosen  in the following form:
\begin{equation}\label{eq: 400}
\mathcal{H}_j=\mathbb{C}^{2m}, \quad  \Gamma_0^{(j)} f_{e_j}=\begin{pmatrix}f_{e_j}(v_o)\\f_{e_j}(v_{in})\end{pmatrix}, \quad
\Gamma_1^{(j)} f_{e_j}=\begin{pmatrix}f'_{e_j}(v_o)\\f'_{e_j}(v_{in})\end{pmatrix}.
\end{equation}
The corresponding  Weyl function  is given by
\begin{equation}\label{eq: 3.24A}
M_j(z)=
\begin{pmatrix}
-S_{j}(v_{in},z)^{-1}C_{j}(v_{in},z)  & S_{j}(v_{in},z)^{-1}\\
(S_{j}(v_{in},\overline{z})^*)^{-1}   &   -S'_{j}(v_{in},z)S_{j}(v_{in},z)^{-1}
\end{pmatrix},
\quad z \in \C_\pm.
\end{equation}
Setting $\Pi:=\bigoplus_{j=1}^p \Pi_j=\{\mathcal{H},\Gamma_0,\Gamma_1\}$, where
\begin{equation}\label{triples}
\mathcal{H}:=\bigoplus_{j=1}^{p}\mathcal{H}_j:=\mathcal{H}_{\infty}\bigoplus\mathcal{H}_{\text{fin}},
\qquad \Gamma_a:=\bigoplus_{j=1}^{p}\Gamma_a^{(j)}, \quad a\in \{0,1\},
\end{equation}
and  $\Gamma_a^{(j)}$ are given  by  \eqref{eq: 410} and \eqref{eq: 400}  for $j\leq
p_1$ and  $j>p_1$, respectively,
we obtain a boundary triplet
for $A^*$. Clearly,
\begin{equation}\label{npmA}
\mathrm{n}_{\pm}(A)=\dim\left(\mathcal{H}_{\infty}\bigoplus\mathcal{H}_{\text{fin}}\right)=(p_1+2p_2)m.
\end{equation}

It is easily seen that the Weyl function corresponding to the boundary triplet $\Pi$ admits a decomposition $M(z)=M_{\infty}(z)\bigoplus M_{\text{fin}}(z),$
where
  \begin{equation}\label{Wfcomm-1}
M_{\infty}(z):=\bigoplus\limits_{j=1}^{p_1} M_j(z) \quad \text{and}\quad
M_{\text{fin}}(z):=\bigoplus\limits_{j=p_1+1}^{p} M_j(z)
  \end{equation}
The Weyl function $M_{\text{fin}}(\cdot)$ is a meromorphic matrix function in
$\C$ because due to \eqref{eq: 3.24A} each summand is meromorphic.
Hence the  singularities  of $M_{\text{fin}}(\cdot)$ constitute  at most countable discrete
set $\Omega_{\text{fin}} := \{\omega_r\}_{r=1}^\infty$ of real  poles.

$\text{(i)}_2.$ Assume for the beginning  that  $\wt{A} = \wt {A}^*$  is disjoint with $A_0 :=
A^*\upharpoonright \ker \Gamma_0$. Then, in accordance with Proposition
\ref{prop_II.1.2_01}(ii), there exists a bounded operator $B = B^*\in \mathcal{B}\left(\mathcal{H}\right)$
such that
  \begin{displaymath}
\wt{A} = A^*\upharpoonright \ker\left(\Gamma_1-B\Gamma_0\right)=A_B, \qquad
B\in\mathcal{B}\left(\mathcal{H}\right).
  \end{displaymath}
Further, let
\begin{equation}\label{eq: 200}
B=\left(\begin{array}{@{}cc@{}}B_{11}&B_{12}\\B_{12}^{\ast}&B_{22}\end{array}\right)=B^{\ast},
\quad B_{11}\in\mathbb{C}^{mp_1\times mp_1}, \quad B_{22}\in\mathbb{C}^{2mp_2\times 2mp_2},
 \end{equation}
be the block-matrix representation of  $B$  with respect to the orthogonal decomposition $\mathcal{H}=\mathcal{H}_{\infty}\bigoplus\mathcal{H}_{\text{fin}}$.

Then
\begin{equation}\label{eq: 230}
 B-M(z)=\left(\begin{array}{@{}cc@{}}B_{11}-M_{\infty}(z)&B_{12}\\B_{12}^{\ast}&B_{22}-M_{\text{fin}}(z)\end{array}\right).
\end{equation}
Since $M(\cdot)$ is the Weyl function, its imaginary part  $M_I(\cdot)$ is positive
definite in $\C_+$, i.e. $M_I(z)\ge \varepsilon(z)>0$ and
$0\in\rho\bigl(M_I(z)\bigr)$ for $z\in\C_+$. Hence both matrices
$$
\text{Im} \bigl(M(z) -
B\bigr) = \text{Im} M(z) = M_I(z) \quad \text{and} \quad
\text{Im} \bigl(M_{\text{fin}}(z)-B_{22}\bigr) = \text{Im} M_{\text{fin}}(z)
$$
are also positive definite for $z\in\C_+.$  Hence  both matrices
$M(z) - B$ and $M_{\text{fin}}(z) - B_{22}$ are  invertible for $z\in\C_+.$ Therefore the
inverse matrix $(B - M(z))^{-1}$ exists for each $z\in\C_+$ and can be computed  by
the Frobenious formula
 \begin{equation}\label{eq: 240}
 \left(B-M(z)\right)^{-1}=\left(\begin{array}{@{}cc@{}}M_B^{-1}(z)&K_{12}(z)\\K_{21}(z)&K_{22}(z)\end{array}\right),
 \qquad z\in\C_+,
  \end{equation}
 where
\begin{align}
M_B(z):=  & B_{11}-M_{\infty}(z)-B_{12}\left(B_{22}-M_{\text{fin}}(z)\right)^{-1}B_{12}^*, \label{eq: 3.31A} \\
K_{12}(z)=&-M_B^{-1}(z)B_{12}\left(B_{22}-M_{\text{fin}}(z)\right)^{-1}, \label{eq: 3.31C}\\
K_{21}(z)=&-\left(B_{22}-M_{\text{fin}}(z)\right)^{-1}B_{12}^*M_B^{-1}(z), \label{eq: 3.31D}\\
K_{22}(z)=&\left(B_{22}-M_{\text{fin}}(z)\right)^{-1}+\label{eq: 3.31B}\\
          &\left(B_{22}-M_{\text{fin}}(z)\right)^{-1}B_{12}^*M_B^{-1}(z)B_{12}\left(B_{22}-M_{\text{fin}}(z)\right)^{-1}. \nonumber
\end{align}

We are going to show that $\gs_{sc}(\wt {A})\cap \R_+ = \emptyset$.
Since the matrix function $B_{22}-M_{\text{fin}}(\cdot)$ is meromorphic in $\C$ and
invertible in $\C_+$, its determinant $d_1(z):=\det\bigl(B_{22}-M_{\text{fin}}(z)\bigr)$
has at most a countable set of isolated zeros in $\R\setminus\Omega_{\text{fin}}$. Denoting this
null set by $\Omega$, we get
   \begin{equation}\label{AcSpec4.20A}
\lim_{z\to\!\succ\gl}(B_{22}- M_{\text{fin}}(z))^{-1} = \left(B_{22}- M_{\text{fin}}(\gl)\right)^{-1},
\quad \lambda\in\mathbb{R}_+ \setminus
(\Omega \cup \Omega_{\text{fin}}).
   \end{equation}
It follows  from \eqref{eq: 3.31A}, \eqref{AcSpec4.20A}  and Theorem \ref{corollary 141} (i) that the
limit $M_B(\lambda+i0) := \lim_{z\to\succ\!\gl} M_B(z)$
exists for each $\lambda\in\mathbb{R}_+ \setminus (\Omega \cup \Omega_{\text{fin}})$ and
\begin{equation}\label{AcSpec4.21}
M_B(\lambda+i0)= B_{11} - M_{\infty}(\lambda+i0) - B_{12}\left(B_{22}-
M_{\text{fin}}(\gl)\right)^{-1}B_{12}^*,
  \end{equation}
$\lambda\in\mathbb{R}_+ \setminus (\Omega \cup \Omega_{\text{fin}})$. Since $B_{22} = B_{22}^*$, $B_{11} = B_{11}^*$,
and $M_{\text{fin}}(\lambda) =
M_{\text{fin}}(\lambda)^*$  for every $\lambda\in\mathbb{R}_+ \setminus (\Omega \cup
\Omega_{\text{fin}})$, the later  identity yields
\begin{equation}
\text{Im}\left(-M_B(\lambda+i0)\right)=\text{Im}\left(M_{\infty}(\lambda+i0)\right),  \quad
 \lambda\in\mathbb{R}_+ \setminus (\Omega \cup \Omega_{\text{fin}}). \label{eq: 500}
\end{equation}
Since $\text{Im}\left(M_{\infty}(\lambda+i0)\right)$ is invertible for $\lambda\in\mathbb{R}_+ \setminus (\Omega \cup \Omega_{\text{fin}})$
one gets that the limit $M_B(\lambda+i0)$  is invertible for each $\lambda\in\mathbb{R}_+ \setminus (\Omega \cup \Omega_{\text{fin}})$  and
\begin{equation}\label{AcSpec4.21A}
M_B(\gl + i0)^{-1} = \left( B_{11} - M_{\infty}(\lambda+i0) -
B_{12}\left(B_{22}- M_{\text{fin}}(\lambda)\right)^{-1}B_{12}^*\right)^{-1}
\end{equation}
for $\lambda\in
\mathbb{R}_+ \setminus (\Omega \cup \Omega_{\text{fin}}).$
Since the limit $M_B(\gl + i0)$ is invertible we get from
\eqref{AcSpec4.20A},\eqref{AcSpec4.21A} and formulas \eqref{eq: 240}--\eqref{eq: 3.31B} that the limit $B - M(\gl + i0) := \lim_{z\to\!\succ\gl}(B - M(z))$ exists and  for each $\gl \in \R_+ \setminus (\gO \cup \gO_{\text{fin}})$ is invertible,
and is given by
\begin{equation}\label{B-Weyl_func-n}
 B-M(\gl+i0)=
\begin{pmatrix}
B_{11}-M_{\infty}(\gl+i0) & B_{12}\\
B_{12}^{\ast}               & B_{22}-M_{\text{fin}}(\gl)
\end{pmatrix}\,.
\end{equation}
Moreover, we have
   \begin{equation}\label{Limit_cond-n}
 (B - M(\lambda +i0))^{-1} = \lim_{z\to\!\succ\gl}({B} - M(z))^{-1}, \quad \lambda \in \mathbb{R}_+ \setminus (\Omega \cup\Omega_{\text{fin}}).
   \end{equation}

Further, one easily checks that a triplet  $\Pi_B = \{\kH,\gG^B_0,\gG^B_1\}$  with
\begin{displaymath}
\gG^B_0 := \gG_1 - B\gG_0 \quad \mbox{and} \quad \gG^B_1 := - \gG_0
\end{displaymath}
is a boundary triplet for $A^*$ and $A^*\upharpoonright\ker(\gG^B_0) = \wt A$.
The corresponding Weyl function  coincides  with
$M_B(z) = (B - M(z))^{-1}$, $z \in \C_\pm$. Since the set $\Omega \cup \Omega_{\text{fin}}$ is at most countable,
Theorem \ref{IV.3}(ii) applies  and ensures  that $\gs_{sc}(A_B)\cap \R_+ = \emptyset$.

Let us show that $\gs_{sc}(\wt {A})\cap \R_- = \emptyset$. Let us denote by ${\bf H}_{j,D}$ the Dirichlet realization
of the differential expression $\kA_{j}$,  $j\in\{1,\ldots, p\}$, and put ${\bf H}_D:=\bigoplus_{j=1}^{p} {\bf H}_{j,D}$.
In accordance with Theorem \ref{corollary 141},(iv), we have $\gs_{sc}({\bf H}_{j,D})\cap \R_- = \emptyset$
for each $j\in \{1,\ldots,p_1\}$. Moreover, each  operator  ${\bf H}_{j,D}$  for \  $j\in \{p_1+1,\ldots,p\}$,  has discrete
spectrum. Thus, the direct sum ${\bf H}_D =\bigoplus_{j=1}^{p} {\bf H}_{j,D}$  has empty singular continuous spectrum on $\R_-$
too, i.e.  $\gs_{sc}({\bf H}_D)\cap \R_- = \emptyset$.

On the other hand,  the resolvent difference
  \begin{equation}\label{eq.3_finite_resol_difference}
(\wt {A} -i)^{-1} - ({\bf H}_D - i)^{-1}\qquad \text{is finite dimensional.}
    \end{equation}
To prove that  $\gs_{sc}(\wt {A})\cap \R_- = \emptyset$ it remains to
apply  the Weyl theorem on the stability  of the continuous spectrum  under compact
perturbations.

It remains to consider the case of $\wt A$  not disjoint with $A_0$. In this case it suffices to apply  Lemma 2.12 of \cite{MalNei2011}. We leave the details to the reader.

(ii)--(iii).  Similar results for the Dirichlet realization  ${\bf H}_D$ is proved in \cite{GMNP17}  (see Theorem \ref{corollary 141}). Now the results follow from \eqref{eq.3_finite_resol_difference} by applying  the  Kato-Rosenblum and Weyl theorems
on the stability  of the
 absolutely continuous and continuous spectra of ${\bf H}_D$ respectively, under trace class perturbations
(see \cite{AkhGlz81}, \cite{ReedSim80}, \cite{Yaf92}).

(iv). Since the matrix function $M_{\text{fin}}(z)$ is meromorphic, and the set
$\Omega\cup\Omega_{\text{fin}}$ is discrete,
it follows from \eqref{AcSpec4.21} that  the possible  positive eigenvalues of $\widetilde{A}$
can  accumulate  at infinity only.
\end{proof}
\begin{remark}
For the scalar quantum graph $(m=1)$ with zero potential $Q=0$ Theorem \ref{sc_spec}
was obtained earlier by Ong \cite{Ong2006} by using the limiting absorption principle.
\end{remark}
\begin{remark}
In conclusion we mention several papers  devoted to the investigation of ac-spectrum
of certain realizations of Schr\"odinger expression on
graphs  with \emph{infinitely  many edges}. In particular,
 E. Korotyaev and N. Saburova \cite{KorSab15}
shown that the spectrum of Schr\"odinger operators on periodic discrete graphs consists of an
absolutely continuous part and a finite number of eigenvalues of  infinite multiplicity.
The ac-spectrum has  band-zone structure with finitely many gaps.

P. Kuchment and O. Post \cite{KuchPost07} investigated
spectra of graphene operator $H$. Namely,
starting  with the Hill operator
$H^{\text{per}}:=-\frac{d^2}{dx^2}+q_0(x)$ on $L^2(\mathbb{R})$ with
even (on $[0,1]$) periodic potential $q_0(x) = q_0(x+1)$,
assuming   that $q_0(\cdot) \in L^2[0,1]$, and
using the fixed identification of the edges with $[0,1]$, the authors pullback the function $q_0(x)$ to
a potential $q(x)$ on the graphene. It is shown that the $sc$-spectrum  $\sigma_{sc}(H)$ is empty,
$\sigma_{sc}(H)= \emptyset$,  and the $ac$-spectrum $\sigma_{ac}(H)$ has band-gap structure and is given by
$$
\sigma_{ac}(H)=\sigma_{ac}(H^{\text{per}})=\left\{\lambda\in\mathbb{R}: |D(\lambda)|\leq 2\right\},
$$
where $D(\lambda)$ is the discriminant of $H^{\text{per}}$ (see \cite[Theorem 3.6]{KuchPost07}).

Besides,  the pure point spectrum $\sigma_{pp}(H)$ coincides with the spectrum
of the Dirichlet realization of  $-\frac{d^2}{dx^2}+q_0(x)$ in  $L^2(0,1)$
and thus, due to the evenness of $q_0$,
belongs to the union of the edges of spectral gaps of $\sigma(H^{\text{per}})=\sigma_{ac}(H)$.
  Moreover,  all eigenvalues are of infinite multiplicity.

We also  mention  the papers \cite{ESS14} and  \cite{KosNic19}  where the spectra
of radial trees and radial antitrees, respectively, are investigated.

In the first paper the authors consider a rooted radially symmetric metric tree $\mathcal{G}=(\mathcal{V},\mathcal{E})$ with the root $O\in\mathcal{V}$.
For any vertex $v\in\mathcal{V}$
one denotes by $b(v)$  the branching number of $v$, i.e. the number of forward nearest
neighbors of $v$,  and  by $t_n (>0)$
the length of the simple path connecting the vertex $v$ with the root $O$.
Assuming the conditions
\begin{equation}\label{tandb}
\inf_{n\in\mathbb{N}}(t_{n+1}-t_n)>0 \quad \text{and} \quad \inf_{n\in\mathbb{N}}b_n>1
\end{equation}
and  assuming the sequences
$\{t_{n+1}-t_n\}_1^{\infty}$ and $\{b_n\}_1^{\infty}$ to be  finite
it is shown in \cite{ESS14} that for  the Hamiltonian  ${\bf H}_{\text{kir}}$ to have  non-empty ac-spectrum  it is necessary that  the sequence $\{t_{n+1}-t_n,b_n\}_1^{\infty}$ is eventually periodic,
i.e. for certain  $N\in\mathbb{N}$ the sequence  $\{t_{n+1}-t_n,b_n\}_N^{\infty}$
is periodic.
This complements the previous
results by Breuer and Frank \cite[Theorem 1]{BreFra09}
for the discrete Laplacian $\Delta$ on $\mathcal{G}$
as well as for  sparse trees in the metric case.

On the other hand, Kostenko and Nicolussi  \cite{KosNic19}  have  found
sufficient conditions for the Kirchhoff Laplacian ${\bf H}_{\text{kir}}$  on infinite radially symmetric antitrees to satisfy $\sigma_{ac}({\bf H}_{\text{kir}}) = [0, \infty)$. Note that in this case
the presence of positive sc-spectrum is not excluded.

These results demonstrate the duality between
Hamiltonians  ${\bf H}_{\alpha}$ with $\alpha(v) = \alpha_0\text{deg}(v)$
(with  some $\alpha_0\in\mathbb{R}$) and discrete Laplacians  $\Delta$
on equilateral metric graphs $\mathcal{G}$
discovered by K. Pankrashkin  \cite{Pan2012}.
Namely, using the Weyl function technique he   established  unitary equivalence of the    Hamiltonian
${\bf H}_{\alpha}\text{E}_{{\bf H}_{\alpha}}(J)$
being the restriction of ${\bf H}_{\alpha}$
  to any gap  $J$ of the Dirichlet realization on the one hand
and certain function $\eta_{\alpha}^{-1}(\Delta)$ of the weighted
adjacency discrete operator $\Delta$,
on the other hand.
In particular, ${\bf H}_{\alpha}\text{E}_{{\bf H}_{\alpha}}(J)$ and
$\eta_{\alpha}^{-1}(\Delta)$ are absolutely continuous only simultaneously.

In this connection we mention that  the author of \cite{Pan2012}  generalized
and extended the result of the  proceeding paper  \cite{AlbBraMalNei05}
where  a symmetric operator $A$ in $\frak H$ with a gap $(a,b)$ was considered
and spectra within a gap $(a,b)$ of selfadjoint extensions $A_B\in \Ext_A$ were investigated
under the assumption on the  Weyl function to be of scalar type:
$M(\lambda) = m(\lambda)\cdot I_{\cH}$ which is more restrictive than the assumption
on the  Weyl function $M$ in \cite{Pan2012}.
Starting with an operator $T= T^*\in \cB(\cH)$ satisfying $\sigma(T)\subset (a,b)$
and setting $B = m(T)$  it was shown  in \cite{AlbBraMalNei05}  that $A_BE_{A_B}(a,b)$ and $B = m(T)$
are unitarily equivalent.
\end{remark}

\subsection{Hamiltonian with delta-interactions}\label{sect.PosPSp}

Let $\alpha: \mathcal{V}\rightarrow\mathbb{C}^{m\times m}, \quad\alpha(\cdot)=\alpha(\cdot)^*$, be
given.
In this section we consider  quantum graph with
$\delta$-type vertex condition (see  Subsection \ref{frame}):
\begin{equation}\label{delt}
\begin{cases}
f \quad\text{is continuous at}\quad v,\\
\sum_{e\in E_v}f'_e(v)=\alpha(v)f(v),
\end{cases} \quad v\in \mathcal{V}.
\end{equation}
We underline that derivatives in \eqref{delt} are $\text{\bf outgoing}$.

Let also $Q(\cdot)=\bigoplus_{j=1}^p Q_j(\cdot)=Q(\cdot)^*\in
L^1(\mathcal{G};\mathbb{C}^{m\times m})$. The Hamiltonian with delta-interactions ${\bf
H}_{\alpha}:={\bf H}_{\alpha,Q}$ is defined as follows:
  \begin{equation}\label{deltop}
\begin{gathered}
{\bf H}_{\alpha}:={\bf H}_{\alpha,Q}=A_{\max}\upharpoonright \dom({\bf H}_{\alpha}), \quad A_{\max}=A^*, \\
\dom({\bf H}_{\alpha})=\{f\in\dom(A^*): f \quad\text{satisfies}\   \eqref{delt}, \quad
v\in \mathcal{V}\}.
\end{gathered}
\end{equation}

Since $\dom(A_{\text{max}})=\bigoplus_{e\in \mathcal{E}}\dom(A_{e,\text{max}})$,
it follows from \eqref{dom_H_max} that $\dom(A_{\text{max}})\subset W^{2,1}(\mathcal{G};\mathbb{C}^m)$.
Note that in the case of $Q\in L^2(\mathcal{G};\mathbb{C}^{m\times m})$ we have $\dom(A_{\text{max}})=W^{2,2}(\mathcal{G};\mathbb{C}^m)$.

Since $\alpha=\alpha^*$, the Hamiltonian ${\bf H}_\alpha$ is the symmetric extension of
$A =A_{\min}.$ Taking into account \eqref{npmA} and that the number of
conditions in \eqref{delt} is $(p_1+2p_2)m$, we conclude that the Hamiltonian ${\bf
H}_\alpha$ is self-adjoint, i.e. ${\bf H}_{\alpha}={\bf H}_{\alpha}^*.$
For  $\alpha=0$ condition \eqref{delt} is called the Kirchhoff vertex condition, and
the corresponding operator ${\bf H}_{0}= {\bf H}_{\text{kir}}$ -- the Kirchhoff
realization.

As a consequence of Theorem \ref{sc_spec} one gets the following result.
\begin{corollary}\label{corabscsp}
Assume the conditions of Theorem \ref{sc_spec}. Then:
$$
\sigma_{\ac}({\bf H}_{\alpha})= \mathbb R_+ \quad \text{and} \quad  \sigma_{sc}({\bf H}_{\alpha})=\emptyset.
$$
\end{corollary}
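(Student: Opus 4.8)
The plan is to obtain Corollary~\ref{corabscsp} as a direct specialization of Theorem~\ref{sc_spec}, the only preliminary point being to record that $\mathbf{H}_\alpha$ indeed lies within the scope of that theorem, i.e.\ that it is a self-adjoint extension of the minimal operator $A=\bigoplus_{e\in\mathcal{E}}A_e$ introduced in \eqref{symop}. This has in fact already been observed in the paragraph following \eqref{deltop}: since $\alpha(\cdot)=\alpha(\cdot)^*$, the $\delta$-type vertex conditions \eqref{delt} are symmetric, so $\mathbf{H}_\alpha$ defined by \eqref{deltop} is a symmetric extension of $A$; and since the number of independent scalar conditions imposed in \eqref{delt} equals $(p_1+2p_2)m=\mathrm{n}_\pm(A)$ (cf.\ \eqref{npmA}), the extension is in fact self-adjoint, $\mathbf{H}_\alpha=\mathbf{H}_\alpha^*$. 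I would restate this observation in one sentence at the start of the proof.

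With this in hand, I would simply apply Theorem~\ref{sc_spec} to the particular self-adjoint extension $\widetilde{A}=\mathbf{H}_\alpha$. Part~(i) of that theorem then gives $\sigma_{sc}(\mathbf{H}_\alpha)=\emptyset$, and part~(ii) gives $\sigma_{ac}(\mathbf{H}_\alpha)=[0,\infty)$, which coincides with $\mathbb{R}_+$ up to the single endpoint $\{0\}$ (a null set, hence spectrally irrelevant for the absolutely continuous part). No further argument is required, and one may optionally note in passing the refinement from Theorem~\ref{sc_spec}(ii) that the absolutely continuous part has constant multiplicity $mp_1$.

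There is essentially no genuine obstacle here: the content of the corollary is contained in the section's main theorem, whose proof is the substantive part, and the only additional item — that the $\delta$-coupling \eqref{delt} yields a \emph{self-adjoint} rather than merely symmetric realization of $\mathcal{A}$ — has already been settled before the statement. If one prefers a fully self-contained write-up, the single line to include is the comparison of the count of boundary conditions in \eqref{delt} with the deficiency index $(p_1+2p_2)m$ from \eqref{npmA}.
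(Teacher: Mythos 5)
Your proposal is correct and matches the paper's (implicit) argument exactly: the paper records the self-adjointness of ${\bf H}_{\alpha}$ via the count of conditions in \eqref{delt} against the deficiency index \eqref{npmA} in the paragraph following \eqref{deltop}, and then states the corollary as an immediate consequence of Theorem \ref{sc_spec} without further proof. Your remark reconciling $[0,\infty)$ from Theorem \ref{sc_spec}(ii) with $\mathbb{R}_+$ in the corollary's statement is a reasonable, harmless addition.
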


However these relations  do not exclude positive embedded eigenvalues for  ${\bf H}_{\alpha}$.
In this connection we mention the paper  by G. Berkolaiko and W. Liu \cite{BerLiu17}  devoted to investigation of genericity conditions of simplicity of spectrum of quantum graph. Their  main result  reads as follows.
    \begin{theorem}[{\cite[Theorem 3.6]{BerLiu17}}]\label{th36BL}
Let $\mathcal{G}$ be a connected graph with zero potential $Q=0$ and $\delta$-type conditions at vertices. If $\mathcal{G}$ is not equivalent to a circle,
then, after a small modification of edge lengths, the new graph $\widetilde{\mathcal{G}}$ will satisfy the following genericity conditions:

\item[\;\;\rm (i)] The spectrum  $\sigma(\widetilde{\mathcal{G}})$ is simple;

\item[\;\;\rm (ii)] For each eigenfunction $f$ of $\widetilde{\mathcal{G}}$,
either $f(v)\neq 0$ for each vertex $v,$ or $\supp f=L$ for only one loop $L$ of $\widetilde{\mathcal{G}}$.
More precisely, in the space of all possible edge length, the set on which the above conditions are satisfied is residual (comeagre).
\end{theorem}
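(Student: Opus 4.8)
\emph{Sketch of a proof.} The plan is to realize the set of ``good'' length vectors $\ell=(\ell_e)_{e\in\mathcal{E}}\in\mathcal{L}:=(0,\infty)^{\mathcal{E}}$ as a residual subset of the Baire space $\mathcal{L}$ and then read off both assertions, the ``small modification'' statement being immediate from density. To begin, since $Q=0$, an eigenfunction of $\widetilde{\mathcal{G}}(\ell)$ at $\lambda=k^2$ restricts on each edge $e$ to a combination of $\cos(k\cdot)$ and $\sin(k\cdot)$; writing the continuity and $\delta$-conditions at all vertices as a homogeneous linear system for the edge coefficients produces a square matrix $\Phi(k,\ell)$, entire in $k$ and real-analytic in $\ell$, with $\lambda=k^2\in\sigma(\widetilde{\mathcal{G}}(\ell))$ iff $F(k,\ell):=\det\Phi(k,\ell)=0$ and $\dim\ker(\widetilde{\mathcal{G}}(\ell)-k^2)=\operatorname{corank}\Phi(k,\ell)$. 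The finitely many non-positive eigenvalues fit the same scheme with $k\in i\R_{\geq0}$, so I concentrate on $\lambda>0$; here $F(\cdot,\ell)\not\equiv0$, so for fixed $\ell$ the spectrum is discrete.

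For assertion (i) I would cover the $k$-axis (together with the ray $i\R_{\geq0}$) by a locally finite family of compact sets $I$ and set $B_I:=\{\ell\in\mathcal{L}:\operatorname{corank}\Phi(k,\ell)\geq2\ \text{for some}\ k\in I\}$. By continuity of $\Phi$ and compactness of $I$ each $B_I$ is closed, and $\{\ell:\sigma(\widetilde{\mathcal{G}}(\ell))\ \text{is simple}\}=\mathcal{L}\setminus\bigcup_I B_I$, so it suffices to show every $B_I$ has empty interior. Were this false, then --- using analyticity of $\Phi$ and that the corank-$\geq2$ locus has discrete fibres over $\ell$, hence is locally a finite union of graphs of analytic functions --- after shrinking we would obtain an open ball $\mathcal{O}\subset\mathcal{L}$ carrying an analytic eigenvalue branch $\ell\mapsto\lambda_*(\ell)=k_*(\ell)^2$ of constant multiplicity $d\geq2$. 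The decisive tool is then first-order degenerate perturbation theory: after the length-dependent unitary rescaling of every edge onto a fixed reference interval, the derivative $\partial/\partial\ell_e$ of the resulting operator family induces a Hermitian form $\mathfrak{q}_e(\ell)$ on the ($d$-dimensional) eigenspace $\mathfrak{N}_\ell$, computable from the restrictions to $e$ of the eigenfunctions; if $\mathfrak{q}_e(\ell)$ is \emph{not} a scalar multiple of the identity on $\mathfrak{N}_\ell$ for \emph{some} edge $e$, then varying $\ell_e$ strictly decreases the multiplicity, contradicting that it equals $d$ throughout $\mathcal{O}$. Hence $\mathfrak{q}_e(\ell)\in\C\cdot I_{\mathfrak{N}_\ell}$ for \emph{every} $e$: a strong rigidity, since the functions in $\mathfrak{N}_\ell$ then respond identically to all length changes. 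Propagating this through the vertex conditions, using connectedness of $\widetilde{\mathcal{G}}$, forces $\widetilde{\mathcal{G}}$ to be equivalent to a circle, which is excluded by hypothesis. I expect this last implication --- the crux of the argument of \cite{BerLiu17} --- to be the genuine obstacle; the rest is soft analysis and analytic-geometry bookkeeping. (When $\alpha\equiv0$ the rigidity is transparent in the secular-manifold picture of Barra--Gaspard: $F$ depends on $(k,\ell)$ only through the phases $\theta_e=k\ell_e\bmod2\pi$, so the corank-$\geq2$ part of the secular manifold $\Sigma\subset\T^{\mathcal{E}}$ has codimension $\geq2$ unless $\widetilde{\mathcal{G}}$ is a circle --- the case $|\mathcal{E}|=1$, where the flow $k\mapsto(k\ell_e)_e$ covers all of $\T^1$ --- and a generic line in $\T^{\mathcal{E}}$ misses a set of codimension $\geq2$.)

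For assertion (ii) I would proceed in the same spirit. Fix a vertex $v$; on the open subset of $\mathcal{L}$ where the eigenvalue crossing a given compact interval $I$ is simple, ``the eigenfunction vanishes at $v$'' is a real-analytic condition in $\ell$, so $B_{v,I}:=\{\ell:\text{some simple eigenvalue with }k\in I\text{ has eigenfunction }f\text{ with }f(v)=0\text{ and }\supp f\text{ not a single loop}\}$ comes from a locally proper analytic subvariety --- \emph{unless} the vanishing persists on an open set, and a perturbation argument (varying the length of an edge on which $f$ is not identically zero changes $f(v)$ to first order) shows this forces $f$ to be supported on a single loop. The eigenfunctions one genuinely cannot perturb away are exactly those supported on a single loop (cycle) $L$ of $\widetilde{\mathcal{G}}$: the modes of $L$ that vanish, together with the outgoing flux, at the vertices where $L$ meets the rest of $\widetilde{\mathcal{G}}$, whose eigenvalues depend only on $|L|$ and hence persist for all $\ell$, and which satisfy every vertex condition automatically --- precisely the alternative allowed in the statement. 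Thus each $B_{v,I}$ is closed with empty interior.

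Finally, the good set $\mathcal{L}\setminus\bigl(\bigcup_I B_I\cup\bigcup_{v,I}B_{v,I}\bigr)$ is a countable intersection of open dense sets, hence residual (comeagre) in $\mathcal{L}$; its density gives the required small perturbation $\widetilde{\mathcal{G}}$, completing the plan.
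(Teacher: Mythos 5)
First, a point of reference: the paper you are reading does \emph{not} prove this theorem. It is quoted verbatim as \cite[Theorem 3.6]{BerLiu17} (Berkolaiko--Liu) and used as a black box to conclude generic absence of embedded positive eigenvalues; there is no proof in the text to compare against. So your proposal has to be judged against the actual argument of \cite{BerLiu17}, whose overall architecture (Baire category in the space of edge lengths, analyticity of the secular determinant, first-order perturbation of eigenvalues with respect to individual edge lengths) your sketch does reproduce correctly.

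The genuine gap is that you have outlined everything \emph{except} the theorem. Both places where you defer --- (a) ``$\mathfrak{q}_e(\ell)\in\C\cdot I_{\mathfrak{N}_\ell}$ for every $e$ \ldots forces $\widetilde{\mathcal{G}}$ to be equivalent to a circle,'' which you yourself flag as ``the crux \ldots the genuine obstacle,'' and (b) the claim in part (ii) that persistent vanishing $f(v)=0$ on an open set of lengths forces $f$ to be supported on a single loop --- are precisely the content of Berkolaiko--Liu's proof; the surrounding category-theoretic and analytic-continuation bookkeeping is the routine part. In \cite{BerLiu17} step (a) is carried out by writing the first-order eigenvalue variation in $\ell_e$ explicitly in terms of Pr\"ufer-type data of the eigenfunctions on $e$ (an Hadamard-type formula), deducing from scalarity that certain Wronskians of pairs of eigenfunctions vanish edge by edge, and then propagating through the vertex conditions --- this case analysis is several pages and is where connectedness and ``not a circle'' actually enter. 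Step (b) is likewise nontrivial: your parenthetical justification (``varying the length of an edge on which $f$ is not identically zero changes $f(v)$ to first order'') is not true as stated --- the first-order response of an eigenfunction value at a vertex involves the reduced resolvent and can vanish --- and identifying exactly which vanishing configurations are rigid (namely the loop-supported ones) is again the substance of their argument. In addition, your reduction of ``$B_I$ has nonempty interior'' to ``an analytic branch of constant multiplicity $d\ge 2$ on an open ball'' needs an intermediate stratification (multiparameter analytic families do not automatically have analytic eigenvalue branches; one must first locate an open set on which the multiplicity is locally constant before the Riesz projection becomes analytic). As a roadmap your sketch is faithful to the known proof, but as a proof it is incomplete at exactly the load-bearing points.
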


This theorem implies  that in  the scalar case ($m=1$) the positive  point  spectrum of ${\bf H}_{\alpha}$ for graphs with $p_1>0$ leads   and $Q=0$
is generically absent, i.e. the positive  part ${\bf H}_{\alpha}{\text E}_{{\bf H}_{\alpha}}(0,\infty)$ of ${\bf H}_{\alpha}$  is generically purely absolutely continuous.
Apparently  Theorem \ref{th36BL} as well as its just mentioned consequence on embedded positive eigenvalues remains  valid also in the matrix case for summable potentials $Q\in L^1(\mathcal{G};\mathbb{C}^{m\times m}).$  It will be discussed elsewhere.

Note also that   L. Friedlander \cite{Fried2005} showed that the set $\mathcal{M}$ in the parameter space $\mathbb{R}_{+}^{|\mathcal{E}|}$ of metrics, for which all eigenvalues of connected metric graph $\mathcal{G}$ are simple, is residual under certain simple conditions.

 Y. Colin de Verdi\'ere and F. Truc \cite{Colin15}, \cite{ColinTruc18} give an explicit classification of graphs that can possibly  support  embedded eigenvalues.
Also the set of non-normalized semi-classical measures on quantum graphs is described, and
the set of resonances $\text{Res}_{\mathcal{G}}^{\overrightarrow{l}}$ of the Laplacian
$\Delta_{\mathcal{G}}^{\overrightarrow{l}}$ is studied.

\subsection{Hamiltonians with $\delta$-interactions on the line}

Here we apply the previous results to  operators  with $\delta$-interactions on the line (half-line).
To this end let us consider the following formal Schr\"odinger differential expression:
   \begin{equation}\label{helfrGMNP}
\mathcal{A}_{X,\alpha,Q}:=-\frac{d^2}{dx^2}+Q+\sum_{n=1}^N \alpha_n\delta(x-x_n).
\end{equation}
This operator describes $\delta$-interactions on a finite set $X:=\{x_n\}_{n=0}^N (\subset \mathbb{R})$,
with matrix (selfadjoint) coefficients    $\{\alpha_n\}_{n=1}^N \subset \mathbb{C}^{m\times m}$, $N\in \Bbb N$.
In the scalar case $\{\alpha_n\}$ describes the \emph{strength} of the interaction at the point $x=x_n$
and  is called the \emph{intensity}  (see \cite{AlbGesHoeHol05}).

Consider  the maximal operator $A_{\text{max}}$ associated in $L^2(\mathbb{R};\mathbb{C}^m)$
with expression \eqref{helfrGMNP}. It is given by the  expression $-\frac{d^2}{dx^2}+Q$  on the domain
\begin{equation}\label{hellfrGMNP-1}
\dom(A_{\text{max}})
=\left\{f\in L^2(\mathbb{R}\setminus X; \mathbb{C}^m):
\begin{matrix}
f, f'\in AC_{\text{loc}}(\mathbb{R}\setminus X; \mathbb{C}^m),  \\
\mathcal{A}_{X,\alpha,Q}(f)\in L^2(\mathbb{R};\mathbb{C}^m), \\
\end{matrix}
\right\}.
\end{equation}
The domain of the minimal operator $A:=A_{\text{min}}$ is given by
\begin{equation}\label{hGMNPdAm}
\dom(A)
=\left\{f\in \dom(A_{\text{max}}):
\begin{matrix}
f(x_0)=0, \quad f(x_n\pm)=0, \quad n\in\{1,\ldots, N\}, \\
f'(x_0)=0, \quad f'(x_n\pm)=0, \quad n\in\{1,\ldots, N\}
\end{matrix}
\right\}.
\end{equation}
It is known that $A^* =A_{\text{max}}$.
We also consider the Hamiltonian ${\bf H}_{X,\alpha,Q}$ associated with the  expression
\eqref{helfrGMNP}  in  $L^2(\mathbb{R};\mathbb{C}^{m})$ in the following way:
  \begin{equation}\label{hellfrGMNP-2}
\begin{gathered}
{\bf H}_{X,\alpha,Q}=A^*\upharpoonright \dom({\bf H}_{X,\alpha,Q}), \quad \text{where} \\
\dom({\bf H}_{X,\alpha,Q})
=\left\{f\in \dom(A^*):
\begin{matrix}
&f(x_n+)=f(x_n-), \quad 1\leq n\leq N, \\
&f'(x_n+)-f'(x_n-)=\alpha_nf(x_n), \\
\end{matrix}
\right\}.
\end{gathered}
\end{equation}
The operator  ${\bf H}_{X,\alpha,Q} = {\bf H}_{X,\alpha,Q}^*$    has  several important features
missing for general  realizations of $\mathcal{A}_{X,\alpha,Q}$.
If $\alpha_n=0$ for all $n\in\{1,\ldots, N\}$, then ${\bf H}_{X,0,Q}$ coincides with the
free  realization
of the expression $\mathcal{A}_{X,0,Q}$ on the line $\mathbb{R}$.
   \begin{proposition}\label{newPropforR}
Let $A$ be a minimal symmetric operator in $L^2(\mathbb{R};\mathbb{C}^m)$   given by \eqref{hGMNPdAm}
and let $Q(\cdot)=Q(\cdot)^*\in L^1(\mathbb{R};\mathbb{C}^{m\times m})$.
Let also $\widetilde{A}$ be an arbitrary self-adjoint extension of  $A$.
Then:

\item[\;\;\rm (i)] The singular continuous spectrum $\sigma_{sc}(\widetilde{A})$ is empty, i.e. $\sigma_{sc}(\widetilde{A})  =\emptyset$;

\item[\;\;\rm (ii)] The absolutely continuous spectrum $\sigma_{ac}(\widetilde{A})$ fills in the half-line
$\mathbb{R}_+$, \\ $\sigma_{ac}( \widetilde{A}) =[0,\infty),$  and is of the constant multiplicity $2m$;
\item[\;\;\rm (iii)] Operator $\widetilde{A}$ is semi-bounded from below and its negative spectrum is either finite
or forms countable sequence tending to zero;
\item[\;\;\rm (iv)] Possible positive eigenvalues embedded in the $ac$-spectrum $\sigma_{ac}(\widetilde{A})$
form a discrete set, i.e. the set  $\sigma_{pp}(\widetilde{A})\cap\mathbb{R_+}$ is discrete;

\item[\;\;\rm (v)] The spectrum  $\sigma({\bf H}_{X,\alpha,Q})\cap\mathbb{R}_+ = \mathbb{R}_+$ is purely absolutely continuous, i.e. the Hamiltonian ${\bf H}_{X,\alpha,Q}$ has no positive embedded eigenvalues.
\end{proposition}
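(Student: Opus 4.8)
The plan is to observe that the line (or half-line) with finitely many $\delta$-interactions is a particular non-compact quantum graph, so that items (i)--(iv) become special cases of Theorem \ref{sc_spec}, while (v) needs a short additional argument. First I would order the points as $x_0<x_1<\dots<x_N$ and identify $\mathbb{R}\setminus X$ with the metric \emph{path} graph $\mathcal{G}$ whose vertex set is $\mathcal{V}=\{x_0,\dots,x_N\}$, whose finite edges are the bounded intervals $e_k:=(x_{k-1},x_k)$, $k=1,\dots,N$, and which carries two leads $(-\infty,x_0)$ and $(x_N,\infty)$ attached at the external vertices $x_0$ and $x_N$; thus $p_1=2$ and $p_2=N$. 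Under this identification the operator $A$ of \eqref{hGMNPdAm} is exactly the minimal operator $A_{\min}=\bigoplus_e A_e$ of $\mathcal{G}$ (all one-sided Cauchy data of $f$ and $f'$ are required to vanish at every vertex), and ${\bf H}_{X,\alpha,Q}$ is the $\delta$-type Hamiltonian ${\bf H}_{\alpha}$ of $\mathcal{G}$ with $\alpha(x_0)=\mathbb{O}_m$ (the free coupling, since $x_0$ carries no interaction) and $\alpha(x_n)=\alpha_n$ for $1\le n\le N$; here one uses that at a degree-two vertex the outgoing-derivative convention in \eqref{delt} reproduces precisely the matching conditions in \eqref{hellfrGMNP-2}.

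With this reduction in hand, parts (i)--(iv) follow verbatim from Theorem \ref{sc_spec} applied to $\mathcal{G}$: in particular the constant multiplicity of $\sigma_{ac}(\widetilde A)$ is $mp_1=2m$, and the assertions about emptiness of $\sigma_{sc}$, semiboundedness, and discreteness of the embedded point spectrum are copied over directly. So the only genuinely new statement is (v), which I would prove by contradiction. Assume $\lambda>0$ is an eigenvalue of ${\bf H}_{X,\alpha,Q}$ with eigenfunction $f$, $\|f\|_{L^2(\mathbb{R};\mathbb{C}^m)}=1$. Its restriction $f\!\upharpoonright(x_N,\infty)$ lies in $L^2$, belongs to $\dom(A_{\max})$ on that half-line, and solves $-g''+Q g=\lambda g$; after the obvious translation it is therefore an eigenfunction of the maximal operator on $L^2(\mathbb{R}_+;\mathbb{C}^m)$ at $\lambda>0$, which is impossible by Corollary \ref{corof 3.4}. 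Hence $f\equiv 0$ on $(x_N,\infty)$, so $f(x_N+)=f'(x_N+)=0$.

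Next I would propagate this vanishing Cauchy datum leftwards through the $N$ vertices. At each $x_k$ the conditions in \eqref{hellfrGMNP-2} give $f(x_k-)=f(x_k+)$ and $f'(x_k-)=f'(x_k+)-\alpha_k f(x_k)$, so $f(x_k+)=f'(x_k+)=0$ forces $f(x_k-)=f'(x_k-)=0$; since $f\!\upharpoonright e_k$ solves the second-order linear equation $-f''+Q f=\lambda f$ on $e_k$ with $Q_{e_k}\in L^1(e_k;\mathbb{C}^{m\times m})$, uniqueness for the initial value problem (available in the $L^1$-coefficient setting, cf. Section \ref{sect.GMNP}) yields $f\equiv 0$ on $e_k=(x_{k-1},x_k)$, hence $f(x_{k-1}+)=f'(x_{k-1}+)=0$. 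After $N$ steps $f\equiv 0$ on $(x_0,x_1)$, and continuity of $f,f'$ at $x_0$ together with the same uniqueness argument gives $f\equiv 0$ on $(-\infty,x_0)$. Thus $f\equiv 0$ on $\mathbb{R}$, contradicting $\|f\|=1$; therefore $\sigma_{pp}({\bf H}_{X,\alpha,Q})\cap\mathbb{R}_+=\emptyset$, and combined with (i)--(ii) this gives that $\sigma({\bf H}_{X,\alpha,Q})\cap\mathbb{R}_+=\mathbb{R}_+$ is purely absolutely continuous.

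I do not expect a serious obstacle: all analytic inputs (Jost-type asymptotics on a half-line, hence absence of positive $L^2$-solutions, via Corollary \ref{corof 3.4}; unique solvability of the $L^1$-coefficient Cauchy problem; the identification of the graph minimal operator) are already in place. The one point to keep in mind is conceptual rather than technical: the leftward propagation in the proof of (v) works precisely because the ambient metric graph here is a tree (indeed a path), so that vanishing of an eigenfunction on one lead necessarily spreads to the whole line; on a graph containing a cycle the same argument stalls, which is why (v) is strictly stronger than (iv) only in the present one-dimensional situation.
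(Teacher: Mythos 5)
Your proposal is correct and follows essentially the same route as the paper: (i)--(iv) are obtained by viewing $\mathbb{R}$ with the interaction points as a path graph with two leads and $N$ finite edges and invoking Theorem \ref{sc_spec}, and (v) is proved by killing the eigenfunction on a lead via Corollary \ref{corof 3.4} and then propagating the vanishing Cauchy data through the vertices by uniqueness for the initial value problem. The only (immaterial) difference is that the paper starts the propagation from both leads simultaneously, while you use only the right lead and sweep leftward across $x_0$.
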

\begin{proof}
(i)-(iv).  The line $\mathbb{R}$ together with points $X$ forms the graph $\mathcal{G}=(\mathcal{V},\mathcal{E})$, where $\mathcal{V}:=X$, and the set $\mathcal{E}$
consists of $N$ finite edges $(x_0,x_1)$, $\ldots$, $(x_{N-1}, x_N)$ and two leads: $(-\infty,x_0)$, $(x_N,\infty)$.
Since $Q(\cdot)$ is summable, the respective quantum graph
meets the conditions of Theorem \ref{sc_spec},
hence the required conclusions hold.

(v). Assuming the contrary we find $\lambda_0> 0$ and $f\in L^2(\mathcal{G};\mathbb{C}^m)$ satisfying
${\bf H}_{X,\alpha,Q}f= \lambda_0 f$. By Corollary \ref{corof 3.4},  $f=0$ on the leads $(-\infty,x_0)$ and $(x_N,\infty)$.
Therefore $f(x_0)=f'(x_0)=f(x_N)=f'(x_N)=0.$ Applying the Cauchy uniqueness theorem step by step
we obtain that $f=0$ on all  edges
$(x_0,x_1)$, $\ldots$, $(x_{N-1}, x_N)$, i.e. $f=0$ on $\mathcal{G}.$ Thus, $\lambda_0\notin\sigma_p({\bf H}_{X,\alpha,Q}).$
    \end{proof}

\begin{remark}

\item[\;\;\rm(i)] Proposition \ref{newPropforR} is also valid for realizations of $\mathcal{A}_{X,\alpha,Q}$ in $L^2(\mathbb{R}_+;\mathbb{C}^{m})$  
including the Hamiltonian ${\bf H}_{X,\alpha,Q}$  (cf. \cite[Theorem 4.6]{GMNP17});

\item[\;\;\rm(ii)] In the case $\alpha_1=\ldots=\alpha_N=0$ each realization in $L^2(\mathbb{R}_+;\mathbb{C}^{m})$
of Sturm-Liouville expression
with a potential matrix $Q\in L^1(\mathbb{R}_+;\mathbb{C}^{m\times m})$ has the purely
absolutely continuous spectrum of the  multiplicity $m$ (see {\cite{GMNP17}}).
In turn, this result generalizes the classical Titchmarsh's result for the scalar
operator associated in $L^2(\mathbb{R}_+)$ with the expression
$\mathcal{A}:=-\frac{d^2}{dx^2}+q$, where $q=\overline{q}\in L^1(\mathbb{R}_+)$ (see
Chapter 5 in \cite{Tit1946}).
\end{remark}

\section{Bargmann type estimates}\label{sect.BargEst}

Here we evaluate the number of negative squares  of the operator ${\bf H}_{\alpha, Q}$
given  by \eqref{deltop}  assuming in addition that $xQ \in L^1(\mathcal{G};
\mathbb{C}^{m\times m})$.  To this end we recall that $W^{1,2}(\mathcal{G}\setminus
\mathcal{V};\mathbb{C}^m) = \oplus_{e\in \mathcal{E}}W^{1,2}(e;\mathbb{C}^m)$.

     \subsection{Quadratic form}
Alongside  the operator ${\bf H}_{\alpha, Q}$ we consider the form
  \begin{equation}\label{eq:main_quadr_form}
\mathfrak{t}_{\alpha, Q}[f] = \int_{\mathcal{G}}|f'(x)|^2\,dx +
\int_{\mathcal{G}}\left<Qf,f\right>\,dx
+ \sum_{v\in \mathcal{V}}\left<\alpha(v)f(v),f(v)\right>,
  \end{equation}
on the domain
\begin{equation}\label{eq:dom_of_main_form}
\begin{split}
& \dom\mathfrak{t}_{\alpha, Q} = W^{1,2}(\mathcal{G};\mathbb{C}^m) \\
& :=  \left\{f\in W^{1,2}(\mathcal{G}\setminus \mathcal{V};\mathbb{C}^m): f\quad \text{is continuous at each}\quad v\in \mathcal{V}  \right\}.
\end{split}
\end{equation}
We show that this form is well defined and is the closure of the form
${\mathfrak{t}'}_{\alpha, Q}$,  ${\mathfrak{t}'}_{\alpha, Q}[f] := ({\bf H}_{\alpha,
Q}f,f)$, $\dom {\mathfrak{t}'}_{\alpha, Q} = \dom {\bf H}_{\alpha, Q}$, naturally
generated by ${\bf H}_{\alpha, Q}$.
\begin{proposition}\label{lem_form_t_H_alpha,Q}
Let  ${\bf H}_{\alpha, Q} = {\bf H}_{\alpha, Q}^*$  be the Hamiltonian  given by
 \eqref{deltop}  and let $Q \in L^1(\mathcal{G}; \mathbb{C}^{m\times m})$. Let also $\mathfrak{t}_{\alpha, Q}$
be  the form given by \eqref{eq:main_quadr_form}--\eqref{eq:dom_of_main_form}.
 Then:

\item[\;\;\rm(i)] The form $\mathfrak{t}_{\alpha, Q}$  is semibounded below  and closed;

\item[\;\;\rm(ii)]  $\dom{\bf H}_{\alpha, Q} \subset W^{1,2}(\mathcal{G};\mathbb{C}^m)$
and
  \begin{equation}\label{eq:frak_t'=frak_t_on_dom_H}
{\mathfrak{t}'}_{\alpha, Q}[f,g] = ({\bf H}_{\alpha, Q}f,g) = \mathfrak{t}_{\alpha,
Q}[f,g], \quad   f\in \dom{\bf H}_{\alpha, Q}, \ g\in 
W^{1,2}(\mathcal{G};\mathbb{C}^m).
    \end{equation}

\item[\;\;\rm(iii)] The operator  associated with the form \eqref{eq:main_quadr_form} in accordance
with the first representation theorem  coincides with the Hamiltonian  ${\bf H}_{\alpha,
Q} = {\bf H}_{\alpha, Q}^*$;

\item[\;\;\rm(iv)] The domain $\dom{\bf H}_{\alpha, Q}$ is the core for $\mathfrak{t}_{\alpha, Q}$,
i.e. the  closure of ${\mathfrak{t}'}_{\alpha, Q}$ is ${\mathfrak{t}}_{\alpha, Q}$,
$\overline{\mathfrak{t}'}_{\alpha, Q} = \mathfrak{t}_{\alpha, Q}$. In particular, for
any $\alpha: \mathcal{V}\rightarrow\mathbb{C}^{m\times m}$ and $Q \in L^1(\mathcal{G}; \mathbb{C}^{m\times m})$   the domain $\dom{\bf
H}_{\alpha, Q}$ is dense in $W^{1,2}(\mathcal{G};\mathbb{C}^m)$.
  \end{proposition}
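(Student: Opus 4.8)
The plan is to treat the four assertions in order, relying on the KLMN theorem for (i), on edgewise integration by parts for (ii), on the first representation theorem for (iii), and on the standard fact that the operator domain is a form core for (iv).

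First I would show that the perturbing forms $f\mapsto\int_{\mathcal G}\langle Qf,f\rangle\,dx$ and $f\mapsto\sum_{v\in\mathcal V}\langle\alpha(v)f(v),f(v)\rangle$ are infinitesimally bounded with respect to the closed nonnegative form $\mathfrak t_{0,0}[f]=\int_{\mathcal G}|f'(x)|^2\,dx$, whose domain is $W^{1,2}(\mathcal G;\mathbb C^m)$ by \eqref{eq:dom_of_main_form}. The only analytic input is the one-dimensional Sobolev inequality on each edge: for every $\varepsilon>0$ and every $e\in\mathcal E$ there is $C_{\varepsilon,e}$ with $\|f_e\|_{L^\infty(e)}^2\le\varepsilon\|f_e'\|_{L^2(e)}^2+C_{\varepsilon,e}\|f_e\|_{L^2(e)}^2$ (the usual estimate on a bounded interval, the standard half-line estimate on a lead). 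Hence $\bigl|\int_e\langle Q_ef_e,f_e\rangle\,dx\bigr|\le\|Q_e\|_{L^1}\|f_e\|_{L^\infty(e)}^2$ and $|\langle\alpha(v)f(v),f(v)\rangle|\le\|\alpha(v)\|\,\|f_e\|_{L^\infty(e)}^2$ for any $e\in\mathcal E_v$; summing over the finitely many edges and vertices gives a bound of the form $\varepsilon'\mathfrak t_{0,0}[f]+C'_{\varepsilon'}\|f\|^2$ with $\varepsilon'$ arbitrarily small. Since $Q=Q^*$ and $\alpha(v)=\alpha(v)^*$ these perturbing forms are symmetric, so the KLMN theorem shows that $\mathfrak t_{\alpha,Q}$ is semibounded from below and closed on $W^{1,2}(\mathcal G;\mathbb C^m)$ (which is complete in the norm $\|f'\|^2+\|f\|^2$), proving (i).

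For (ii) I would first note that $\dom(A_{e,\max})\subset W^{1,2}(e;\mathbb C^m)$ on every edge: on a finite edge this is contained in \eqref{dom_H_max}, and on a lead it follows from \cite{GMNP17} (equivalently from the exponential decay of the Jost solutions spanning $\mathcal N_{\pm i}$, together with the relative boundedness of the previous paragraph applied to the minimal operator). Since elements of $\dom{\bf H}_{\alpha,Q}$ are in addition continuous at every vertex by the first line of \eqref{delt}, this gives $\dom{\bf H}_{\alpha,Q}\subset W^{1,2}(\mathcal G;\mathbb C^m)=\dom\mathfrak t_{\alpha,Q}$. Then for $f\in\dom{\bf H}_{\alpha,Q}$ and $g\in W^{1,2}(\mathcal G;\mathbb C^m)$ I would integrate by parts on each edge, $\int_e\langle-f_e''+Q_ef_e,g_e\rangle\,dx=\int_e\langle f_e',g_e'\rangle\,dx+\int_e\langle Q_ef_e,g_e\rangle\,dx+(\text{endpoint terms})$; on a lead the contribution at infinity vanishes because $x\mapsto\langle f_e'(x),g_e(x)\rangle$ lies in $L^1$ and is absolutely continuous with derivative in $L^1$, hence has limit $0$ at $+\infty$. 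Grouping the remaining endpoint terms by vertex and using the outgoing-derivative convention together with the continuity of $g$ at each $v$, they collapse to $\bigl\langle\sum_{e\in E_v}f_e'(v),g(v)\bigr\rangle=\langle\alpha(v)f(v),g(v)\rangle$ by the second line of \eqref{delt}. Summation over $\mathcal E$ produces exactly \eqref{eq:frak_t'=frak_t_on_dom_H}. The genuinely delicate point of the whole argument is this bookkeeping — keeping track of the signs induced by the orientation of the finite edges versus the leads so that each vertex contributes precisely $\langle\alpha(v)f(v),g(v)\rangle$ and nothing else — and the justification that the boundary term at the infinite ends drops out.

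Parts (iii) and (iv) are then formal. By (i) the first representation theorem associates with $\mathfrak t_{\alpha,Q}$ a unique semibounded self-adjoint operator $H_{\mathfrak t}$, and (ii) says precisely that each $f\in\dom{\bf H}_{\alpha,Q}$ belongs to $\dom\mathfrak t_{\alpha,Q}$ and satisfies $\mathfrak t_{\alpha,Q}[f,g]=({\bf H}_{\alpha,Q}f,g)$ for all $g\in\dom\mathfrak t_{\alpha,Q}$; by the definition of $H_{\mathfrak t}$ this gives ${\bf H}_{\alpha,Q}\subseteq H_{\mathfrak t}$, and since ${\bf H}_{\alpha,Q}$ is self-adjoint (as established after \eqref{deltop}) and $H_{\mathfrak t}$ is self-adjoint, they coincide, which is (iii). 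Finally, for a densely defined closed semibounded form the domain of the associated operator is always a form core; applying this with $H_{\mathfrak t}={\bf H}_{\alpha,Q}$ shows that $\dom{\bf H}_{\alpha,Q}$ is a core for $\mathfrak t_{\alpha,Q}$, i.e. $\overline{\mathfrak t'_{\alpha,Q}}=\mathfrak t_{\alpha,Q}$, and in particular $\dom{\bf H}_{\alpha,Q}$ is dense in $\dom\mathfrak t_{\alpha,Q}=W^{1,2}(\mathcal G;\mathbb C^m)$ in the form norm, hence in $W^{1,2}(\mathcal G;\mathbb C^m)$, establishing (iv).
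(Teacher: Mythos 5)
Your proposal is correct and follows essentially the same route as the paper: Sobolev-embedding estimates plus the KLMN theorem for (i), edgewise integration by parts with the outgoing-derivative bookkeeping and the $\delta$-condition for (ii), and the first representation theorem for (iii)--(iv). The only (harmless) deviation is your justification that the boundary term vanishes at the infinite end of a lead — you argue directly that $\langle f_e',g_e\rangle$ is an $L^1$ function with $L^1$ derivative, whereas the paper invokes the vanishing of the Wronskian coming from the minimal deficiency indices; both are valid.
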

  \begin{proof}
(i).  First  we show that the form  \eqref{eq:main_quadr_form}  is semibounded below and
closed. To this end we note that   in accordance with the Sobolev embedding theorem
$W^{1,2}(e;\Bbb C^m) \hookrightarrow C(e;\Bbb C^m)$ and for any $\varepsilon>0$ there
exists $C_1:=C_1(\varepsilon)>0$ such that on each edge $e\in \mathcal{E}$ (finite or infinite)
the following estimate holds
  \begin{equation}\label{embed_theorem}
\|f_e\|^2_{C(e;\mathbb{C}^m)}\le \varepsilon\|f'_e\|^2_{L^2(e;\mathbb{C}^m)} + \varepsilon^{-1}C_1
\|f_e\|^2_{L^2(e;\mathbb{C}^m)}, \quad f_e\in W^{1,2}(e;\Bbb C^m).
  \end{equation}
Here  $C(e;\mathbb{C}^m)$ denotes the Banach  space of continuous vector-functions on the edge $e$.
Since $Q_e\in L^{1}(e;\Bbb C^{m\times m})$, it follows from \eqref{embed_theorem} that
for any $f\in W^{1,2}(\mathcal{G}\setminus \mathcal{V}; \Bbb C^{m})$
   \begin{eqnarray}\label{estimate_for_Q}
 \left|\int_{e}  \left<Q_e(x)f_e(x),f_e(x)\right>\,dx\right|
 \le\int_{e}|Q_e(x)|\cdot|f_e(x)|^2\,dx  \nonumber  \\
\le  \|Q_e\|_{L^1(e)}\cdot\|f_e\|^2_{C(e)} \le
\|Q_e\|_{L^1(e)}\left(\varepsilon\|f'_e\|^2_{L^2(e)} + \varepsilon^{-1}C_1
\|f_e\|^2_{L^2(e)}\right).
  \end{eqnarray}
Here $|Q_e(x)| := |Q_e(x)|_{\Bbb C^{m\times m}}$ denotes the matrix norm, and it is  put for brevity $C(e):=C(e;\mathbb{C}^m)$,
$\|Q_e\|_{L^1(e)} := \|Q_e\|_{L^1(e; \Bbb C^{m\times m})}$.

Similarly, one gets from  \eqref{embed_theorem}   that for any $f\in
W^{1,2}(\mathcal{G}; \Bbb C^{m})$
   \begin{eqnarray}\label{estimate_for_alpha}
\left|\sum_{v\in \mathcal{V}}\left<\alpha(v)f(v),f(v)\right>\right| \le \sum_{v\in \mathcal{V}}
|\alpha(v)|\cdot|f(v)|^2 \nonumber \\
\le C_2({\alpha}) \sum_{e\in \mathcal{E}}\left(\varepsilon\|f'_e\|^2_{L^2(e)} + \varepsilon^{-1}C_1 \|f_e\|^2_{L^2(e)}\right),
  \end{eqnarray}
where $C_2({\alpha}) := \max \{|\alpha(v)|: v\in \mathcal{V}\}$.

To proceed further we treat  the form $\mathfrak{t}_{\alpha,Q}$   as a
perturbation of the simplest (unperturbed) form $\mathfrak{t}_{0,0}$ given by
$$
 \mathfrak{t}_{0,0}[f] = \int_{\mathcal{G}}\left|f'(x)\right|^2\,dx = \sum_{e\in \mathcal{E}} \int_{e}\left|f'_e(x)\right|^2\,dx, \qquad
\dom\mathfrak{t}_{0,0} = W^{1,2}(\mathcal{G};\mathbb{C}^{m}).
$$
With this aim  we consider form perturbations  $\frak t_q$ and  $\frak t_{\alpha}$ given
by the second and third summands in \eqref{eq:main_quadr_form}  on the domain
$W^{1,2}(\mathcal{G};\mathbb{C}^{m})$. Note that  although the form $\frak
t_q$ is naturally defined on a wider domain $W^{1,2}(\mathcal{G}\setminus \mathcal{V};\mathbb{C}^{m})$, the
definition of  $\frak t_{\alpha}$ requires the continuity of functions $f$ at each
vertex $v\in \mathcal{V}$, hence cannot be extended on a wider subset of
$W^{1,2}(\mathcal{G}\setminus \mathcal{V};\mathbb{C}^{m})$.

Choosing    $\varepsilon :=
2^{-1}\left(\|Q\|_{L^1(\mathcal{G})} + C_2({\alpha})\right)^{-1}$ and  combining estimates
\eqref{estimate_for_Q}  and  \eqref{estimate_for_alpha}  with this $\varepsilon$ we arrive at the following
estimate  for each
$f\in W^{1,2}(\mathcal{G}; \mathbb{C}^{m})$
    \begin{align}\label{eq:KLMN_estimate}
|\frak t_q[f] + \frak t_{\alpha}[f]| &= \left|\sum_{e\in \mathcal{E}}\int_{e}
\left<Q_e(x)f_e(x),f_e(x)\right>\,dx  +  \sum_{v\in \mathcal{V}}\left<\alpha(v)f(v),f(v)\right>\right| \nonumber   \\
\le & \|Q\|_{L^1(\mathcal{G})} \left(\sum_{e\in \mathcal{E}}
\varepsilon\|f'_e\|^2_{L^2(e;\mathbb{C}^m)} + \varepsilon^{-1}C_1 \|f_e\|^2_{L^2(e;\mathbb{C}^m)}\right) \nonumber  \\
   & + C_2({\alpha}) \sum_{e\in
\mathcal{E}}\left(\varepsilon\|f'_e\|^2_{L^2(e;\mathbb{C}^m)} + \varepsilon^{-1}C_1 \|f_e\|^2_{L^2(e;\mathbb{C}^m)}\right) \nonumber \\
& =  2^{-1}\sum_{e\in \mathcal{E}} \left(\|f'_e\|^2_{L^2(e;\mathbb{C}^m)} + 4C_3 \|f_e\|^2_{L^2(e;\mathbb{C}^m)}\right)
    \end{align}
with $C_3 = C_1 (\|Q\|_{L^1({\mathcal{G}}, \Bbb C^{m\times m})} + C_2({\alpha}))^2$.  By
the KLMN theorem (\cite[Ch. 6]{Kato66}, \cite[Theorem X.17]{ReedSim78}  (see also
\cite[Theorem 3.38]{DerMal17}) the form ${\mathfrak{t}}_{\alpha,Q} := \mathfrak{t}_{0,0} +
\frak t_q + \frak t_{\alpha}$ is lower semibounded and closed on the domain
$\dom{\mathfrak{t}}_{\alpha,Q} = W^{1,2}(\mathcal{G}; \mathbb{C}^{m})$.

(ii).   The inclusion $\dom{\bf H}_{\alpha, Q} \subset
W^{1,2}(\mathcal{G};\mathbb{C}^m)$ is obvious. Let us prove identity
\eqref{eq:frak_t'=frak_t_on_dom_H}. Let  $l\in \mathcal{E}_{v,\infty}$  be  a lead outgoing from
the vertex $v$.
Since $Q_l\in L^1(l; \mathbb{C}^{m\times m})$,
the deficiency indices of the minimal Schr\"odinger operator  $A_{{l},\min}$  in $L^2(l;
\mathbb{C}^{m})$ are minimal, $\mathrm{n}_{\pm}(A_{{l},\min}) = m$.
It follows that the Wronskian
$\left<f_l(x),g'_l(x)\right>-\left<f'_l(x),g_l(x)\right>$ tends to zero as $x\rightarrow\infty, x\in l$.
Therefore integrating by parts along $l$ and taking the last relation into account
we get that for any $f_l\in\dom(A_{l,\max})$ and $g_l\in W^{1,2}(l;\mathbb{C}^m)$
\begin{equation}\label{ibplead}
\int_{l}\left(-\left<f''_l,g_l\right>+\left<Q_lf_l,g_l\right>\right)dx =
\int_{l}\left(f'_l \overline{g}'_l + \left<Q_lf_l,g_l\right>\right)dx +
\left<f'_l(v),g_l(v)\right>.
\end{equation}

Let further $e_k=\overrightarrow{vv_k}\in \mathcal{E}_{v,\text{fin}}$  be any finite edge and let $f:=f_{e_k}\in\dom(A_{e_k, \max})$.
Since the derivatives at each vertex are \textbf{outgoing},
we have
$f'_{\overrightarrow{vv_k}}(v_k)=-f'_{\overrightarrow{v_kv}}(v_k)$.
Therefore  integrating by
parts along the edge $e_k=\overrightarrow{vv_k}$,
we derive:
\begin{equation}\label{impth-2}
\begin{gathered}
\int_{e_k}\left(-\left<f'',g\right>+\left<Q_kf,g\right>\right)dx = \int_{e_k}\left(f'\overline{g}' + \left<Q_kf,g\right>\right)dx\\
-\left<f'_{\overrightarrow{vv_k}}(v_k), g(v_k)\right>+\left<f'_{\overrightarrow{vv_k}}(v), g(v)\right> =\\
=\int_{e_k}\left(f'\overline{g}' + \left<Q_kf,g\right>\right)dx +
\left<f'_{\overrightarrow{v_kv}}(v_k), g(v_k)\right> +
\left<f'_{\overrightarrow{vv_k}}(v), g(v)\right>.
\end{gathered}
\end{equation}

Taking the sum over all leads and edges $e\in \mathcal{E}$,
using definition \eqref{eq:main_quadr_form} of the form
$\mathfrak{t}_{{\alpha,Q}}$,  and taking  identities
\eqref{ibplead}-\eqref{impth-2} and condition \eqref{delt} into  account, we obtain
 that for all $f\in\dom({\bf H}_{\alpha,Q})$ and $g\in W^{1,2}(\mathcal{G};\mathbb{C}^m)$
  \begin{equation}\label{quadform}
\begin{gathered}
\mathfrak{t}'_{{\alpha,Q}}[f,g] = \sum_{e\in \mathcal{E}}\int_{e}\left(-\left<f''_e,g_e\right>+\left<Q_ef_e,g_e\right>
\right)dx\\
=\sum_{e\in \mathcal{E}}\int_{e}\left(f'_e(x)\overline{g'_e(x)}+\left<Q_e(x)f_e(x),g_e(x)\right>\right)dx \\
+\sum_{e\in \mathcal{E}_{\text{fin}}}\left(\left<f'_{-e}(v_{in}),g_e(v_{in})\right>+\left<f'_{e}(v_o),g_e(v_o)\right>\right)
+\sum_{e\in \mathcal{E}_{\infty}}\left<f'_e(v),g_e(v)\right>\\
=\int_{\mathcal{G}}f'(x)\overline{g'(x)}dx + \int_{\mathcal{G}}\left<Qf,g\right>dx +
\sum_{v\in \mathcal{V}}\left(\sum_{e\in \mathcal{E}_v}\left<f'_e(v),g_e(v)\right>\right)  \\
=\int_{\mathcal{G}}f'(x)\overline{g'(x)}dx + \int_{\mathcal{G}}\left<Qf,g\right>dx + \sum_{v\in \mathcal{V}}\left<\alpha(v)f(v), g(v)\right>
= \mathfrak{t}_{{\alpha,Q}}[f,g].
\end{gathered}
\end{equation}

This identity  coincides with \eqref{eq:frak_t'=frak_t_on_dom_H}  and proves
(ii).

(iii)-(iv).   Since the form  ${\mathfrak{t}}_{\alpha,Q}$  is closed (see (i)),
 the first representation theorem (\cite[Theorem VI.2.1.(ii)]{Kato66}),
 ensures that the domain  $\dom {\bf H}_{\alpha, Q}$ is the core of the form
${\mathfrak{t}}_{\alpha,Q}$ which proves (iv).

 Moreover, this theorem implies when combining (i) with identity \eqref{eq:frak_t'=frak_t_on_dom_H}
 that the operator ${\bf H}_{\alpha, Q} = {\bf H}_{\alpha, Q}^*$ is associated
with the (closed) form  ${\mathfrak{t}}_{\alpha,Q}$. This proves  the statement (iii).
   \end{proof}

  \subsection{Classical Bargmann estimate}
First slightly modifying reasonings from \cite{Malamud1992}  we present an abstract
version of the Birman-Schwinger principle.

As usual,   for any  operator $T = T^*\in \cC(\frak H)$ we denote by
$\kappa_{\mp}(T):=\dim E_T(\Bbb R_{\mp}),$
the dimension of the "negative/positive" spectral subspace.
  \begin{proposition}\label{prop_B-Schwinger_prin}
Let $A_0$ be a nonnegative selfadjoint  operator in $\frak H$, $K\in\mathcal B(\frak
H)$, and let $A_1 = A_0 - K^* K$. Let also $\cH = \overline {\ran(K)} = (\ker
K^*)^{\perp}$ and let
  \begin{equation}\label{Weyl_func_for_pair_A_0,A_1}
M(z) := -I_{\cH} + K(A_0-z)^{-1}K^*,\qquad z\in\rho(A_0),
  \end{equation}
be the operator function with values in $\cB(\cH)$ and $K^*(\in \cB(\cH, \frak H))$ is
treated as an operator from $\cH$  to $\frak H$ and $M(0-) := s-R-\lim_{\varepsilon
\downarrow 0}M(\varepsilon)$. Then
  \begin{equation}
\kappa_-(A_1)  = \kappa_+\bigl(M(0-)\bigr).
  \end{equation}
\end{proposition}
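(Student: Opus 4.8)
The plan is to establish the identity first with a strictly negative spectral parameter $\gl<0$, by an abstract Birman--Schwinger reduction resting on the factorization $A_1-\gl=C^*(I-T^*T)C$, and then to let $\gl\uparrow 0$ monotonically (note $(-\infty,0)\subseteq\rho(A_0)$, so $M(0-)$ is the limit of $M(-\e)$ as $\e\downarrow 0$).

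Fix $\gl<0$. Since $A_0\ge0$ we have $A_0-\gl\ge|\gl|>0$, so $C:=(A_0-\gl)^{1/2}$ is positive self-adjoint with bounded inverse $C^{-1}=(A_0-\gl)^{-1/2}\in\mathcal{B}(\gotH)$, and $C$ maps $\dom(A_0^{1/2})$ bijectively onto $\gotH$. Put $T:=KC^{-1}\in\mathcal{B}(\gotH)$, so $TC=K$ on $\dom(A_0^{1/2})$. As $K^*K$ is bounded, $A_1=A_0-K^*K$ is semibounded with form domain $\dom(A_0^{1/2})$, and for $f\in\dom(A_0^{1/2})$ the quadratic form of $A_1-\gl$ equals $\|Cf\|^2-\|Kf\|^2=((I-T^*T)Cf,Cf)$. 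By Glazman's lemma, $\kappa_-(A_1-\gl)=\dim E_{A_1-\gl}((-\infty,0))$ is the supremum of $\dim\mathcal L$ over subspaces $\mathcal L\subseteq\dom(A_0^{1/2})$ on which this form is negative definite; substituting $g=Cf$ and using that $C$ is a bijection, this supremum equals $\dim E_{I-T^*T}((-\infty,0))=\dim E_{T^*T}((1,\infty))$. By the polar decomposition of $T$, the operators $T^*T$ and $TT^*$ are unitarily equivalent on the orthogonal complements of their kernels, hence $\dim E_{T^*T}((1,\infty))=\dim E_{TT^*}((1,\infty))$. Finally $TT^*=K(A_0-\gl)^{-1}K^*$ annihilates $\ker K^*=\gotH\ominus\cH$, leaves $\cH=\overline{\ran K}$ invariant, and by the very definition of $M$ its restriction to $\cH$ is $I_\cH+M(\gl)$; therefore $\dim E_{TT^*}((1,\infty))=\dim E_{M(\gl)}((0,\infty))=\kappa_+(M(\gl))$. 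Altogether
\begin{equation*}
\kappa_-(A_1-\gl)=\kappa_+(M(\gl)),\qquad \gl<0 .
\end{equation*}

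Now let $\gl\uparrow0$. The left side $\kappa_-(A_1-\gl)=\dim E_{A_1}((-\infty,\gl))$ is non-decreasing and converges to $\dim E_{A_1}((-\infty,0))=\kappa_-(A_1)$. On the right side write $\gl=-\e$: since $\e\mapsto(A_0+\e)^{-1}$ is operator-monotone decreasing, the bounded self-adjoint operators $M(-\e)=-I_\cH+K(A_0+\e)^{-1}K^*$ form a monotone non-decreasing family as $\e\downarrow0$, and by the monotone-convergence theorem for quadratic forms they converge in the strong resolvent sense to the (possibly unbounded) self-adjoint operator $M(0-)$. Combining Glazman's lemma for $M(0-)$ with the fact that on any fixed finite-dimensional subspace the monotone convergence of the associated quadratic forms is uniform on the unit sphere (Dini), one obtains $\kappa_+(M(-\e))\uparrow\kappa_+(M(0-))$. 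Passing to the limit in the displayed identity yields $\kappa_-(A_1)=\kappa_+(M(0-))$.

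I expect the only delicate point to be this last passage to the limit: because $0$ may belong to $\sigma(A_0)$, the operator $M(0-)$ need not be bounded (its form, $f\mapsto\lim_{\e\downarrow0}\|(A_0+\e)^{-1/2}K^*f\|^2-\|f\|^2$, may be finite only on a proper subspace of $\cH$), so $\kappa_+(M(0-))=\lim_{\e\downarrow0}\kappa_+(M(-\e))$ cannot be read off from norm or even strong convergence of the $M(-\e)$ and must be argued via monotone convergence of the forms. The two inequalities there use, respectively, that a vector on which the form of $M(0-)+I_\cH$ exceeds $\|\cdot\|^2$ has nontrivial component in $E_{M(0-)}((0,\infty))$, and that a finite-dimensional subspace on which that form is $\ge(1+\delta)\|\cdot\|^2$ may be taken inside a \emph{bounded} spectral subspace of $M(0-)$, where the approximation by the forms of the $M(-\e)$ is uniform. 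Everything else is a routine consequence of the factorization at $\gl<0$.
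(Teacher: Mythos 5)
Your proof is correct, but it takes a genuinely different route from the paper's. The paper argues eigenvalue by eigenvalue: for each negative eigenvalue $\lambda_j$ of $A_1$ it uses the classical Birman--Schwinger manipulation ($h=(A_0-\lambda_j)^{-1}K^*Kh$ together with $\dim\ker(ST-I)=\dim\ker(TS-I)$) to identify $\dim\ker(A_1-\lambda_j)$ with $\dim\ker M(\lambda_j)$, and then delegates the conversion of $\sum_j\dim\ker M(\lambda_j)$ into $\kappa_+\bigl(M(0-)\bigr)$ to a gap-counting result for Weyl functions (\cite{DerMal91}, \cite[Lemma 8.69]{DerMal17}), where the monotonicity of $\lambda\mapsto M(\lambda)$ on the gap $(-\infty,0)$ is hidden. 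You instead prove, for each fixed $\lambda<0$, the stronger pointwise identity $\dim E_{A_1}(-\infty,\lambda)=\kappa_+\bigl(M(\lambda)\bigr)$ via the factorization $A_1-\lambda=C^*(I-T^*T)C$, Glazman's lemma, and the unitary equivalence of $T^*T$ and $TT^*$ off their kernels, and only then let $\lambda\uparrow0$ using monotonicity on both sides. Your version is self-contained, does not tacitly assume that the negative spectrum of $A_1$ is a discrete sequence of eigenvalues (as the paper's opening sentence does), and yields exactly the fixed-$\varepsilon$ identity $\dim E_{{\bf H}_D}(-\infty,-\varepsilon)=\kappa_+\bigl(M(-\varepsilon)\bigr)$ that the paper actually invokes in the proof of Proposition \ref{prop:matrix_Barg_estim-for_S-L_oper}. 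The price is the limit $\varepsilon\downarrow0$, which you correctly identify as the delicate step: since $0$ may lie in $\sigma(A_0)$, the strong-resolvent limit $M(0-)$ can be unbounded, and if the limit form is $+\infty$ on some vectors one must count the multivalued part of $M(0-)$ toward $\kappa_+$ --- a convention the paper's statement leaves implicit as well; your Dini/monotone-form argument handles the operator part correctly, and in the paper's application $M(0-)$ is a trace-class perturbation of $-I_{\cH}$, so the corner case does not arise.
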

\begin{proof}
Let $\{\lambda_j\}^N_1$ be the sequence of negative eigenvalues of the operator $A_1$.
Fixing  $\lambda_j\in\sigma_p(A_1)\cap\Bbb R_-$ one has
$0 = (A_1-\lambda_j I)h = (A_0-\lambda_j I)h - K^*K h$ with some  vector $h\in \cH$.
This equality is equivalent to
$h = (A_0-\lambda_j)^{-1}K^* K h,$
i.e.
$ 1\in \sigma_p\bigl((A_0-\lambda_j)^{-1}K^* K\bigr) =
\sigma_p\bigl(K(A_0-\lambda_j)^{-1}K^*\bigr). $
Moreover (see, \cite[Ch. 3.10]{BirSol1987}),
  $$
\dim\ker\bigl((A_0-\lambda_j)^{-1}K^* K - I\bigr) =
\dim\ker\bigl(K(A_0-\lambda_j)^{-1}K^* - I\bigr).
$$
In turn, this equality  is equivalent to the relation $\dim\ker(A_1 - \lambda_j I) =
\dim\ker M(\lambda_j)\not =\{0\}$. Applying \cite{DerMal91} (see also \cite[Lemma 8.69]{DerMal17}) one gets
$$
\kappa_-(A_1) = \sum_j\dim\ker(A_1 - \lambda_j I) = \sum_j\dim\ker M(\lambda_j) =
\kappa_+\bigl(M(0-)\bigr).
$$
This proves the statement.
  \end{proof}
Next we present the matrix version of the classical Bargmann estimate. It is well known
in  the scalar case and can be proved similarly. We present the proof for the sake of
completeness following  \cite{Malamud1992}
(other approaches can be found in \cite{Birman1961}, \cite{BerShub91}, and
\cite{AlbKosMalNei13}).

  Denote by  $P_{\pm}(x) = E_{Q(x)}$\   $x\in \Bbb R_+$,  the spectral projection of
the matrix $Q(\cdot) (\in C(\Bbb R_+))$ on the positive/negative  part of its spectrum
and put $Q_{\pm}(x) := \pm P_{\pm}(x)Q(x)$. Clearly, $Q_{\pm}(x)\ge 0$ and $Q(x)=
Q_{+}(x) - Q_{-}(x)$.
%
   \begin{proposition}\label{prop:matrix_Barg_estim-for_S-L_oper}
Let $Q = Q^*$ be continuous and  $xQ\in L^1(\Bbb R_+; \mathbb{C}^{m\times m}),$ and let
${\bf H}_D:={\bf H}_{D,Q}$ be the Dirichlet realization of $-d^2/dx^2 + Q$. Then the following
estimate holds
  \begin{equation}\label{Barg_type_est_for_matrix_Dir_oper}
\kappa_{-}({\bf H}_D)\leq  \int_{\Bbb R_+}x \cdot {\tr}(Q_{-}(x))\,dx.
  \end{equation}
   \end{proposition}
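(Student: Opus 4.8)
The plan is to combine the abstract Birman--Schwinger principle of Proposition~\ref{prop_B-Schwinger_prin} with a trace estimate for the resulting Birman--Schwinger operator, whose integral kernel is built from the elementary Green's function of the half-line Dirichlet Laplacian. First I would reduce to the case $Q=-Q_{-}$: writing $Q=Q_{+}-Q_{-}$ with $Q_{\pm}\ge 0$ and comparing the associated quadratic forms on the common form domain $\{f\in W^{1,2}(\R_+;\C^m):f(0)=0\}$, one has $\langle Qf,f\rangle\ge -\langle Q_{-}f,f\rangle$, hence ${\bf H}_{D,Q}\ge {\bf H}_{D,-Q_{-}}$. Since $Q_{-}\in L^1(\R_+;\C^{m\times m})$, the form $\langle Q_{-}f,f\rangle$ is infinitesimally form-bounded with respect to $\|f'\|^2$ (cf. the Sobolev estimate \eqref{embed_theorem}), so ${\bf H}_{D,-Q_{-}}$ is self-adjoint and bounded below with $\sigma_{\mathrm{ess}}({\bf H}_{D,-Q_{-}})=[0,\infty)$. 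By the min--max principle $\kappa_{-}({\bf H}_{D,Q})\le\kappa_{-}({\bf H}_{D,-Q_{-}})$, so it remains to bound the right-hand side.

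Next I would apply Proposition~\ref{prop_B-Schwinger_prin} with $A_0:={\bf H}_{D,0}=-d^2/dx^2\ge 0$ (Dirichlet at $0$) and $K:=Q_{-}^{1/2}$, the multiplication operator by the pointwise nonnegative matrix square root of $Q_{-}(\cdot)$, which is bounded since $Q$ is continuous. Then $A_1:=A_0-K^*K={\bf H}_{D,-Q_{-}}$, and the proposition gives $\kappa_{-}({\bf H}_{D,-Q_{-}})=\kappa_{+}(M(0-))$ with $M(z)=-I+Q_{-}^{1/2}(A_0-z)^{-1}Q_{-}^{1/2}$. The resolvent $(A_0+\e)^{-1}$ has the scalar Green's function $G_\e(x,y)=\e^{-1/2}\sinh(\sqrt{\e}\,(x\wedge y))\,e^{-\sqrt{\e}\,(x\vee y)}$, and one checks the pointwise bounds $0\le G_\e(x,y)\le x\wedge y\le\sqrt{xy}$ together with $G_\e(x,y)\to x\wedge y$ as $\e\downarrow0$. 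Hence the kernels $Q_{-}^{1/2}(x)G_\e(x,y)Q_{-}^{1/2}(y)$ are dominated in Hilbert--Schmidt norm by $|Q_{-}(x)|^{1/2}\sqrt{xy}\,|Q_{-}(y)|^{1/2}$, whose squared Hilbert--Schmidt norm is $\bigl(\int_0^\infty x|Q_{-}(x)|\,dx\bigr)^2<\infty$ by the hypothesis $xQ\in L^1$; by dominated convergence $M(0-)=-I+B$, where $B$ is the integral operator on $L^2(\R_+;\C^m)$ with matrix kernel $Q_{-}^{1/2}(x)(x\wedge y)Q_{-}^{1/2}(y)$.

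Finally I would estimate $\kappa_{+}(-I+B)$ by the trace of $B$. Since $x\wedge y=\int_0^\infty \mathbf 1_{[0,x]}(t)\mathbf 1_{[0,y]}(t)\,dt$ is a positive-definite kernel, $B\ge 0$; moreover its trace is $\int_0^\infty \tr_{\C^m}\!\bigl(Q_{-}^{1/2}(x)\,x\,Q_{-}^{1/2}(x)\bigr)\,dx=\int_0^\infty x\,\tr(Q_{-}(x))\,dx<\infty$, so $B$ is trace class. Denoting by $\mu_1\ge\mu_2\ge\cdots\ge0$ the eigenvalues of $B$, we have $\kappa_{+}(-I+B)=\#\{j:\mu_j>1\}$, hence $\#\{j:\mu_j>1\}\le\sum_{j:\mu_j>1}\mu_j\le\sum_j\mu_j=\tr(B)=\int_0^\infty x\,\tr(Q_{-}(x))\,dx$. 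Chaining this with the first two paragraphs yields $\kappa_{-}({\bf H}_D)\le\int_{\R_+}x\,\tr(Q_{-}(x))\,dx$.

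The only genuinely nontrivial point is the limiting-absorption step: identifying $M(0-)$ with $-I+B$ in the strong resolvent sense required by Proposition~\ref{prop_B-Schwinger_prin}, and establishing the trace-class property and the exact value of $\tr(B)$. This is precisely where the stronger hypothesis $xQ\in L^1$, rather than merely $Q\in L^1$, enters, through the bound $x\wedge y\le\sqrt{xy}$; the remaining manipulations (the form comparison in Step 1, the monotonicity/convergence estimates for $G_\e$, and the eigenvalue counting in Step 3) are routine.
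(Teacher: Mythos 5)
Your proposal is correct and follows essentially the same route as the paper: reduction to $Q=-Q_-$, the Birman--Schwinger principle of Proposition~\ref{prop_B-Schwinger_prin} applied with the Dirichlet Green's function of $-d^2/dx^2$, and the counting bound $\#\{j:\mu_j>1\}\le\sum_j\mu_j=\tr(B)=\int_0^\infty x\,\tr Q_-(x)\,dx$. The only (cosmetic) difference is the order of operations: the paper estimates $\dim E_{{\bf H}_D}(-\infty,-\varepsilon)=\kappa_+\bigl(M(-\varepsilon)\bigr)\le\tr G(-\varepsilon)\le\int_0^\infty t\,\tr Q_-(t)\,dt$ for each $\varepsilon>0$ and then lets $\varepsilon\downarrow 0$ in the resulting scalar inequality, whereas you first pass to the limit to identify $M(0-)=-I+B$ and then count eigenvalues of the limiting operator; both orderings work.
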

\begin{proof}
Since $\kappa_{-}({\bf H}_{D,Q}) \leq \kappa_{-}({\bf H}_{D,Q_-})$, it suffices to consider the
case of negative $Q= -Q_-$. Now the Weyl function \eqref{Weyl_func_for_pair_A_0,A_1}
takes the form
\begin{equation}
(I_\cH + M(\lambda))h = \int^{\infty}_0 Q_-^{1/2}(x)G(x,t;\lambda)Q^{1/2}_-(t)h(t)\, dt
=: G(\lambda)h,\quad h\in \cH,
\end{equation}
where $\cH =L^2(\supp Q_-)$, $\supp Q_- = \overline {\{x\in \Bbb R_+: Q_-(x)\not =0\}}$,
and
  \begin{equation}
G(x,t;\lambda)=\frac{1}{\sqrt{-\lambda}}
\begin{cases}
\sinh (t\sqrt{-\lambda})\cdot \exp(-x\sqrt{-\lambda})\cdot I_m,&  t\le x,\\
\sinh (x\sqrt{-\lambda})\cdot \exp(-t\sqrt{-\lambda})\cdot I_m,&  t\ge x,
\end{cases}
  \end{equation}
is the Green function of the Dirichlet realization ${\bf H}_{D,0}$ with $Q=0$ and $\lambda <
0$.

Clearly,  $G(x,t;0) := \lim_{\varepsilon\downarrow 0}G(x,t;-\varepsilon) = t$ for $t\le
x$.
Due to the assumptions on $Q$  the integral operator $G(-\varepsilon)$ is of trace class
with the continuous kernel, hence $\text{tr} G(-\varepsilon) = \int^{\infty}_0 \sinh
(t\sqrt{\varepsilon})\cdot \exp(-t\sqrt{\varepsilon}) \cdot \text{tr} Q_-(t)\,dt$. Combining
these facts with Lemma \ref{prop_B-Schwinger_prin}  and denoting by
$\{\lambda_j(-\varepsilon)\}_1^\infty$ the sequence  of (necessary positive) eigenvalues
of the operator $G(-\varepsilon)$  one gets
  \begin{eqnarray}\label{eq:bargmann_est-te_with_varepsilon}
\dim {\text E}_{{\bf H}_D}(-\infty, -\varepsilon)  = \kappa_+\bigl(M(-\varepsilon)\bigr)  =
\sum_{\lambda_j(-\varepsilon)>1} 1 \le
\sum_{\lambda_j(-\varepsilon)>1} \lambda_j(-\varepsilon) \nonumber \\
\le \sum^{\infty}_{j=1}\lambda_j(-\varepsilon) = \tr G(-\varepsilon)
\le   \int^{\infty}_0 t\cdot \tr Q_-(t)\,dt.
    \end{eqnarray}
Noting that the right-hand side of \eqref{eq:bargmann_est-te_with_varepsilon} does not
depend on $\varepsilon$  and passing here to the limit as $\varepsilon \downarrow 0$ we
arrive at \eqref{Barg_type_est_for_matrix_Dir_oper}.
 \end{proof}

\subsection{Bargmann type estimate for graphs}
Now we are ready to state and prove the Bargmann type estimate for the Hamiltonian ${\bf
H}_{\alpha,Q}$,  the main result of the section.

For a symmetric quadratic form $\mathfrak{t}$ one  puts:
\begin{equation*}
\kappa_-(\mathfrak{t})=\left\{\max\dim L_-:L_-\subset\dom(\mathfrak{t}), \quad
\mathfrak{t}[f]<0 \quad \text{for all} \quad f\in L_-\setminus \{0\} \right\}.
\end{equation*}
If the form $\mathfrak{t}$ is closed, and  $T =T^*$ is the operator associated with it,
then
\\$\kappa_-(\mathfrak{t})=\kappa_-(T)$ due to the mini-max principle.
   \begin{theorem}\label{Bargkappa}
Let $Q=\bigoplus_{j=1}^p Q_j=Q^*$ and  $x_eQ_e\in L^1(e; \mathbb{C}^{m\times m}),$ \
$e\in \mathcal{E}$.
Then  for any $\alpha: \mathcal{V}\rightarrow
\mathbb{C}^{m\times m}$, $\alpha(\cdot)=\alpha(\cdot)^*$,  the operator ${\bf
H}_{\alpha,Q}$ admits the following estimate
  \begin{equation}\label{Main_Barg_type_est_for_H_alpha,Q}
\kappa_{-}({\bf H}_{\alpha,Q})\leq
\sum_{e\in \mathcal{E}} \left[\int_{e}x_e \cdot {\tr}(Q_{e,-}(x))\,dx\right] + m|\mathcal{V}|.
  \end{equation}
where $[a]$ denotes the integer part of a number $a\in \Bbb R$.
\end{theorem}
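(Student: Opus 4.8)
The plan is to pass to the quadratic form of ${\bf H}_{\alpha,Q}$, decouple the graph into its individual edges at the cost of $m|\mathcal V|$ negative squares, and then apply the edgewise (half-line) Bargmann estimate of Proposition~\ref{prop:matrix_Barg_estim-for_S-L_oper}. First, by Proposition~\ref{lem_form_t_H_alpha,Q} the operator ${\bf H}_{\alpha,Q}$ is associated with the closed lower-semibounded form $\mathfrak t_{\alpha,Q}$ of \eqref{eq:main_quadr_form}--\eqref{eq:dom_of_main_form}, so by the min--max principle $\kappa_-({\bf H}_{\alpha,Q})=\kappa_-(\mathfrak t_{\alpha,Q})$.

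\textbf{Decoupling at the vertices.} Consider the subspace
\[
\mathfrak D_0:=\{f\in W^{1,2}(\mathcal G;\mathbb C^m):\ f(v)=0\ \text{for all}\ v\in\mathcal V\}\subseteq \dom\mathfrak t_{\alpha,Q}.
\]
Since by the Sobolev embedding \eqref{embed_theorem} the maps $f\mapsto f(v)\in\mathbb C^m$, $v\in\mathcal V$, are bounded on $\dom\mathfrak t_{\alpha,Q}$ equipped with the form norm, $\mathfrak D_0$ is the kernel of a map into $\mathbb C^{m|\mathcal V|}$, hence has codimension at most $m|\mathcal V|$ in $\dom\mathfrak t_{\alpha,Q}$. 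For $f\in\mathfrak D_0$ its restriction $f_e$ to each edge $e$ vanishes at the endpoints, i.e. $f_e\in W^{1,2}_0(e;\mathbb C^m)$, the vertex terms in \eqref{eq:main_quadr_form} vanish, and the edges decouple; thus $\mathfrak t_{\alpha,Q}\!\restriction\!\mathfrak D_0=\bigoplus_{e\in\mathcal E}\mathfrak t^{D}_{Q_e}$, where $\mathfrak t^{D}_{Q_e}$ is the form of the Dirichlet realization ${\bf H}_{e,D}:={\bf H}_{e,D,Q_e}$ on the edge $e$. Restricting a form to a subspace of codimension $N$ lowers $\kappa_-$ by at most $N$ (pure linear algebra from the min--max definition of $\kappa_-$), so
\[
\kappa_-(\mathfrak t_{\alpha,Q})\le m|\mathcal V|+\kappa_-\Bigl(\bigoplus_{e\in\mathcal E}\mathfrak t^{D}_{Q_e}\Bigr)=m|\mathcal V|+\sum_{e\in\mathcal E}\kappa_-({\bf H}_{e,D}).
\]

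\textbf{Edgewise Bargmann estimates.} For a lead $e\cong[0,\infty)$, Proposition~\ref{prop:matrix_Barg_estim-for_S-L_oper} (applicable since $x_eQ_e\in L^1(e;\mathbb C^{m\times m})$) gives directly $\kappa_-({\bf H}_{e,D})\le\int_e x_e\,\tr(Q_{e,-}(x))\,dx$. For a finite edge $e\cong(0,|e|)$ one reruns the Birman--Schwinger argument from the proof of that proposition: one may assume $Q_e=-Q_{e,-}$, writes the relevant operator $I+M(-\varepsilon)$ as the trace-class integral operator with kernel $Q_{e,-}^{1/2}(x)G_D(x,t;-\varepsilon)Q_{e,-}^{1/2}(t)$, where now $G_D(\cdot,\cdot;-\varepsilon)$ is the Dirichlet Green function of $-d^2/dx^2$ on $(0,|e|)$ whose diagonal satisfies $G_D(x,x;0)=x(|e|-x)/|e|\le x$, bounds the number of its eigenvalues exceeding $1$ by the trace, and lets $\varepsilon\downarrow0$, obtaining $\kappa_-({\bf H}_{e,D})\le\int_0^{|e|}\tr(Q_{e,-}(x))\,G_D(x,x;0)\,dx\le\int_e x_e\,\tr(Q_{e,-}(x))\,dx$. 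Since $\kappa_-({\bf H}_{e,D})\in\mathbb N_0$ is bounded by the corresponding integral, it is bounded by its integer part; inserting this into the previous display yields \eqref{Main_Barg_type_est_for_H_alpha,Q}, the bound being finite because $|\mathcal E|<\infty$ and each $x_eQ_e\in L^1(e;\mathbb C^{m\times m})$. The only step needing some work is the finite-edge case of Step~3 — replaying the Birman--Schwinger machinery of Proposition~\ref{prop:matrix_Barg_estim-for_S-L_oper} with the finite-interval Dirichlet Green function, together with a routine approximation by continuous matrix potentials if $Q_e$ is merely summable — whereas the identification $\mathfrak t_{\alpha,Q}\!\restriction\!\mathfrak D_0=\bigoplus_e\mathfrak t^{D}_{Q_e}$ and the codimension bookkeeping are routine.
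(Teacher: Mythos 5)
Your proof is correct and follows essentially the same route as the paper: identify $\kappa_-({\bf H}_{\alpha,Q})$ with $\kappa_-(\mathfrak t_{\alpha,Q})$ via Proposition \ref{lem_form_t_H_alpha,Q}, restrict to the codimension-$m|\mathcal V|$ subspace $W_0^{1,2}(\mathcal G;\mathbb C^m)$ on which the form decouples into the edgewise Dirichlet forms, and apply the Bargmann estimate edge by edge. If anything you are more careful than the paper on the finite edges, where the paper invokes Proposition \ref{prop:matrix_Barg_estim-for_S-L_oper} (stated for the half-line) without comment, whereas you rerun the Birman--Schwinger argument with the finite-interval Dirichlet Green function and the bound $G_D(x,x;0)=x(|e|-x)/|e|\le x$.
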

\begin{proof}
Alongside the Hamiltonian  ${\bf H}_{\alpha,Q}$ we consider the Dirichlet operator ${\bf
H}_{D} = {\bf H}_{D,Q} = \bigoplus_{j=1}^p {\bf H}_{j,D}$ where ${\bf H}_{j,D}$ is the
Dirichlet realization of the differential expression $\mathcal{A}_{j}$,
$j\in\{1,\ldots,p\}$ (see the step $\text{(i)}_2$ of the proof in  Theorem
\ref{sc_spec}). Letting $W_0^{1,2}(\mathcal{G}; \Bbb C^{m}) = \bigoplus_{e_j\in \mathcal
E} W_0^{1,2}(e_j; \Bbb C^{m})$, i.e.
$$
W_0^{1,2}(\mathcal{G}; \Bbb C^{m})=\{f\in W^{1,2}(\mathcal{G};\mathbb{C}^m): f(v)=0 \quad \text{for each} \quad v\in\mathcal{V}\},
$$
we consider the following form
\begin{equation}\label{Dirichlet_form}
\begin{gathered}
\mathfrak{t}_D[f] =  \mathfrak{t}_{D, Q}[f] = \sum_{j=1}^p\left(\int_{e_{j}}\left|f'_{j}(x)\right|^2dx +
\int_{e_j}\left<Q_jf_j,f_j\right>dx\right)\\ =
\int_{\mathcal{G}}\left(\left|f'(x)\right|^2+\left<Qf,f\right>\right)dx, \qquad
\dom\mathfrak{t}_{D} =  W_0^{1,2}(\mathcal{G}; \Bbb C^{m}).
\end{gathered}
\end{equation}
It is well known (\cite{Kre47}, see also  \cite[Ch. 3]{DerMal17}) that the form $\mathfrak{t}_{D}$ is closed and the
operator associated with $\frak t_D$ is generated by  the Dirichlet operator ${\bf
H}_{D}$.

In accordance with Proposition \ref{prop:matrix_Barg_estim-for_S-L_oper} for each
operator ${\bf H}_{j,D}$ one has
%
%
  \begin{equation}\label{Barg_est_for_one_edge}
\kappa_-(\frak t_{j,D}) = \kappa_{-}({\bf H}_{j,D})\leq  \left[\int_{e_j}x \cdot \tr
Q_{j,-}(x)\, dx\right], \quad j\in \{1,\ldots, p\}.
 \end{equation}
%
%
Taking the sum over all edges
we arrive at the following estimate for the Dirichlet operator ${\bf H}_{D}$ on the
graph  $\mathcal{G}$
  \begin{equation}\label{Barg_est}
\kappa_-(\frak t_D) = \kappa_{-}({\bf H}_{D})\leq\sum_{j=1}^p\left[\int_{e_j}x \cdot \tr Q_{j,-}(x)\, dx\right].
 \end{equation}
Next we consider the form   $\mathfrak{t}_{\alpha, Q}$ given by
\eqref{eq:main_quadr_form}--\eqref{eq:dom_of_main_form}.   It follows with account of
\eqref{Dirichlet_form} that the restriction of the form $\mathfrak{t}_{\alpha, Q}$ to
the domain $\dom\frak t_D = W_0^{1,2}(\mathcal{G}; \Bbb C^{m})$ coincides with
the Dirichlet form $\frak t_D$, i.e. $\mathfrak{t}_{\alpha, Q}\upharpoonright \dom\frak
t_D =  \frak t_D$. Therefore
  \begin{equation}\label{dim_form_t/form_D}
\dim(\dom\frak t_{\alpha, Q}/\dom\frak t_D) = m|\mathcal{V}|.
  \end{equation}
Combining this relation with estimate \eqref{Barg_est} and applying the mini-max principle one gets
  \begin{equation}\label{Barg_type_est_for_H_alpha,Q}
\kappa_-(\frak t_{\alpha, Q}) \leq \kappa_-(\frak t_D) + m|\mathcal{V}| \leq  \sum_{j=1}^p\left[\int_{e_j}x \cdot \tr Q_{j,-}(x)\, dx\right] + m|\mathcal{V}|.
 \end{equation}

On the other hand,  by Proposition  \ref{lem_form_t_H_alpha,Q}, the operator associated
with the form $\frak t_{\alpha, Q}$ is  the Hamiltonian  ${\bf H}_{\alpha, Q}$.
Therefore $\kappa_-({\bf H}_{\alpha, Q}) = \kappa_-(\frak t_{\alpha, Q})$.
  Combining this equality with  \eqref{Barg_type_est_for_H_alpha,Q}  we arrive at
  required estimate \eqref{Main_Barg_type_est_for_H_alpha,Q}.
\end{proof}
Note that estimate \eqref{Main_Barg_type_est_for_H_alpha,Q} does not distinguished between  realizations
${\bf H}_{\alpha,Q}$  and   the Kirchhoff  realization ${\bf H}_{0,Q}$  of $\mathcal A$.
  \begin{corollary}
Let  ${\bf H}_{\emph{kir}} =  {\bf H}_{0,Q}$ be  the Kirchhoff  realization  of $\mathcal A$.
Then
    \begin{equation}\label{eq:barg_estim_if_AN>0}
\kappa_-({\bf H}_{\alpha, Q}) \le \kappa_-({\bf H}_{\emph{kir}}) \ + \  \sum_{v\in
\mathcal{V}}\kappa_-(\alpha(v)).
    \end{equation}
In particular, if the  Kirchhoff  realization ${\bf H}_{\emph{kir}}$ of $\mathcal A$ is
non-negative, ${\bf H}_{\emph{kir}} \ge 0$,  then  $\kappa_-({\bf H}_{\alpha, Q}) \le
\sum_{v\in \mathcal{V}}\kappa_-(\alpha(v)).$
\end{corollary}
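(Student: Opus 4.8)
The plan is to argue entirely on the level of quadratic forms. By Proposition~\ref{lem_form_t_H_alpha,Q}, for every Hermitian $\alpha(\cdot)$ the Hamiltonian ${\bf H}_{\alpha,Q}$ is the operator associated with the closed, lower semibounded form $\mathfrak{t}_{\alpha,Q}$ of \eqref{eq:main_quadr_form}, whose domain $W^{1,2}(\mathcal{G};\mathbb{C}^m)$ does \emph{not} depend on $\alpha$ or $Q$; applying this with $\alpha\equiv 0$ shows that ${\bf H}_{\text{kir}}={\bf H}_{0,Q}$ is associated with the form $\mathfrak{t}_{0,Q}$ on the same domain, and clearly
\[
\mathfrak{t}_{\alpha,Q}[f]=\mathfrak{t}_{0,Q}[f]+\sum_{v\in\mathcal{V}}\langle\alpha(v)f(v),f(v)\rangle,\qquad f\in W^{1,2}(\mathcal{G};\mathbb{C}^m).
\]
By the mini-max principle recalled before Theorem~\ref{Bargkappa} one has $\kappa_-({\bf H}_{\alpha,Q})=\kappa_-(\mathfrak{t}_{\alpha,Q})$ and $\kappa_-({\bf H}_{\text{kir}})=\kappa_-(\mathfrak{t}_{0,Q})$, so it suffices to establish the purely form-theoretic inequality $\kappa_-(\mathfrak{t}_{\alpha,Q})\le\kappa_-(\mathfrak{t}_{0,Q})+N$ with $N:=\sum_{v\in\mathcal{V}}\kappa_-(\alpha(v))$.

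To this end I would first isolate the ``bad'' directions of the vertex perturbation. For each $v\in\mathcal{V}$ let $P_v^-$ be the orthogonal projection of $\mathbb{C}^m$ onto the negative spectral subspace of $\alpha(v)$, so $\Rank P_v^-=\kappa_-(\alpha(v))$, and put
\[
\mathcal{D}_0:=\{f\in W^{1,2}(\mathcal{G};\mathbb{C}^m):\ P_v^-f(v)=0\ \text{for every}\ v\in\mathcal{V}\}.
\]
The map $f\mapsto(P_v^-f(v))_{v\in\mathcal{V}}$ is a bounded linear map from $W^{1,2}(\mathcal{G};\mathbb{C}^m)$ into a space of dimension $N$ --- boundedness of the vertex evaluations being exactly the Sobolev embedding $W^{1,2}(e;\mathbb{C}^m)\hookrightarrow C(e;\mathbb{C}^m)$ already used in \eqref{embed_theorem} --- so its kernel $\mathcal{D}_0$ has codimension at most $N$ in $W^{1,2}(\mathcal{G};\mathbb{C}^m)$. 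On $\mathcal{D}_0$ each $f(v)$ is orthogonal to the negative eigenspace of $\alpha(v)$, hence $\langle\alpha(v)f(v),f(v)\rangle\ge 0$, and consequently $\mathfrak{t}_{\alpha,Q}[f]\ge\mathfrak{t}_{0,Q}[f]$ for all $f\in\mathcal{D}_0$.

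Finally I would run the standard dimension count. Choose a subspace $L_-\subset W^{1,2}(\mathcal{G};\mathbb{C}^m)$ with $\dim L_-=\kappa_-(\mathfrak{t}_{\alpha,Q})$ and $\mathfrak{t}_{\alpha,Q}[f]<0$ for all $f\in L_-\setminus\{0\}$. Since $\mathcal{D}_0$ has codimension $\le N$, the subspace $L_-\cap\mathcal{D}_0$ has dimension at least $\kappa_-(\mathfrak{t}_{\alpha,Q})-N$; and for every nonzero $f\in L_-\cap\mathcal{D}_0$ one has $\mathfrak{t}_{0,Q}[f]\le\mathfrak{t}_{\alpha,Q}[f]<0$, so $L_-\cap\mathcal{D}_0$ is a strictly negative subspace for $\mathfrak{t}_{0,Q}$. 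Hence $\kappa_-(\mathfrak{t}_{0,Q})\ge\kappa_-(\mathfrak{t}_{\alpha,Q})-N$, which upon translating back through the mini-max equalities is precisely \eqref{eq:barg_estim_if_AN>0}; the ``in particular'' statement follows at once since ${\bf H}_{\text{kir}}\ge 0$ means $\kappa_-({\bf H}_{\text{kir}})=0$. I do not anticipate a genuine obstacle here; the only point calling for a line of justification is the boundedness of the vertex evaluations on $W^{1,2}(\mathcal{G};\mathbb{C}^m)$, which is already available from the proof of Proposition~\ref{lem_form_t_H_alpha,Q}, and even the crude estimate that $\mathcal{D}_0$ has codimension at most $N$ (rather than exactly $N$) is all that the argument requires.
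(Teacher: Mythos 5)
Your proof is correct and follows essentially the same route as the paper: decompose $\mathfrak{t}_{\alpha,Q}=\mathfrak{t}_{\mathrm{kir}}+\sum_{v}\langle\alpha(v)f(v),f(v)\rangle$ on the common form domain, identify $\kappa_-$ of each Hamiltonian with $\kappa_-$ of its form via Proposition \ref{lem_form_t_H_alpha,Q} and the mini-max principle, and conclude by subadditivity of the number of negative squares. The only difference is that the paper simply asserts the inequality $\kappa_-(\mathfrak{t}_{\mathrm{kir}}+\mathfrak{t}_\alpha)\le\kappa_-(\mathfrak{t}_{\mathrm{kir}})+\sum_v\kappa_-(\alpha(v))$, whereas you supply the (correct) codimension-count justification via the subspace $\mathcal{D}_0$.
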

\begin{proof}
Setting $\frak t_{\text{kir}} := \frak t_{{\bf H}_{\text{kir}}}$
one  rewrites the form  $\frak t_{\alpha, Q}$ (see \eqref{eq:main_quadr_form}--\eqref{eq:dom_of_main_form})   as
     \begin{equation}\label{eq:main_quadr_form_with_H_kir}
\frak t_{\alpha, Q}[f] = \frak t_{\text{kir}}[f]  \  + \ \sum_{v\in
\mathcal{V}}\left<\alpha(v)f(v),f(v)\right>, \quad  f\in \dom\mathfrak{t}_{\alpha, Q} =
\dom\mathfrak{t}_{\text{kir}}.
    \end{equation}
Note  that   Proposition  \ref{lem_form_t_H_alpha,Q}  implies
$\kappa_-(\frak t_{\text{kir}}) = \kappa_-({\bf H}_{\text{kir}})$  and
$\kappa_-({\bf H}_{\alpha, Q}) = \kappa_-(\frak t_{\alpha, Q})$. Combining these relations
with  \eqref{eq:main_quadr_form_with_H_kir} yields
  $$
\kappa_-({\bf H}_{\alpha, Q}) = \kappa_-(\frak t_{\alpha, Q}) \le \kappa_-(\frak
t_{\text{kir}}) + \sum_{v\in \mathcal{V}}\kappa_-(\alpha(v)) = \kappa_-({\bf H}_{\text{kir}})  + \sum_{v\in
\mathcal{V}}\kappa_-(\alpha(v)).
  $$
This proves the required estimate.
       \end{proof}
Alongside the Hamiltonian  ${\bf H}_{\alpha,Q}$  consider the Neumann  operator ${\bf
H}_{N} = {\bf H}_{N,Q} = \bigoplus_{j=1}^p {\bf H}_{j,N}$ where ${\bf H}_{j,N}$ is the
Neumann  realization of the differential expression $\mathcal{A}_{j}$,
$j\in\{1,\ldots,p\}$. It is well known that the quadratic form $\mathfrak{t}_{j,N} :=
\mathfrak{t}_{{\bf H}_{j,N}}$ corresponding to ${\bf H}_{j,N}$ is given by
$$
\mathfrak{t}_{j,N}[f_j] =  
\int_{e_j}\left(\left|f'_j(x)\right|^2 + \left<Q_j(x)f_j(x),f_j(x)\right>\right)\,dx, \
f_j\in  \dom\mathfrak{t}_{j,N} =  W^{1,2}(e_j; \Bbb C^{m}).
$$
Therefore the quadratic form $\mathfrak{t}_N :=  \mathfrak{t}_{{\bf H}_{N, Q}}$
corresponding to ${\bf H}_{N}$  is defined by
\begin{equation}\label{eq:Neumann_form}
\begin{gathered}
\mathfrak{t}_N[f] =  \mathfrak{t}_{{\bf H}_{N, Q}}[f] =
\int_{\mathcal{G}}\left(\left|f'(x)\right|^2+\left<Qf,f\right>\right)dx, \quad
\dom\mathfrak{t}_{N} =  W^{1,2}(\mathcal{G}\setminus \mathcal{V}; \Bbb C^{m}).
\end{gathered}
\end{equation}
\begin{corollary}
Let  ${\bf H}_{N} = \widehat A_N$ be  the Neumann realization  of $\mathcal A$.  Then
    \begin{equation}\label{eq:barg_estim_if_AN>0-1}
{\bf H}_{N} \le {\bf H}_{\emph{kir}} \le {\bf H}_{D}, \quad i.e. \quad  \frak t_{N} \le \frak
t_{\emph{kir}} \le \frak t_{D}.
    \end{equation}
Moreover,  $\frak t_{N}[f] = \frak t_{\emph{kir}}[f] = \frak t_{D}[f]$ \  for \  $f\in
\dom(\frak t_{D}) = W^{1,2}_0(\mathcal{G}; \Bbb C^{m}).$
  \end{corollary}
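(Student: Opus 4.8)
The plan is to observe that ${\bf H}_{N}$, ${\bf H}_{\text{kir}}={\bf H}_{0,Q}$ and ${\bf H}_{D}={\bf H}_{D,Q}$ are three self-adjoint realizations of one and the same Sturm--Liouville expression $\mathcal{A}$ on $\mathcal{G}$, whose associated closed lower semibounded forms are given by the \emph{same analytic expression}
$$
\mathfrak{t}_{\bullet}[f]=\int_{\mathcal{G}}\bigl(|f'(x)|^{2}+\langle Q(x)f(x),f(x)\rangle\bigr)\,dx ,
$$
and differ only through their domains. For $\mathfrak{t}_{\text{kir}}$ this is \eqref{eq:main_quadr_form}--\eqref{eq:dom_of_main_form} with $\alpha\equiv 0$, so the vertex term drops out; for $\mathfrak{t}_{D}$ it is \eqref{Dirichlet_form}; and for $\mathfrak{t}_{N}$ it is \eqref{eq:Neumann_form}. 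First I would record the chain of domain inclusions
$$
\dom\mathfrak{t}_{D}=W^{1,2}_{0}(\mathcal{G};\mathbb{C}^{m})\ \subseteq\ \dom\mathfrak{t}_{\text{kir}}=W^{1,2}(\mathcal{G};\mathbb{C}^{m})\ \subseteq\ \dom\mathfrak{t}_{N}=W^{1,2}(\mathcal{G}\setminus\mathcal{V};\mathbb{C}^{m}),
$$
using that a function vanishing at every vertex is in particular continuous there, and that a function which is edgewise $W^{1,2}$ and continuous at the vertices is a fortiori edgewise $W^{1,2}$.

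Next I would verify the soft points: $\mathfrak{t}_{\text{kir}}$ is closed and lower semibounded by Proposition \ref{lem_form_t_H_alpha,Q}(i) (with $\alpha=0$); the same Sobolev-embedding/KLMN argument applied \emph{edgewise} yields the same for $\mathfrak{t}_{N}=\bigoplus_{j=1}^{p}\mathfrak{t}_{j,N}$ and for $\mathfrak{t}_{D}$, and identifies them as the forms of ${\bf H}_{N}=\bigoplus_{j=1}^{p}{\bf H}_{j,N}$ and of ${\bf H}_{D}$ respectively (for $\mathfrak{t}_D$ this is also recalled right after \eqref{Dirichlet_form}). Since the three analytic expressions literally coincide, restricting to the smaller domains gives at once $\mathfrak{t}_{N}[f]=\mathfrak{t}_{\text{kir}}[f]$ for $f\in W^{1,2}(\mathcal{G};\mathbb{C}^{m})$ and $\mathfrak{t}_{N}[f]=\mathfrak{t}_{\text{kir}}[f]=\mathfrak{t}_{D}[f]$ for $f\in W^{1,2}_{0}(\mathcal{G};\mathbb{C}^{m})$, which is the ``moreover'' assertion. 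Combined with the inclusions $\dom\mathfrak{t}_{\text{kir}}\subseteq\dom\mathfrak{t}_{N}$ and $\dom\mathfrak{t}_{D}\subseteq\dom\mathfrak{t}_{\text{kir}}$, these equalities say exactly that $\mathfrak{t}_{N}\le\mathfrak{t}_{\text{kir}}\le\mathfrak{t}_{D}$ in the sense of the ordering of closed semibounded forms; by the monotonicity of the form--operator correspondence (the first representation theorem, \cite[Ch.~VI]{Kato66}) this is equivalent to ${\bf H}_{N}\le{\bf H}_{\text{kir}}\le{\bf H}_{D}$, i.e. \eqref{eq:barg_estim_if_AN>0-1}.

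I do not expect a genuine obstacle here: the statement is in essence the tautology that self-adjoint operators produced from a single Sturm--Liouville expression by boundary conditions with nested form domains are ordered in the reverse way, with the three forms agreeing on the smallest (Dirichlet) domain. The only point needing (entirely routine) care is the bookkeeping that certifies $\mathfrak{t}_{N}$ and $\mathfrak{t}_{D}$ as the forms of ${\bf H}_{N}$ and ${\bf H}_{D}$ and checks their closedness and semiboundedness, which is either quoted from the discussion around \eqref{Dirichlet_form}--\eqref{eq:Neumann_form} or obtained edgewise verbatim from the proof of Proposition \ref{lem_form_t_H_alpha,Q}.
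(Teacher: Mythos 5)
Your argument is correct and is precisely the one the paper intends: the corollary is stated without proof as an immediate consequence of the fact that $\mathfrak{t}_{N}$, $\mathfrak{t}_{\text{kir}}=\mathfrak{t}_{0,Q}$ and $\mathfrak{t}_{D}$ are given by one and the same integral expression on the nested domains $W^{1,2}(\mathcal{G}\setminus\mathcal{V};\mathbb{C}^{m})\supseteq W^{1,2}(\mathcal{G};\mathbb{C}^{m})\supseteq W^{1,2}_{0}(\mathcal{G};\mathbb{C}^{m})$, together with the identification of these forms with the respective operators (Proposition \ref{lem_form_t_H_alpha,Q} and the remarks around \eqref{Dirichlet_form}--\eqref{eq:Neumann_form}). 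Your bookkeeping of the domain inclusions, the closedness/semiboundedness via the edgewise KLMN argument, and the passage from form ordering to operator ordering supplies exactly the routine details the paper omits.
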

  \begin{remark}
Note that the assumption   ${\bf H}_{\text{kir}} \ge 0$ is essential. Namely, simple examples
(see Remarks \ref{rem-10}, \ref{remfr5} and \ref{rem-l}) show that the estimate  \eqref{eq:barg_estim_if_AN>0} becomes false
whenever the condition ${\bf H}_{\text{kir}} \ge 0$ is replaced  by  ${\bf H}_D = \widehat
A_F \ge 0$.
   \end{remark}

\section{Non-compact star graphs with finitely many edges} \label{sect.StarGr}

\subsection{Boundary triplets and Weyl functions}

Now let us consider the star graph $\mathcal{G}$ consisting of one common vertex $v_0:=0$, $p_1>0$ leads
and $p_2\geq 0$ edges,
$p := p_1+p_2$. As earlier we equip the graph $\mathcal{G}$ with the minimal Sturm-Liouville operator $A$ defined by \eqref{symop}.

The boundary triplet for $A^*$ is chosen in accordance with \eqref{eq: 410}, \eqref{eq: 400} and \eqref{triples}:
 \begin{equation*}\label{eq: triples-5}
\begin{gathered}
\mathcal{H}_{j}=\mathbb{C}^m, \quad \Gamma_0^{(j)} f_{e_j}=f_{e_j}(0), \quad \Gamma_1^{(j)} f_{e_j}=f'_{e_j}(0), \quad j\leq p_1, \\
\mathcal{H}_j=\mathbb{C}^{2m}, \quad  \Gamma_0^{(j)} f_{e_j}=\begin{pmatrix}f_{e_j}(0)\\f_{e_j}(|e_j|)\end{pmatrix}, \quad
\Gamma_1^{(j)} f_{e_j}=\begin{pmatrix}f'_{e_j}(0)\\f'_{e_j}(|e_j|)\end{pmatrix}, \quad j>p_1, \\
\mathcal{H}:=\bigoplus_{j=1}^{p}\mathcal{H}_j:=\mathcal{H}_{\infty}\bigoplus\mathcal{H}_{\text{fin}},
\qquad \Gamma_a:=\bigoplus_{j=1}^{p}\Gamma_a^{(j)}, \quad a\in \{0,1\}.
\end{gathered}
\end{equation*}
We introduce the corresponding Weyl function in the same way as in Subsection \ref{abssc},
\begin{equation}\label{W.f.}
M(z)=\left(\bigoplus\limits_{j=1}^{p_1} M_{j}(z)\right)\bigoplus\left(\bigoplus\limits_{j=p_1+1}^{p} M_{j}(z)\right)
:=M_{\infty}(z)\oplus M_{\text{fin}}(z),
\end{equation}
where for each lead $e_j\in \mathcal{E}_{\infty},$ i.e. for $j\in\{1,...,p_1\}$
\begin{equation}\label{eq.5.70}
N_{1,j}(z)M_{j}(z)=N_{2,j}(z),
\end{equation}
see \eqref{eq:Weyl_fNew}-\eqref{eq:M1_M2New},
and for each finite edge $e_j\in \mathcal{E}_{\text{fin}},$ i.e. for $j\in\{p_1+1,...,p\}$
\begin{equation}\label{eq.5.80}
\begin{gathered}
M_{j}(z):=\begin{pmatrix}
-S_j(|e_j|,z)^{-1}C_j(|e_j|,z)  & S_j(|e_j|,z)^{-1}\\
(S_j(|e_j|,\overline{z})^*)^{-1}   &   -S'_j(|e_j|,z)S_j(|e_j|,z)^{-1}
\end{pmatrix} \\
:=\begin{pmatrix}
M_{e_j}^{11}(z) & M_{e_j}^{12}(z) \\
M_{e_j}^{21}(z) & M_{e_j}^{22}(z)
\end{pmatrix} =
\begin{pmatrix}
M_{e_j}^{11}(z) & M_{e_j}^{12}(z) \\
M_{e_j}^{12}(\overline{z})^* & M_{e_j}^{22}(z)
\end{pmatrix},
\end{gathered}
\end{equation}
see \eqref{eq: 3.24A}.

\subsection{Negative spectrum}\label{sect.NegSpec}

Here we clarify and substantially  complete  Theorem \ref{Bargkappa} for the case
of star graph and assuming the  minimal operator $A$  to be non-negative, $A\geq 0$.
In particular, we show that Bargmann type estimate \eqref{eq:barg_estim_if_AN>0}
is not sharp.
    \begin{theorem}\label{th.kappaminus}
Let $\mathcal{G}$ be the star graph with $p_1(>0)$ leads and $p_2(\geq 0)$ finite edges.
Let also ${\bf H}_{\alpha,Q}$ be the Hamiltonian given by \eqref{delt}-\eqref{deltop}
and let the minimal operator $A$  defined by \eqref{symop} is non-negative, $A\geq 0.$
Further, let $\Pi=\{\mathcal{H},\Gamma_0, \Gamma_1\}$ be the boundary triplet for $A^*$ given by \eqref{triples},
and let $M(\cdot)$ be the corresponding Weyl function.
Then:
\begin{equation}\label{eq.5.90}
\kappa_{-}({\bf H}_{\alpha,Q})=\kappa_{-}(T),
\end{equation}
where $T\in\mathbb{M}((p_2+1)m)$ is the following matrix:
\begin{equation}\label{matr.T}
{\tiny{
T=\begin{pmatrix}
\alpha(0)-\left(\sum_{j=1}^{p_1}M_{l_j}(0)+\sum_{j=p_1+1}^{p}M_{e_j}^{11}(0)\right) & -M_{e_{p_1+1}}^{12}(0) & \dots & -M_{e_{p}}^{12}(0)\\
-M_{e_{p_1+1}}^{21}(0) & \alpha(v_1)-M_{e_{p_1+1}}^{22}(0) & \ldots & \mathbb{O}_m \\
\vdots & \ldots & \ddots & \ldots \\
-M_{e_{p}}^{21}(0) & \mathbb{O}_m & \ldots & \alpha(v_{p_2})-M_{e_{p}}^{22}(0)
\end{pmatrix}.
}}
\end{equation}
Here  all nontrivial offdiagonal entries of $T$ are located only in the first row and
column.
\end{theorem}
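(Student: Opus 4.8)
The plan is to realize $\mathbf{H}_{\alpha,Q}$ as a self-adjoint extension $A_{C,D}$ of the form \eqref{C-D} relative to the boundary triplet \eqref{triples}, to identify the reference operator $A_0=A^*\!\upharpoonright\ker\Gamma_0$ with the Friedrichs extension of $A$, and then to invoke Proposition \ref{A_0}; what remains is a direct computation of $CD^*-DM(0)D^*$.

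First I would rewrite the $\delta$-conditions \eqref{delt} in matrix form. For the star graph the vertex set is $\{v_0,v_1,\dots,v_{p_2}\}$, where $v_0=0$ is the centre and $v_1,\dots,v_{p_2}$ are the loose ends of the finite edges $e_{p_1+1},\dots,e_p$; accordingly \eqref{delt} splits into (a) the $m(p_1+p_2-1)$ continuity relations $f_{e_1}(0)=\dots=f_{e_{p_1}}(0)=f_{e_{p_1+1}}(0)=\dots=f_{e_p}(0)$, (b) the $m$ Kirchhoff relations $\sum_{j=1}^{p_1}f'_{e_j}(0)+\sum_{j=p_1+1}^{p}f'_{e_j}(0)=\alpha(0)f(0)$ at $v_0$, and (c) the $mp_2$ relations $f'_{e_{p_1+k}}(v_k)=\alpha(v_k)f_{e_{p_1+k}}(v_k)$, $k=1,\dots,p_2$, at the loose ends. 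As everywhere in the paper, $f'_e(v)$ denotes the \emph{outgoing} derivative, so for a finite edge the second entries of $\Gamma_0^{(j)},\Gamma_1^{(j)}$ carry precisely the sign that makes \eqref{eq.5.80} the Weyl function. Altogether this is $m(p_1+2p_2)=\mathrm{n}_{\pm}(A)$ scalar relations, written as $D\Gamma_1 f-C\Gamma_0 f=0$; I note that the rows of $(C\ D)$ coming from (a) have vanishing $D$-part. Since $\alpha(\cdot)=\alpha(\cdot)^*$ the operator $\mathbf{H}_{\alpha,Q}$ is self-adjoint (Subsection \ref{sect.PosPSp}), hence by Remark \ref{A_CD} the pair $(C,D)$ automatically satisfies $CD^*=DC^*$ and $\det(CC^*+DD^*)\ne0$.

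Next I would check the hypotheses of Proposition \ref{A_0}. Because $A=\bigoplus_e A_e$ is an orthogonal sum and each $Q_e\in L^1(e;\bC^{m\times m})$, every endpoint entering $\ker\Gamma_0$ is a regular endpoint, so the Friedrichs extension of $A_e$ coincides with its Dirichlet realization (see \cite{Kre47}, \cite[Ch.~3]{DerMal17}); therefore $\widehat A_F=\bigoplus_e\widehat{(A_e)}_F=\mathbf{H}_D=A_0$. Since $A\ge0$, we have $A_0=\mathbf{H}_D\ge0$, so each $M_{l_j}(\cdot)$ and $M_{e_j}(\cdot)$ is holomorphic on $(-\infty,0)$ and the limit $M(0):=\lim_{x\uparrow0}M(x)$ exists; it is block-diagonal, $M(0)=\big(\bigoplus_{j=1}^{p_1}M_{l_j}(0)\big)\oplus\big(\bigoplus_{j=p_1+1}^{p}M_{e_j}(0)\big)$, with $M_{e_j}(0)$ given by \eqref{eq.5.80}. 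Proposition \ref{A_0} then yields $\kappa_-(\mathbf{H}_{\alpha,Q})=\kappa_-\big(CD^*-DM(0)D^*\big)$.

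Finally, I would compute this matrix. From $CD^*=DC^*$ together with the vanishing of the (a)-rows of $D$ it follows that every row of $CD^*-DM(0)D^*$ indexed by a continuity relation is zero, and by Hermiticity so is every such column; after reordering, $CD^*-DM(0)D^*=0\oplus\widetilde T$, where $\widetilde T$ is an $m(p_2+1)\times m(p_2+1)$ Hermitian matrix whose rows and columns are labelled by the Kirchhoff relation at $v_0$ and the $p_2$ loose-end relations, so that $\kappa_-(\mathbf{H}_{\alpha,Q})=\kappa_-(\widetilde T)$. To evaluate $\widetilde T$ one uses that the Kirchhoff row of $D^*$ sends $\bC^m$ diagonally into the blocks of all leads and into the $0$-ends of all finite edges, whereas the $v_k$-row of $D^*$ sends it into the $|e_{p_1+k}|$-end of the $k$-th finite edge; pushing these through the block-diagonal $M(0)$ and back through $D$ one finds that $DM(0)D^*$, restricted to the surviving index set, has $(1,1)$-block $\sum_{j=1}^{p_1}M_{l_j}(0)+\sum_{j=p_1+1}^{p}M_{e_j}^{11}(0)$, first-row/column blocks $M_{e_{p_1+k}}^{12}(0)$ and $M_{e_{p_1+k}}^{21}(0)$, $k$-th diagonal block $M_{e_{p_1+k}}^{22}(0)$, and zero blocks between distinct loose ends; likewise $CD^*$ restricted to that set equals $\diag(\alpha(0),\alpha(v_1),\dots,\alpha(v_{p_2}))$. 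Subtracting gives $\widetilde T=T$, and hence $\kappa_-(\mathbf{H}_{\alpha,Q})=\kappa_-(T)$. The main obstacle is exactly this last step: tracking all the signs (it is the outgoing-derivative convention that yields $+\alpha(v_k)$ rather than $-\alpha(v_k)$ on the diagonal of $T$) and making sure the continuity relations split off cleanly; the identification $A_0=\widehat A_F$ and the existence of $M(0)$ are routine but must be stated with care, since Proposition \ref{A_0} is applied with $M(0)$ understood as the one-sided limit from $\bR_-$.
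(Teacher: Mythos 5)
Your proposal is correct and follows essentially the same route as the paper: parametrize ${\bf H}_{\alpha,Q}$ as $A^*\upharpoonright\ker(D\Gamma_1-C\Gamma_0)$ in the triplet \eqref{triples}, identify $A_0$ with the Dirichlet operator and hence with the Friedrichs extension $\widehat A_F$, apply Proposition \ref{A_0}, and then compute $CD^*-DM(0)D^*$, which splits off a zero block of size $m(p-1)$ leaving exactly $T$. The only cosmetic difference is that the paper writes the matrices $C$ and $D$ out explicitly (formulas \eqref{matr.C}--\eqref{matr.D}) whereas you describe their block structure by the type of vertex relation, which yields the same computation.
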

\begin{proof}
To apply the Weyl function technique one should  find the  parametrization  of the Hamiltonian
${\bf H}_{\alpha,Q}$ within the boundary triplet $\Pi$ in accordance with the general  formula \eqref{C-D}.
To this end we introduce  the  block matrices $C,D\in\mathbb{C}^{m_p\times m_p}$ with $m_p:=(p_1+2p_2)m$. First we put
\begin{equation}\label{matr.C}
{\tiny{ C=\begin{pmatrix}
\alpha(0) & \mathbb{O}_m & \dots & \mathbb{O}_m & \mathbb{O}_m & \mathbb{O}_m & \dots & \mathbb{O}_m & \mathbb{O}_m\\
C_m^1 & -1_{11}  & \dots & -1_{p_1-1,1} & -1_{p_1,1} & \mathbb{O}_{(p-1)\times m} & \dots & -1_{p-1,1} & \mathbb{O}_{(p-1)\times m}\\
C_m^2 & -1_{12} & \dots & -1_{p_1-1,2} & -1_{p_1,2} & \mathbb{O}_{(p-1)\times m} & \dots & -1_{p-1,2} & \mathbb{O}_{(p-1)\times m}\\
\hdotsfor{9}\\
C_m^m & -1_{1,m} & \dots & -1_{p_1-1,m} & -1_{p_1,m} & \mathbb{O}_{(p-1)\times m} & \dots & -1_{p-1,m} & \mathbb{O}_{(p-1)\times m}\\
\mathbb{O}_m & \mathbb{O}_m & \dots & \mathbb{O}_m & \mathbb{O}_m & \alpha(v_1) & \dots & \mathbb{O}_m & \mathbb{O}_m\\
\hdotsfor{9}\\
\mathbb{O}_m & \mathbb{O}_m & \dots & \mathbb{O}_m & \mathbb{O}_m & \mathbb{O}_m & \dots & \mathbb{O}_m & \alpha(v_{p_2})
\end{pmatrix}.
}}
\end{equation}
Here $C_m^r(\in\mathbb{C}^{(p-1)\times m})$ is the matrix with  unities in the $r$-th column, and zeros otherwise. Besides,
$1_{k_1,k_2} (\in \mathbb{R}^{(p-1)\times m})$  denotes the matrix with unity on $({k_1,k_2})$-place and zeros otherwise.
Clearly,  the matrices  $\{1_{k_1,k_2}\}$ form the  standard basis  in $\mathbb{R}^{(p-1)\times m}$.
Next we put:
\begin{equation}\label{matr.D}
D=\begin{pmatrix}
I_m & I_m & \dots & I_m & I_m & \mathbb{O}_m & \dots & I_m & \mathbb{O}_m\\
{\bf 0} & \hdotsfor{7} & {\bf 0}\\
\hdotsfor{9}\\
{\bf 0} & \hdotsfor{7} & {\bf 0}\\
\mathbb{O}_m & \mathbb{O}_m & \dots & \mathbb{O}_m & \mathbb{O}_m & I_m & \dots & \mathbb{O}_m & \mathbb{O}_m\\
\hdotsfor{9}\\
\mathbb{O}_m & \mathbb{O}_m & \dots & \mathbb{O}_m & \mathbb{O}_m & \mathbb{O}_m & \dots & \mathbb{O}_m & I_m
\end{pmatrix},
\end{equation}
where $I_m (\in\mathbb{C}^{m\times m})$ denotes the standard identity  $m\times m$ matrix.

Note that the $p-1$ block-rows of the matrix $D$ corresponding to the $p-1$ block-rows of the matrix $C$ including $\widetilde{C}_m^r$, are identically zero.
Also at the last $p_2$ block-rows of the matrix $C$ the matrices $\alpha(v_j), j\in\{1,\ldots, p_2\}$ correspond to the matrices $I_m$
at the last $p_2$ block-rows of the matrix $D$.

It is easily seen that in the boundary triplet $\Pi=\{\mathcal{H},\Gamma_0,\Gamma_1\}$ of the form \eqref{triples}
the Hamiltonian ${\bf H}_{\alpha,Q}$
is given by
\begin{equation}\label{StarHam}
{\bf H}_{\alpha,Q}:=A^*\upharpoonright\ker(D\Gamma_1-C\Gamma_0),
\end{equation}
i.e. the pair of matrices $\{C,D\}$ defines the boundary relation
of the Hamiltonian ${\bf H}_{\alpha,Q}$ in accordance with general  parametrization  \eqref{C-D}.

It follows from \eqref{triples} that $A_0=\bigoplus_{e\in \mathcal{E}}A_{e,D}$ is the Dirichlet realization of the expression $\mathcal{A}$, where $A_{e,D}$ is the Dirichlet realization of the expression $\mathcal{A}_e$ on each edge $e\in \mathcal{E}.$
It is well known that $A_{e,D}$ is the Friedrichs extension of the operator $A_e$, hence $A_0=\bigoplus_{e\in \mathcal{E}}A_{e,D}=\bigoplus_{e\in \mathcal{E}}\widehat{A}_{e,F}=\widehat{A}_F$
is the Friedrichs extension of the operator $A$ (see e.g. \cite[Corollary 3.10]{MalNei2012}).

Now  Proposition \ref{A_0} applies and with account of \eqref{StarHam}  yields
  \begin{equation}\label{stark}
\kappa_-({\bf H}_{\alpha,Q})=\kappa_-(CD^*-DM(0)D^*),
  \end{equation}
with $C$ and $D$ given by \eqref{matr.C} and \eqref{matr.D}, respectively.
Further, one easily checks that
\begin{equation}\label{CD*}
CD^*=DC^*=\text{diag}\{\alpha(0)\quad \mathbb{O}_{m(p-1)}\quad \alpha(v_1)\quad \ldots\quad \alpha(v_{p_2})\}.
\end{equation}

Furthermore,
\begin{align}
& DM(0)D^*= \nonumber \\
& {\tiny{
\begin{pmatrix}
\left(\sum_{j=1}^{p_1}M_{l_j}(0)+\sum_{j=p_1+1}^{p}M_{e_j}^{11}(0)\right) & \mathbb{O}_m & \dots & \mathbb{O}_m & M_{e_{p_1+1}}^{12}(0) & \dots & M_{e_{p}}^{12}(0)\\
\mathbb{O}_m & \hdotsfor{5} & \mathbb{O}_m\\
\vdots & & & & & & \vdots\\
\mathbb{O}_m & \hdotsfor{5} & \mathbb{O}_m\\
M_{e_{p_1+1}}^{21}(0) & & & & M_{e_{p_1+1}}^{22}(0)\\
\vdots & & & & & \ddots\\
M_{e_{p}}^{21}(0) & & & & & & M_{e_{p}}^{22}(0)
\end{pmatrix}. \label{DMD}
}}
\end{align}
Combining \eqref{CD*} with \eqref{DMD} we compute the matrix $\widehat{T}_0 =
CD^*-DM(0)D^*.$  On the other hand, it is easily seen that
$\widehat{T}_0$ is unitarily equivalent to the matrix
  \begin{equation}\label{hatT}
\widehat{T}=
\begin{pmatrix}
T & \mathbb{O}_m & \ldots & \mathbb{O}_m \\
\mathbb{O}_m & \mathbb{O}_m & \ldots & \mathbb{O}_m\\
\vdots & \vdots & \ddots & \vdots\\
\mathbb{O}_m & \mathbb{O}_m & \ldots & \mathbb{O}_m
\end{pmatrix} = T\oplus\mathbb{O}_{m(p-1)}
\in \mathbb{C}^{m_p\times m_p},
\end{equation}
where $T$ is given by \eqref{matr.T}. In turn, combining this fact with \eqref{stark} implies
\begin{equation}
\kappa_{-}({\bf H}_{\alpha,Q})=\kappa_-(CD^*-DM(0)D^*)=\kappa_{-}(\widehat{T})=\kappa_-(T).
\end{equation}
This completes the proof.
\end{proof}
\begin{corollary}\label{wondres}
Assume the conditions of Theorem \ref{th.kappaminus}.
Then:
\begin{equation}
\kappa_-({\bf H}_{\alpha,Q})\leq (p_2+1)m.
\end{equation}
\end{corollary}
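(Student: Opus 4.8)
The plan is to read off the bound directly from the characterization $\kappa_{-}({\bf H}_{\alpha,Q})=\kappa_{-}(T)$ established in Theorem \ref{th.kappaminus}, using only the size of the matrix $T$. Since $T\in\mathbb{M}((p_2+1)m)$ is a Hermitian $(p_2+1)m\times(p_2+1)m$ matrix, the dimension of its negative spectral subspace cannot exceed its order, so $\kappa_-(T)\le (p_2+1)m$. Combining with \eqref{eq.5.90} yields $\kappa_-({\bf H}_{\alpha,Q})=\kappa_-(T)\le (p_2+1)m$, which is exactly the claimed inequality.

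Concretely, I would first invoke Theorem \ref{th.kappaminus}, whose hypotheses coincide with the standing assumptions of this corollary (star graph with $p_1>0$ leads and $p_2\ge 0$ finite edges, Hamiltonian ${\bf H}_{\alpha,Q}$ given by \eqref{delt}--\eqref{deltop}, minimal operator $A\ge 0$, boundary triplet \eqref{triples}), to obtain the identity $\kappa_-({\bf H}_{\alpha,Q})=\kappa_-(T)$ with $T$ the $(p_2+1)m\times(p_2+1)m$ Hermitian matrix displayed in \eqref{matr.T}. Then I would note the elementary fact that for any Hermitian matrix $S$ of order $n$ one has $\kappa_-(S)=\dim E_S(-\infty,0)\le n$, since the negative spectral subspace is a subspace of $\mathbb{C}^n$. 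Applying this with $S=T$ and $n=(p_2+1)m$ finishes the argument.

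There is essentially no obstacle here: the corollary is a one-line consequence of the preceding theorem together with the trivial observation that a Hermitian matrix has at most as many negative eigenvalues (counted with multiplicity) as its order. The only point worth stating carefully is that $T$ is indeed Hermitian — which follows from \eqref{matr.T} since $\alpha(v_k)=\alpha(v_k)^*$ for all $k$, the limit values $M_{l_j}(0)$ and $M_{e_j}^{11}(0)$, $M_{e_j}^{22}(0)$ are self-adjoint (being limit values at $0$ of Nevanlinna functions, cf. \eqref{eq.5.80} and \eqref{II.1.3_03}), and the off-diagonal blocks satisfy $M_{e_j}^{21}(0)=M_{e_j}^{12}(0)^*$ by \eqref{eq.5.80} — so that $\kappa_-(T)$ is well defined and bounded by the order $(p_2+1)m$ of $T$.

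\begin{proof}
By Theorem \ref{th.kappaminus} we have $\kappa_-({\bf H}_{\alpha,Q})=\kappa_-(T)$, where $T\in\mathbb{M}((p_2+1)m)$ is the Hermitian matrix given by \eqref{matr.T}. Since $T=T^*$ acts on $\mathbb{C}^{(p_2+1)m}$, its negative spectral subspace $E_T(-\infty,0)\mathbb{C}^{(p_2+1)m}$ is a subspace of $\mathbb{C}^{(p_2+1)m}$, hence $\kappa_-(T)=\dim E_T(-\infty,0)\le (p_2+1)m$. Therefore $\kappa_-({\bf H}_{\alpha,Q})\le (p_2+1)m$.
\end{proof}
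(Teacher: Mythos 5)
Your proof is correct and is essentially identical to the paper's: it invokes the identity $\kappa_-({\bf H}_{\alpha,Q})=\kappa_-(T)$ from Theorem \ref{th.kappaminus} and then bounds $\kappa_-(T)$ by the order $(p_2+1)m$ of the Hermitian matrix $T$. The extra remark verifying that $T$ is Hermitian is a harmless (and correct) elaboration that the paper leaves implicit.
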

\begin{proof}
The required inequality immediately follows from \eqref{eq.5.90} with account of the size of the matrix $T$.
\end{proof}

The following statement  clarifies Theorem \ref{th.kappaminus} for the case of star graphs without
finite edges, i.e. with $p_2=0$.
   \begin{corollary}\label{cormised}
Assume the conditions of Theorem \ref{th.kappaminus}. Assume in addition that $p_2=0.$
Then:
   \begin{equation}\label{ksglo}
\kappa_-({\bf H}_{\alpha,Q})=\kappa_-\left(\alpha(0)-\sum_{e\in \mathcal{E}_{\infty}}M_e(0)\right)\leq m.
  \end{equation}
In particular, ${\bf H}_{\alpha,Q}\geq 0$ if and only if $\alpha(0)\geq\sum_{e\in \mathcal{E}_{\infty}}M_e(0).$
\end{corollary}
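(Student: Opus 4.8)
The plan is to derive Corollary \ref{cormised} as the degenerate case $p_2=0$ of Theorem \ref{th.kappaminus}, so no new machinery is needed. First I would observe that when the star graph carries no finite edges the index set $\{p_1+1,\dots,p\}$ is empty, so the block matrix $T\in\mathbb{M}((p_2+1)m)$ defined in \eqref{matr.T} collapses to its single $(1,1)$-block, namely the $m\times m$ matrix
\[
T=\alpha(0)-\sum_{j=1}^{p_1}M_{l_j}(0).
\]
Since the leads $e_j\in\mathcal{E}_\infty$, $j\in\{1,\dots,p_1\}$, exhaust $\mathcal{E}_\infty$, this is precisely $T=\alpha(0)-\sum_{e\in\mathcal{E}_\infty}M_e(0)$, where $M_e(0)$ is the limit value at zero of the Weyl function of the Dirichlet operator on $e$; its existence (as well as that of $M(0)$ itself, via the identification $A_0=\widehat A_F$ already established in the proof of Theorem \ref{th.kappaminus}) is ensured under the hypotheses of Theorem \ref{th.kappaminus}, cf. Proposition \ref{A_0} and Theorem \ref{corollary 141}.

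Next I would simply invoke formula \eqref{eq.5.90} of Theorem \ref{th.kappaminus}, which gives $\kappa_-({\bf H}_{\alpha,Q})=\kappa_-(T)$; combining this with the identification of $T$ above yields exactly \eqref{ksglo}. The bound $\kappa_-({\bf H}_{\alpha,Q})\le m$ is then immediate because $T\in\mathbb{M}(m)$, hence $\kappa_-(T)=\dim E_T(-\infty,0)\le\dim\mathbb{C}^m=m$ (alternatively, this is Corollary \ref{wondres} specialized to $p_2=0$). For the last assertion I would note that ${\bf H}_{\alpha,Q}\ge 0$ is equivalent to $\kappa_-({\bf H}_{\alpha,Q})=0$, hence by \eqref{ksglo} to $\kappa_-(T)=0$, i.e.\ to $T=T^*\ge 0$, which reads $\alpha(0)\ge\sum_{e\in\mathcal{E}_\infty}M_e(0)$.

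There is essentially no serious obstacle here; the whole content is the routine specialization of the already-performed computation. The one point deserving a line of justification is that, for $p_2=0$, the diagonal structure exhibited in \eqref{CD*}--\eqref{DMD} degenerates to
\[
\widehat T_0=CD^*-DM(0)D^*=\Bigl(\alpha(0)-\textstyle\sum_{j=1}^{p_1}M_{l_j}(0)\Bigr)\oplus\mathbb{O}_{m(p_1-1)},
\]
so that $\kappa_-(\widehat T_0)=\kappa_-(T)$ directly from \eqref{stark}, without even needing the auxiliary unitary equivalence to $\widehat T$ used in the general case. I expect this bookkeeping to be the only step worth writing out explicitly.
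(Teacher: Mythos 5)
Your proposal is correct and follows the paper's own argument: the paper likewise proves this corollary by noting that for $p_2=0$ the matrix $T$ of \eqref{matr.T} reduces to the single block $\alpha(0)-\sum_{e\in\mathcal{E}_\infty}M_e(0)$ and then invoking \eqref{eq.5.90} from Theorem \ref{th.kappaminus}. Your additional observation that $\widehat T_0$ is already block-diagonal in this case (so the unitary equivalence step of the general proof is superfluous) is a correct but inessential refinement.
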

\begin{proof}
Since the finite edges are missing, the matrix $\widehat{T}$ of the form \eqref{hatT} is reduced to the one block-entry:
$T=\alpha(0)-\sum_{e\in \mathcal{E}_{\infty}}M_e(0)\in\mathbb{C}^{m\times m}.$ Theorem \ref{th.kappaminus} completes the proof.
\end{proof}
\begin{corollary}
Assume the conditions of Corollary \ref{cormised}.
If $\kappa_-({\bf H}_{\text{\emph{kir}}})=m$,  then $M(0)=\sum_{e\in \mathcal{E}_{\infty}}M_e(0)\geq 0.$
\end{corollary}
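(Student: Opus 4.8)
The plan is to obtain the assertion as a direct specialization of Corollary~\ref{cormised} to the Kirchhoff case $\alpha\equiv 0$. First I would observe that ${\bf H}_{\text{kir}}={\bf H}_{0,Q}$ fits the hypotheses of Corollary~\ref{cormised} verbatim: the graph $\mathcal{G}$ is a star graph with $p_1>0$ leads and $p_2=0$ finite edges, and the minimal operator $A$ is non-negative. Hence \eqref{ksglo}, evaluated at $\alpha(0)=\mathbb{O}_m$, gives
\[
\kappa_-({\bf H}_{\text{kir}})=\kappa_-\!\left(\mathbb{O}_m-\sum_{e\in\mathcal{E}_{\infty}}M_e(0)\right)=\kappa_-\bigl(-M(0)\bigr),\qquad M(0):=\sum_{e\in\mathcal{E}_{\infty}}M_e(0).
\]
Here each $M_e(0)$ is the (finite, Hermitian) boundary value at the spectral edge of the Weyl function of the Dirichlet operator on the lead $e$ --- the same object already used in Corollary~\ref{cormised}, whose existence rests on Proposition~\ref{lemma 3.16-1}, Lemma~\ref{lem:3.7} together with the standing assumption $A\ge 0$ --- so that $M(0)=M(0)^{*}\in\mathbb{C}^{m\times m}$.

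Next I would invoke the elementary fact that a Hermitian $m\times m$ matrix has exactly $m$ real eigenvalues counted with multiplicity and that $\kappa_-(\cdot)$ of such a matrix counts only the \emph{strictly} negative ones. The hypothesis $\kappa_-({\bf H}_{\text{kir}})=m$ then forces all $m$ eigenvalues of $-M(0)$ to be strictly negative, i.e. $-M(0)<0$, equivalently $M(0)>0$; in particular $M(0)=\sum_{e\in\mathcal{E}_{\infty}}M_e(0)\ge 0$, which is the claim (indeed a strict inequality holds).

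The argument is essentially a one-liner once Corollary~\ref{cormised} is granted, and I do not expect a genuine obstacle. The only point deserving a word of care is the finiteness and self-adjointness of the limit $M(0)$ of the (operator-monotone on $(-\infty,0)$) Weyl function at $0$, which is exactly the regularity already exploited in the preceding corollaries of this subsection and is automatic on the finite-dimensional auxiliary space $\mathcal{H}_{j}=\mathbb{C}^{m}$.
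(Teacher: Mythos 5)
Your proposal is correct and follows exactly the paper's own argument: specialize \eqref{ksglo} to $\alpha(0)=\mathbb{O}_m$, note that $\kappa_-\bigl(-\sum_{e}M_e(0)\bigr)=\kappa_+\bigl(\sum_{e}M_e(0)\bigr)=m$ for the Hermitian $m\times m$ matrix $\sum_{e}M_e(0)$, and conclude positivity. No issues.
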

\begin{proof}
Since ${\bf H}_{\text{kir}}={\bf H}_{0,Q}$, it follows from \eqref{ksglo} that
\begin{equation}\label{kapkir}
\kappa_-({\bf H}_{\text{kir}})=\kappa_-\left(-\sum_{e\in \mathcal{E}_{\infty}}M_e(0)\right)=\kappa_+\left(\sum_{e\in \mathcal{E}_{\infty}}M_e(0)\right)=m.
\end{equation}
Noting  that $M_e(0)\in\mathbb{C}^{m\times m}$ for each $e$,  we get the required.
\end{proof}
\begin{remark}\label{rem-10}
There exist matrix functions  $\alpha: \mathcal{V}\rightarrow
\mathbb{C}^{m\times m}$  such that $\alpha(v)>0$ for each $v\in \mathcal{V}$
although $\kappa_-({\bf H}_{\alpha,Q})>0$.
Indeed,  it suffices to choose  $\alpha$ satisfying  $0<\alpha(v)<\sum_{e\in \mathcal{E}}M_e(0)$
for every $v\in \mathcal{V}.$
Then $\kappa_-({\bf H}_{\alpha,Q})=\kappa_-(\widehat{T}_0)>0$.
Therefore the estimate $\kappa_-({\bf H}_{\alpha,Q})\leq\sum_{v\in \mathcal{V}}\kappa_-(\alpha(v))$ is violated.
\end{remark}

In the following corollary we consider a special  case of the star graph of general form.
\begin{corollary}
Assume the conditions of Theorem \ref{th.kappaminus}. Assume also that $p_2=1$, $\det T_{11}\neq 0$, where
\begin{equation}\label{Tkj}
\begin{gathered}
T_{11}=\alpha(v_0)-\sum_{e\in \mathcal{E}_{\infty}}M_e(0)-M_e^{11}(0), \\
T_{12}=-M_e^{12}(0), \quad T_{22}=\alpha(v_1)-M_e^{22}(0).
\end{gathered}
\end{equation}
Then:
\begin{equation}\label{kap1plkap2}
\kappa_-({\bf H}_{\alpha,Q})=\kappa_-(T_{11})+\kappa_-\left(T_{22}-T_{12}^*T_{11}^{-1}T_{12}\right),
\end{equation}
\end{corollary}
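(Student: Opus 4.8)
The plan is to deduce the statement from Theorem~\ref{th.kappaminus} together with the classical Haynsworth inertia additivity formula for Hermitian block matrices with an invertible diagonal block. By Theorem~\ref{th.kappaminus} one has $\kappa_-({\bf H}_{\alpha,Q}) = \kappa_-(T)$, and when $p_2 = 1$ the matrix $T$ from \eqref{matr.T} is the $2m\times 2m$ Hermitian matrix
\begin{equation*}
T = \begin{pmatrix} T_{11} & T_{12}\\ T_{12}^* & T_{22}\end{pmatrix},
\end{equation*}
with $T_{11}, T_{12}, T_{22}$ as in \eqref{Tkj}; here I use that the $(2,1)$-block $-M_{e}^{21}(0)$ equals $T_{12}^* = -M_e^{12}(0)^*$, which follows from the relation $M_{e}^{21}(z) = M_{e}^{12}(\overline z)^*$ in \eqref{eq.5.80} by passing to the boundary value at $z = 0$. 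Thus it remains to establish the purely linear-algebraic identity $\kappa_-(T) = \kappa_-(T_{11}) + \kappa_-\bigl(T_{22} - T_{12}^* T_{11}^{-1} T_{12}\bigr)$ for a Hermitian block matrix whose $(1,1)$-block is invertible.

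To prove it I would use the hypothesis $\det T_{11}\neq 0$ to form the invertible block-triangular matrix $L := \left(\begin{smallmatrix} I_m & \mathbb{O}_m\\ -T_{12}^* T_{11}^{-1} & I_m\end{smallmatrix}\right)$ and check, by direct multiplication, the block $\mathrm{LDL}^*$ factorization
\begin{equation*}
L\, T\, L^* = T_{11}\ \oplus\ \bigl(T_{22} - T_{12}^* T_{11}^{-1} T_{12}\bigr).
\end{equation*}
Since $L$ is invertible, $L T L^*$ and $T$ are congruent, so by the Sylvester law of inertia they have the same number of negative eigenvalues. The number of negative eigenvalues of the block-diagonal matrix on the right is obviously $\kappa_-(T_{11}) + \kappa_-\bigl(T_{22} - T_{12}^* T_{11}^{-1} T_{12}\bigr)$, and combining this with $\kappa_-({\bf H}_{\alpha,Q}) = \kappa_-(T)$ gives \eqref{kap1plkap2}.

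This argument contains no genuine difficulty; the only points deserving a line of care are the verification that the off-diagonal blocks of $T$ are adjoint to each other (so that $T$ is Hermitian and the Schur complement of $T_{11}$ is indeed $T_{22} - T_{12}^* T_{11}^{-1} T_{12}$) and the observation that the standing assumption $\det T_{11}\neq 0$ is precisely what makes $T_{11}^{-1}$, hence the matrix $L$ and the Schur complement, well-defined. The heart of the proof is the one-line congruence $L T L^* = T_{11}\oplus(T_{22} - T_{12}^* T_{11}^{-1} T_{12})$, everything else being bookkeeping.
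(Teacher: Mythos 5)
Your proposal is correct and follows essentially the same route as the paper: both reduce the claim to Theorem \ref{th.kappaminus} and then apply inertia additivity for the Hermitian block matrix $T$ with invertible $(1,1)$-block (the paper simply invokes the ``Silvestre criterion,'' whereas you spell out the underlying congruence $L\,T\,L^* = T_{11}\oplus(T_{22}-T_{12}^*T_{11}^{-1}T_{12})$ and Sylvester's law of inertia). Your extra care in checking that the off-diagonal blocks are mutually adjoint via $M_e^{21}(z)=M_e^{12}(\overline z)^*$ is a welcome detail the paper leaves implicit.
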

\begin{proof}
In this case the matrix $T$ takes the form:
\begin{equation}
\begin{pmatrix}
\alpha(0)-\sum_{e\in \mathcal{E}_{\infty}}M_e(0)-M_e^{11}(0) & -M_e^{12}(0) \\
-M_e^{12}(0)^* & \alpha(v_1)-M_e^{22}(0)
\end{pmatrix}.
\end{equation}
Applying the Silvestre criterion (see \cite[Ch. 4]{DerMal17})
with account \eqref{Tkj} we arrive at \eqref{kap1plkap2}.
\end{proof}
\begin{remark}

\item[\;\;\rm (i)] Assume that $\det T_{12}\neq 0$ and $\alpha(v_1)=M_e^{22}(0)$, i.e. $T_{22}=\mathbb{O}_m.$ Then $\kappa_-({\bf H}_{\alpha,Q})=\text{rank}(T_{11})$;

\item[\;\;\rm (ii)] Assume that $\det T_{12}\neq 0$ and $\alpha(0)=\sum_{e\in \mathcal{E}_{\infty}}M_e(0)+M_e^{11}(0)$, i.e. \\ $T_{11}=\mathbb{O}_m.$
Then $\kappa_-({\bf H}_{\alpha,Q})=\text{rank}(T_{22})$.
\end{remark}
\begin{corollary}\label{cor.5.20}
Assume the conditions of Theorem \ref{th.kappaminus}.
Let also $Q=\mathbb{O}_m.$ Then:

\item[\;\;\rm (i)] $\kappa_-({\bf H}_{\alpha,0})=\kappa_-(T_1)$, where $T_1$ is given by:
\begin{equation}\label{matrt-1}
T_1=\begin{pmatrix}
\alpha(0)+\sum_{k=1}^{p_2}a_k & -a_{1} & \dots & -a_{p_2}\\
-a_{1} & \alpha(v_1)+a_{1} & \ldots & \mathbb{O}_m\\
\vdots & \ldots & \ddots & \ldots\\
-a_{p_2} & \mathbb{O}_m & \ldots & \alpha(v_{p_2})+a_{p_2}
\end{pmatrix},
\end{equation}
and $a_k:=\frac{1}{|e_{p_1+k}|}\cdot I_m$ for $k\in\{1,\ldots,p_2\}$;

\item[\;\;\rm (ii)] The following inequality holds
\begin{equation}\label{kapQz}
\kappa_-({\bf H}_{\alpha,0}) = \kappa_-(T_1) \leq \sum_{k=0}^{p_2}\kappa_-\left(\alpha(v_k)\right).
\end{equation}
\end{corollary}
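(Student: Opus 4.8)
The plan is to specialize Theorem \ref{th.kappaminus} to the potential-free case and to evaluate the Weyl functions occurring in \eqref{matr.T} at the origin. For $Q=\mathbb{O}_m$ the minimal operator $A=\bigoplus_{e\in\mathcal{E}}A_e$ is non-negative, since on each edge $(A_ef,f)=\int_e|f'|^2\,dx\ge0$ on the minimal domain; hence Theorem \ref{th.kappaminus} applies and $\kappa_-({\bf H}_{\alpha,0})=\kappa_-(T)$ with $T$ as in \eqref{matr.T}, and it remains to compute the entries of $T$. On every lead $e_j\in\mathcal{E}_\infty$ the transfer solutions are $C_j(x,z)=\cos(\sqrt z\,x)I_m$ and $S_j(x,z)=(\sqrt z)^{-1}\sin(\sqrt z\,x)I_m$; by \eqref{eq:M1_M2New} this gives $N_{1,j}(z)=(2i\sqrt z)^{-1}I_m$, $N_{2,j}(z)=\tfrac12I_m$, hence $M_{l_j}(z)=N_{1,j}(z)^{-1}N_{2,j}(z)=i\sqrt z\,I_m$ and $M_{l_j}(0)=\lim_{\varepsilon\downarrow0}M_{l_j}(-\varepsilon)=0$. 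On a finite edge $e_{p_1+k}$ of length $\ell_k:=|e_{p_1+k}|$ the same solutions and \eqref{eq.5.80} give $M^{11}_{e_{p_1+k}}(z)=M^{22}_{e_{p_1+k}}(z)=-\sqrt z\cot(\sqrt z\,\ell_k)I_m$ and $M^{12}_{e_{p_1+k}}(z)=M^{21}_{e_{p_1+k}}(z)=\sqrt z\,(\sin(\sqrt z\,\ell_k))^{-1}I_m$; letting $z=-\varepsilon\downarrow0$ (so that $\cot$ and $\sin$ become $\coth$ and $\sinh$, respectively) and recalling $a_k:=\ell_k^{-1}I_m$, one gets $M^{11}_{e_{p_1+k}}(0)=M^{22}_{e_{p_1+k}}(0)=-a_k$ and $M^{12}_{e_{p_1+k}}(0)=M^{21}_{e_{p_1+k}}(0)=a_k$. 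Substituting these limit values into \eqref{matr.T} turns $T$ into exactly the matrix $T_1$ of \eqref{matrt-1}, which proves (i).

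For (ii) I would compare $T_1$ with the block-diagonal matrix $D_1:=\diag\bigl(\alpha(v_0),\alpha(v_1),\dots,\alpha(v_{p_2})\bigr)$, where $\alpha(v_0)=\alpha(0)$. From \eqref{matrt-1} one has $T_1-D_1=\sum_{k=1}^{p_2}R_k$, where $R_k\in\mathbb{M}((p_2+1)m)$ is the matrix whose only nonzero $m\times m$ blocks are in the $(0,0)$, $(0,k)$, $(k,0)$ and $(k,k)$ positions, being equal to $a_k$, $-a_k$, $-a_k$ and $a_k$, respectively. Since $a_k=\ell_k^{-1}I_m>0$, for every vector $x=(x_0,\dots,x_{p_2})^{\top}$ with $x_i\in\mathbb{C}^m$ we have $(R_kx,x)=(a_k(x_0-x_k),x_0-x_k)\ge0$, so $R_k\ge0$ and hence $T_1\ge D_1$. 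Since $\kappa_-$ is monotone decreasing with respect to the order of selfadjoint matrices (mini-max principle), $T_1\ge D_1$ yields $\kappa_-(T_1)\le\kappa_-(D_1)=\sum_{k=0}^{p_2}\kappa_-(\alpha(v_k))$, which together with (i) gives \eqref{kapQz}.

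The computations in the first paragraph are elementary; the only delicate points are taking the one-sided limits $z=-\varepsilon\downarrow0$ correctly (so that the trigonometric expressions turn into the finite hyperbolic ones) and checking $A\ge0$ so that Proposition \ref{A_0}, hence Theorem \ref{th.kappaminus}, is applicable. The real content of the estimate is the inequality $T_1-D_1\ge0$: it says that the ''discrete Laplacian'' contribution coming from the finite edges is a non-negative correction, so it cannot create negative squares beyond those already encoded in the vertex couplings $\{\alpha(v_k)\}_{k=0}^{p_2}$. Combined with Corollary \ref{cormised} this also exhibits situations in which the general Bargmann-type bound \eqref{eq:barg_estim_if_AN>0} is not sharp.
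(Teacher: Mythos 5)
Your proposal is correct and follows essentially the same route as the paper: specialize Theorem \ref{th.kappaminus}, substitute the limit values $M_{l_j}(0)=\mathbb{O}_m$, $M^{11}_{e_j}(0)=M^{22}_{e_j}(0)=-a_k$, $M^{12}_{e_j}(0)=M^{21}_{e_j}(0)=a_k$ into $T$, and then split $T_1$ into $\diag\{\alpha(v_0),\dots,\alpha(v_{p_2})\}$ plus a non-negative remainder, concluding by the mini-max principle. The only cosmetic difference is that you verify the non-negativity of the remainder directly via the quadratic form $(a_k(x_0-x_k),x_0-x_k)\ge 0$, whereas the paper exhibits a congruence reducing it to $\diag\{\mathbb{O}_m,a_1,\dots,a_{p_2}\}$; both are equally valid.
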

\begin{proof}
(i). In the case of $Q=\mathbb{O}_m$ one  has:
\begin{equation}\label{eq.5.130}
M_{l_j}(0)=\mathbb{O}_m, \quad M_{e_j}(0)=-\frac{1}{|e_j|}\begin{pmatrix} I_m & -I_m\\ -I_m & I_m \end{pmatrix}, \quad j\in\{1,...,p\}.
\end{equation}

Using \eqref{hatT} we get:
\begin{equation}\label{DMD-0}
\widehat{T}_0
=\begin{pmatrix}
\alpha(0)+\sum_{k=1}^{p_2}a_k & \mathbb{O}_m & \dots & \mathbb{O}_m & -a_{1} & \dots & -a_{p_2}\\
\mathbb{O}_m & \hdotsfor{5} & \mathbb{O}_m\\
\vdots & & & & & & \vdots\\
\mathbb{O}_m & \hdotsfor{5} & \mathbb{O}_m\\
-a_{1} & & & & \alpha(v_1)+a_{1}\\
\vdots & & & & & \ddots\\
-a_{p_2} & & & & & & \alpha(v_{p_2})+a_{p_2}
\end{pmatrix}.
\end{equation}
On the other hand,  the matrix $\widehat{T}_0=CD^*-DM(0)D^*$ is unitarily equivalent to the matrix $T_1\oplus\mathbb{O}_{m(p-1)}$,
where $T_1$ is given by \eqref{matrt-1}. Therefore $\kappa_-({\bf H}_{\alpha,0})=\kappa_-(CD^*-DM(0)D^*)=\kappa_-(T_1).$

(ii). Note that  \eqref{kapQz} is a special case of  inequality \eqref{eq:barg_estim_if_AN>0}. However,
we provide here its simple algebraic proof.
Clearly,  $T_1=T_2 + T_3$, where
\begin{equation}\label{matrt2t3}
\begin{gathered}
T_2=\text{diag}\{\alpha(0),\alpha(v_1),\ldots,\alpha(v_{p_2})\}, \\
T_3=\begin{pmatrix}
\sum_{k=1}^{p_2}a_k & -a_{1} & \dots & -a_{p_2}\\
-a_{1} & a_{1}\\
\vdots & & \ddots\\
-a_{p_2} & & & a_{p_2}
\end{pmatrix}.
\end{gathered}
\end{equation}
One easily finds a non-singular $G\in  \mathbb{C}^{m\times m}$ such that
$G^*T_3G = \text{diag}\{\mathbb{O}_m, a_1, \ldots, a_{p_2}\} \ge 0$.
Therefore  due to the mini-max principle, $ \kappa_-(T_1) \le  \kappa_-(T_2) =
\sum_{k=0}^{p_2}\kappa_-\left(\alpha(v_k)\right)$.
Now  \eqref{kapQz} is immediate from (i).
  \end{proof}
\begin{remark}\label{remfr5}

\item[\;\;\rm (i)] Here we present simple  examples showing  that in contrast to the   equality
$\kappa_-({\bf H}_{\alpha,0})=\kappa_-(T_1)$  estimates  \eqref{kapQz} and \eqref{eq:barg_estim_if_AN>0}
are  not sharp.
To this end  consider a special  case of the star graph with one finite edge, i.e. $p_2=1$.
In this case matrix $T_1$ from \eqref{matrt-1}
turns into
  \begin{equation}\label{unbT-1}
T_1=\begin{pmatrix}
\alpha(0)+a_1 & -a_1 \\
-a_1 & \alpha(v_1)+a_1
\end{pmatrix}.
\end{equation}
Assume that $\kappa_-(\alpha(0))>0$ and $\kappa_-(\alpha(v_1))>0.$ However  the matrix $T_1$
is positively definite provided that
\begin{equation}\label{conda1}
\alpha(0)+a_1>0, \quad \alpha(v_1)+a_1 - a_1(\alpha(0)+a_1)^{-1}a_1 >0.
\end{equation}
For example, letting  $m=2$  and
  \begin{equation}
\alpha(0):=\begin{pmatrix} -1 & 0 \\ 0 & 5 \end{pmatrix}, \quad
\alpha(v_1):=\begin{pmatrix} 6 & 0 \\ 0 & -1 \end{pmatrix}, \quad
a_1:=\begin{pmatrix} 2 & 0 \\ 0 & 2 \end{pmatrix},
\end{equation}
we get
\begin{equation}
T_1=\begin{pmatrix} 1 & 0 & -2 & 0 \\
0 & 7 & 0 & -2 \\
-2 & 0 & 8 & 0 \\
0 & -2 & 0 & 1 \end{pmatrix}>0, \quad \text{i.e.} \quad \kappa_-(T_1)=0.
\end{equation}
Summing up we get $\kappa_-({\bf H}_{\alpha,0}) = \kappa_-(T_1) =0$, while
$\kappa_-(\alpha(0)) + \kappa_-(\alpha(v_1)) =2$.

\item[\;\;\rm (ii)]
Inequality \eqref{kapQz} turns into the equality when $\alpha(0)$ and $\alpha(v_1)$ are negatively definite, i.e. $\alpha(0)<0$ and $\alpha(v_1)<0$,
and the matrices $a_k$ are sufficiently small, i.e. the following conditions hold:
\begin{equation}
\alpha(0)+\sum_{k=1}^{p_2}a_k<0, \quad \alpha(v_k)+a_k<0, \quad k\in\{1,\ldots, p_2\}.
\end{equation}
This means that the finite edges $e_{p_1+1},\ldots, e_p$ have sufficiently large lengths.
In this case both inequalities \eqref{kapQz} and \eqref{eq:barg_estim_if_AN>0} turn into the equalities.
\end{remark}

The following simple statement  demonstrates that in the scalar case the matrix $T_1$ given by   \eqref{unbT-1}
cannot be positive.
\begin{corollary}
Let  $m=1$ and let  $\alpha(0)<0$, and  $\alpha(v_1)<0$. Then the matrix $T_1$ (see \eqref{unbT-1}) is not
positively definite.
  \end{corollary}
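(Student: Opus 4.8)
The statement to prove is: for $m=1$, $\alpha(0)<0$ and $\alpha(v_1)<0$, the $2\times 2$ matrix
\[
T_1=\begin{pmatrix}\alpha(0)+a_1 & -a_1\\ -a_1 & \alpha(v_1)+a_1\end{pmatrix}
\]
(with $a_1=1/|e_{p_1+1}|>0$ a positive scalar in the scalar case) is not positive definite. The plan is to argue purely algebraically via the Sylvester criterion, which characterizes positive definiteness of a symmetric $2\times2$ matrix by the positivity of the $(1,1)$-entry together with the positivity of the determinant. So I would first observe that positive definiteness of $T_1$ would force $\det T_1>0$.

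Next I would compute the determinant directly:
\[
\det T_1=(\alpha(0)+a_1)(\alpha(v_1)+a_1)-a_1^2
=\alpha(0)\alpha(v_1)+a_1\bigl(\alpha(0)+\alpha(v_1)\bigr).
\]
The first term $\alpha(0)\alpha(v_1)$ is positive, being a product of two negative numbers, but the second term $a_1(\alpha(0)+\alpha(v_1))$ is strictly negative since $a_1>0$ and both $\alpha(0),\alpha(v_1)<0$. This alone does not settle the sign of $\det T_1$, so one has to look more carefully at the $(1,1)$-entry as well. The cleanest route: if $T_1>0$ then in particular $\alpha(0)+a_1>0$, i.e. $a_1>-\alpha(0)=|\alpha(0)|>0$; similarly $a_1>|\alpha(v_1)|$. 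Under these two inequalities I would estimate
\[
\alpha(0)\alpha(v_1)+a_1(\alpha(0)+\alpha(v_1))
= |\alpha(0)|\,|\alpha(v_1)| - a_1\bigl(|\alpha(0)|+|\alpha(v_1)|\bigr)
< a_1\,|\alpha(v_1)| + a_1\,|\alpha(0)| - a_1\bigl(|\alpha(0)|+|\alpha(v_1)|\bigr)=0,
\]
where in the strict inequality I replaced $|\alpha(0)|$ by $a_1$ in the first factor (using $|\alpha(0)|<a_1$) and $|\alpha(v_1)|$ by $a_1$ in a symmetric way, or more simply bound $|\alpha(0)|\,|\alpha(v_1)|<a_1|\alpha(v_1)|$. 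Hence $\det T_1<0$, contradicting $T_1>0$. Therefore $T_1$ is not positive definite.

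I do not anticipate a genuine obstacle here; the only thing requiring a little care is that the bound $\det T_1<0$ needs the two necessary conditions $a_1>|\alpha(0)|$ and $a_1>|\alpha(v_1)|$ (otherwise $T_1$ already fails positivity at the $(1,1)$ or $(2,2)$ entry, and we are done trivially). So the proof naturally splits into the trivial case, where $\alpha(0)+a_1\le 0$ and the $(1,1)$-entry already violates positivity, and the main case $a_1>\max\{|\alpha(0)|,|\alpha(v_1)|\}$, where the determinant computation above gives $\det T_1<0$. In either case $T_1\not>0$, which is the assertion. As a one-line alternative I could note that $T_1\binom{1}{1}=\binom{\alpha(0)}{\alpha(v_1)}$, so $\bigl(T_1\binom{1}{1},\binom{1}{1}\bigr)=\alpha(0)+\alpha(v_1)<0$, exhibiting an explicit vector on which the quadratic form is negative — this is the shortest argument and I would present it as the main proof, with the determinant discussion as a remark.
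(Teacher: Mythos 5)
Your proposal is correct. The argument you designate as the main one — testing the quadratic form on the vector $(1,1)^T$, which gives $\bigl(T_1\binom{1}{1},\binom{1}{1}\bigr)=\alpha(0)+\alpha(v_1)<0$ — is a genuinely different and more elementary route than the paper's. The paper argues by contradiction via the Sylvester criterion: assuming $T_1>0$ forces $\alpha(0)+a_1>0$ and $(\alpha(0)+a_1)(\alpha(v_1)+a_1)>a_1^2$, whence also $\alpha(v_1)+a_1>0$; but since $\alpha(0),\alpha(v_1)<0$ each factor lies strictly between $0$ and $a_1$, so the product is $<a_1^2$, a contradiction. Your determinant-based backup is essentially this same Sylvester computation, just organized as a case split with an explicit sign estimate on $\det T_1$, and it is also correct. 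What the test-vector argument buys is brevity and a slightly stronger conclusion: it exhibits a vector on which the form is strictly negative, hence shows $\kappa_-(T_1)\ge 1$ outright rather than merely excluding positive definiteness (which a priori could also fail through a kernel). The paper's route, on the other hand, is the one that generalizes naturally to the quantitative conditions \eqref{conda1} used elsewhere in that remark.
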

\begin{proof}
Assuming the contrary, we note that in accordance with the Silvestre criterion,
 $T_1$ is positively definite if and only if
\begin{equation}\label{unbineq}
\begin{cases}
  \alpha(0)+a_1>0, \\
\left(\alpha(0) + a_1\right)\left(\alpha(v_1) + a_1\right)-a_1^2>0.
\end{cases}
\end{equation}
Combining both  inequalities in  \eqref{unbineq} yields  $\alpha(v_1)+a_1>0$. In turn, combining this inequality
with the first one in  \eqref{unbineq} and noting that $\alpha(0)<0$ and  $\alpha(v_1)<0$, implies
$0 < \left(\alpha(0)+a_1\right)\left(\alpha(v_1)+a_1\right)<a_1^2$. This contradicts
 the second inequality in \eqref{unbineq} and completes the proof.
\end{proof}
  \begin{remark}\label{rem-l}
However,  in the scalar case there are simple examples of matrices $T_1$ showing that both
inequalities \eqref{kapQz} and \eqref{eq:barg_estim_if_AN>0} are not sharp. Namely, one can define
$T_1$ such   that $\kappa_-(T_1)=1$ while  $\kappa_-(\alpha(0))=\kappa_-(\alpha(v_1))=1$, i.e.
$\kappa_-(\alpha(0))+\kappa_-(\alpha(v_1))=2$.
It  suffices  to put $\alpha(0)=\alpha(v_1)=-1$, and $a_1=2$.
   \end{remark}
\begin{corollary}
Assume the conditions of Corollary \ref{cor.5.20}.
Let also $\alpha=\mathbb{O}_m.$ Then $\kappa_-(T_1)=0.$
\end{corollary}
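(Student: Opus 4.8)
The plan is to read this off Corollary~\ref{cor.5.20}(i), for which it remains only to check that the free Kirchhoff Laplacian on $\mathcal{G}$ is non-negative. Setting $\alpha = \mathbb{O}_m$ and $Q = \mathbb{O}_m$ in \eqref{deltop}, the Hamiltonian ${\bf H}_{\alpha,0} = {\bf H}_{\text{kir}}$ is, by Proposition~\ref{lem_form_t_H_alpha,Q}, the self-adjoint operator associated with the form
\[
\mathfrak{t}_{0,0}[f] = \int_{\mathcal{G}} |f'(x)|^2\, dx, \qquad \dom\mathfrak{t}_{0,0} = W^{1,2}(\mathcal{G}; \mathbb{C}^m),
\]
which is manifestly non-negative. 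Hence ${\bf H}_{\alpha,0} \ge 0$, so $\kappa_-({\bf H}_{\alpha,0}) = 0$, and Corollary~\ref{cor.5.20}(i) then yields $\kappa_-(T_1) = \kappa_-({\bf H}_{\alpha,0}) = 0$, as claimed.

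Alternatively, I would argue purely algebraically on the matrix $T_1$ of \eqref{matrt-1}. For $\alpha = \mathbb{O}_m$ the diagonal summand $T_2$ in the decomposition $T_1 = T_2 + T_3$ of \eqref{matrt2t3} vanishes, so $T_1 = T_3$. Each $a_k = |e_{p_1+k}|^{-1} I_m$ is positive definite, and $T_3$ is the sum over $k \in \{1,\dots,p_2\}$ of matrices whose only nonzero blocks are $a_k$ in positions $(0,0)$ and $(k,k)$ and $-a_k$ in positions $(0,k)$ and $(k,0)$; restricted to the corresponding $2m$-dimensional block, each summand is of the form $\left(\begin{smallmatrix} I_m \\ -I_m \end{smallmatrix}\right) a_k \left(\begin{smallmatrix} I_m & -I_m \end{smallmatrix}\right) \ge 0$. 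Thus $T_3 \ge 0$, so $\kappa_-(T_1) = \kappa_-(T_3) = 0$. This is of course exactly the specialization of \eqref{kapQz} with all $\alpha(v_k) = \mathbb{O}_m$.

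Since both routes are very short and invoke only results already established, there is no real obstacle here. The single point requiring a line of care is to confirm that the standing hypotheses of Corollary~\ref{cor.5.20} are in force — in particular $A \ge 0$, which is immediate once $Q = 0$ — so that the explicit formula \eqref{matrt-1} for $T_1$ is legitimately available.
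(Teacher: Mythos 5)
Your proposal is correct and matches the paper's (implicit) argument: the corollary is stated without proof because it is the immediate specialization of \eqref{kapQz} to $\alpha(v_k)=\mathbb{O}_m$, which is exactly your second route via $T_1=T_3\ge 0$. The alternative operator-theoretic route through Proposition \ref{lem_form_t_H_alpha,Q} and Corollary \ref{cor.5.20}(i) is also sound, including the check that $Q=0$ gives $A\ge 0$ so that the hypotheses of Theorem \ref{th.kappaminus} hold.
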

\begin{remark}
The Hamiltonian ${\bf H}_{0,0}$ was treated  in \cite[Theorem 1]{BerLug10}.
\end{remark}

\subsection{Scattering matrix for the star graph}\label{sect.ScatMatr}

Here we find the scattering matrix $\{S\left({\bf H}_{\alpha}, {\bf H}_D; \lambda\right)\}_{\lambda\in\mathbb{R}_+}$
for the pair $\{{\bf H}_{\alpha}, {\bf H}_D\}$
where ${\bf H}_D:={\bf H}_{D,Q}$ is the Dirichlet operator on $\mathcal{G},$ and ${\bf H}_{\alpha}:={\bf H}_{\alpha, Q}$
is the Hamiltonian of the form \eqref{deltop} with the potential matrix $Q$ of the form \eqref{aux-3.1.1}.

As earlier for each lead $l_j\in \mathcal{E}_{\infty}$, $j \in\{1,\ldots,p_1\}$, we consider the minimal operator $A_j:=A_{l_j}$ associated with \eqref{LQ}
in $L^2(l_j;\mathbb{C}^m)$ on the domain \eqref{DOMa} with $v=0.$ Herewith $\mathrm{n}_{\pm}(A_j)=m,$ and
\begin{equation}
\mathcal{H}_j:=\mathbb{C}^m, \quad \Gamma_0^{(j)}f=f(0), \quad \Gamma_1^{(j)}f=f'(0), \quad f\in\dom(A_j^*),
\end{equation}
is a boundary triplet $\Pi_j:=\{\mathcal{H}_j,\Gamma_0^{(j)},\Gamma_1^{(j)}\}$ for $A_j^*.$
The corresponding Weyl function $M_j:=M_{l_j}$ is given by \eqref{eq.5.70} and takes the values in $\mathcal{B}(\mathcal{H}_j).$

For each edge $e_k\in \mathcal{E}_{\text{fin}}$, $k \in\{1,\dots,p_2\}$, we associate the symmetric operator $T_k:=T_{e_k}(\supset A_{e_k, \text{min}})$ on the domain
\begin{equation}\label{b.c.NEW}
\dom(T_k)=\{f\in\dom(A_{e_k, \text{max}}): f(0)=f'(0)=f'(v_k)=0\},
\end{equation}
where $A_{e_k,\text{max}}$ is given by \eqref{dom_H_max}.
In this case $v_o:=0,$ and $v_{in}:=v_k,$ $k\in\{1,\ldots,p_2\}.$
Notice that
\begin{equation}
\dom(T_k^*)=\{f\in\dom(A_{e_k, \text{max}}): f'(v_k)=0\}.
\end{equation}
It is easily seen that the operator $T_k$ is an extension of the minimal operator $A_{e_k}$ given by
\eqref{DOMa-1}, and $\mathrm{n}_{\pm}(T_k)=m.$
We take the boundary triplet $\widehat{\Pi}_k:=\{\widehat{\mathcal{H}}_k,\widehat{\Gamma}_0^{(k)},\widehat{\Gamma}_1^{(k)}\}$ for $T_k^*$ in the following form:
\begin{equation}
\widehat{\mathcal{H}}_k:=\mathbb{C}^m, \quad \widehat{\gG}^{(k)}_0 f = f(0), \quad \widehat{\gG}^{(k)}_1 f = f'(0), \quad f \in \dom(T^*_{k}).
\end{equation}
The corresponding Weyl function is of the form
\begin{equation}\label{Wfedg}
M_{k}(z):=M_{e_k}(z)=-C'_k(v_k,z)C_k(v_k,z)^{-1}
\end{equation}
and takes the values in $\kB(\mathcal{H}_k)$.
Here $C_k(x,z)$, $k\in\{1,\ldots,p_2\}$ is the solution of the following problem:
\begin{equation}
\begin{gathered}
\mathcal{A}_kC_k(x,z)=zC_k(x,z), \quad x\in e_k, \quad z\in\mathbb{C}, \\
C(0,z)=I_m, \quad C'(0,z)=\mathbb{O}_m.
\end{gathered}
\end{equation}

Further, $\widehat{A}:=(\bigoplus_{j=1}^{p_1} A_j)\bigoplus(\bigoplus_{k=1}^{p_2} T_k)$ is the symmetric operator on the graph $\mathcal{G}$
with the deficiency indices $\mathrm{n}_{\pm}(\widehat{A})=mp.$
It is easily seen that the triplet $\widehat{\Pi}:=\{\widehat{\mathcal{H}},\widehat{\Gamma}_0, \widehat{\Gamma}_1\},$ where
\begin{equation}\label{botrnew}
\begin{gathered}
\widehat{\mathcal{H}}:=\left(\bigoplus_{j=1}^{p_1}\mathcal{H}_j\right)\bigoplus\left(\bigoplus_{k=1}^{p_2}\widehat{\mathcal{H}}_k\right)=
\bigoplus_{j=1}^{p_1+p_2} \mathbb{C}^m=\mathbb{C}^{mp}, \\
\widehat{\Gamma}_a:=\left(\bigoplus_{j=1}^{p_1}\Gamma_a^{(j)}\right)\bigoplus\left(\bigoplus_{k=1}^{p_2}\widehat{\Gamma}_a^{(k)}\right), \quad a\in\{0,1\},
\end{gathered}
\end{equation}
is  the boundary triplet for $\widehat{A}^*.$ Herewith ${\bf H}_D:=\widehat{A}^*\upharpoonright \ker(\widehat{\Gamma}_0)$ is given by
\begin{equation}
{\bf H}_D=\left(\bigoplus_{j=1}^{p_1} {\bf H}_{j,D}\right)\bigoplus\left(\bigoplus_{k=1}^{p_2} T_{k,DN}\right),
\end{equation}
where $T_{k,DN}$ is Sturm-Liouville operator with the Dirichlet boundary condition at the vertex 0, and with the Neumann boundary condition at the loose end $v_k.$
The corresponding Weyl function $M(\cdot)$ is given by
\begin{equation}\label{Wefnew}
\begin{gathered}
M(z) = \diag\{M_1(z),\ldots, M_{p}(z)\}, \quad z \in \rho({\bf H}_D), \\
M_j(z):=M_{l_j}(z)=N_{1,j}(z)^{-1}N_{2,j}(z), \quad j\in\{1,\ldots,p_1\}, \\
M_j(z):=M_{e_{j-p_1}}(z), \quad j\in\{p_1+1,\ldots,p\}.
\end{gathered}
\end{equation}
The Hamiltonian ${\bf H}_{\alpha}$ is the extension of the operator $\widehat{A}$ and with respect to the boundary triplet \eqref{botrnew} is given by
  \begin{equation}\label{eq.H-alpha=A_theta}
{\bf H}_{\alpha}:=\widehat{A}^*\upharpoonright \ker(\widehat{D}\widehat{\Gamma}_1-\widehat{C}\widehat{\Gamma}_0), \quad \widehat{C}\widehat{D}^*=\widehat{D}\widehat{C}^*,
  \end{equation}
where $\widehat{C},\widehat{D}\in\mathbb{C}^{mp\times mp}$ are the following block matrices:
\begin{equation}\label{mC}
\widehat{C} = \kbordermatrix{
       & 1      & 2            & 3            & \dots & p_1            & p_1+1          & \dots & p\\
1      &\ga(0)     & \mathbb{O}_m & \mathbb{O}_m & \dots & \mathbb{O}_m & \mathbb{O} & \dots & \mathbb{O}_m\\
2      &I_m    & -I_m        & \mathbb{O}_m & \dots & \mathbb{O}_m & \mathbb{O} & \dots & \mathbb{O}_m\\
3      &I_m    & \mathbb{O}_m & -I_m        & \dots & \mathbb{O}_m & \mathbb{O} & \dots & \mathbb{O}_m\\
\vdots & \vdots & \vdots       & \vdots & \ddots& \vdots       & \vdots     & \ddots & \vdots\\
p_1      & I_m & \mathbb{O}_m & \mathbb{O}_m & \dots & -I_m        &\mathbb{O} & \dots & \mathbb{O}_m\\
p_1+1    &I_m & \mathbb{O}_m  & \mathbb{O}_m & \dots & \mathbb{O}   & -I_m     & \dots & \mathbb{O}_m\\
\vdots & \vdots & \vdots       & \vdots & \ddots& \vdots       & \vdots     & \ddots & \vdots\\
p &I_m & \mathbb{O}_m & \mathbb{O}_m & \dots & \mathbb{O}   & \mathbb{O}    & \dots & -I_m
},
\end{equation}
and
\begin{equation}\label{mD}
\widehat{D} = \kbordermatrix{
       & 1      & 2      & 3      & \dots  & p_1      & p_1+1    & \dots  & p\\
1      & I_m   & I_m   & I_m   & \dots  & I_m   & I_m   & \dots  & I_m\\
2      & \bO_m  & \bO_m  & \bO_m  & \dots  & \bO_m  & \bO_m  & \dots  & \bO_m\\
3      & \bO_m  & \bO_m  & \bO_m  & \dots  & \bO_m  & \bO_m  & \dots  & \bO_m\\
\vdots & \vdots & \vdots & \vdots & \ddots & \vdots & \vdots & \ddots & \vdots\\
p_1      & \bO_m  & \bO_m  & \bO_m  & \dots  & \bO_m  & \bO_m  & \dots  & \bO_m\\
p_1+1    & \bO_m  & \bO_m  & \bO_m  & \dots  & \bO_m  & \bO_m  & \dots  & \bO_m\\
\vdots & \vdots & \vdots & \vdots & \ddots & \vdots & \vdots & \ddots & \vdots\\
p    & \bO_m  & \bO_m  & \bO_m  & \dots  & \bO_m  & \bO_m  & \dots  & \bO_m
}.
\end{equation}

Further, let $S_j(x,z),$ $j\in\{1,\ldots,p_1\}$, be the matrix solution of the following problem:
   \begin{equation}
\begin{gathered}
\mathcal{A}_j S_j(x,z)=zS_j(x,z), \quad x\in l_j, \quad z\in\mathbb{C}, \\
S(0,z)=\mathbb{O}_m, \quad S'(0,z)=I_m.
\end{gathered}
\end{equation}
Furthermore, let $E_{p_1}\in\mathbb{C}^{p_1\times p_1}$ be the following matrix of the rank one:
\begin{equation}\label{EpEp-1}
E_{p_1}
:= \kbordermatrix{
 & 1 & \dots & p_1\\
1&  1 & \dots & 1\\
\vdots &\vdots &\ddots&\vdots\\
p_1 & 1 & \dots & 1
}:
\bC^{p_1} \longrightarrow \bC^{p_1}.
\end{equation}

With  these notations the main result of this subsection is given by the following theorem.
\begin{theorem}\label{th.ScS}
Let $\kG$ be the star graph consisting of $p_1(>0)$ leads and $p_2(\geq 0)$ edges, $p_1+p_2:=p$, and let
$Q:=\bigoplus_{j=1}^p Q_j\in L^1(\mathcal{G};\mathbb{C}^{m\times m}).$
Let $\widehat{\Pi}=\{\mathcal{H}, \widehat{\Gamma}_0, \widehat{\Gamma}_1\}$ be the boundary triplet of the form \eqref{botrnew},
and let $M(\cdot)$ be the corresponding Weyl function of the form \eqref{Wefnew}. Then the following holds:

\item[\;\;\rm(i)] The Hilbert space $L^2(\bR_+,d\gl;\kH_{ac})$, where
\begin{equation}
\kH_{ac} := \bigoplus^{p_1}_{j=1} \kH_j = \underbrace{\bC^m \oplus \ldots \oplus \bC^m}_{p_1} = \bC^{m\cdot p_1} \subseteq \kH
\end{equation}
performs a spectral representation of ${\bf H}_D^{ac}$;

\item[\;\;\rm(ii)]
With respect of the spectral representation $L^2(\bR_+,d\gl;\kH_{ac})$ the scattering matrix $\{S({\bf H}_\ga, {\bf H}_D;\gl)\}_{\gl\in\bR_+}$
of the scattering system $\{{\bf H}_\ga, {\bf H}_D\}$ admits the following  representation for a.e. $\lambda\in\mathbb{R}_+$:
\begin{equation}\label{scatmatr}
\begin{split}
S({\bf H}_\ga,&{\bf H}_D;\gl)\\
=&I_{\kH_{ac}}+\frac{i}{2\sqrt{\lambda}}
(N_1(\gl)^*)^{-1}\left((\ga(0)-K(\gl))^{-1}\otimes E_{p_1}\right)\cdot N_1(\gl)^{-1},
\end{split}
\end{equation}
where
\begin{equation}\label{KandN}
\begin{gathered}
K(\gl) := \sum_{j=1}^{p_1} M_j(\gl+i0): \bC^m \longrightarrow \bC^m,\quad \gl \in \bR_+,\\
N_{1,j}(\lambda):=\frac{I_m}{2i\sqrt{\lambda}}+\frac{1}{2i\sqrt{\lambda}}\int_{l_j}e^{i\sqrt{\lambda}t}Q_j(t)S_j(t,\lambda)dt, \quad \lambda\in\mathbb{R}_+, \\
N_1(\gl) :=  \bigoplus_{j=1}^{p_1} N_{1,j}(\gl) :\kH_{ac} \longrightarrow \kH_{ac},\quad \gl \in \bR_+.
\end{gathered}
\end{equation}
\end{theorem}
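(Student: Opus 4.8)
The plan is to derive \eqref{scatmatr} from the abstract scattering formula \eqref{scatformula} of Theorem~\ref{scattering}, applied to the boundary triplet $\widehat\Pi$ of \eqref{botrnew}, the extension $A_0 = \widehat A^*\!\upharpoonright\ker\widehat\Gamma_0 = {\bf H}_D$, and the extension $A_\Theta = {\bf H}_\alpha$ parametrized by \eqref{eq.H-alpha=A_theta} via the pair $\{\widehat C,\widehat D\}$. First I would invoke Theorem~\ref{corollary 141}(i): for each lead $l_j$ the non-tangential limit $M_j(\lambda+i0) = N_{1,j}(\lambda)^{-1}N_{2,j}(\lambda)$ exists for every $\lambda\in\bR_+$ with $\im M_j(\lambda+i0) = \tfrac{1}{4\sqrt\lambda}(N_{1,j}(\lambda)^*N_{1,j}(\lambda))^{-1}$, while for each finite edge the Weyl function $M_{e_k}(z) = -C'_k(v_k,z)C_k(v_k,z)^{-1}$ of \eqref{Wfedg} is real (self-adjoint) on $\bR_+$ away from a discrete set of poles, hence $\im M_{e_k}(\lambda+i0) = 0$ for a.e.\ $\lambda$. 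Consequently, in the block-diagonal decomposition \eqref{Wefnew}, $\im M(\lambda) = \bigoplus_{j=1}^{p_1}\im M_j(\lambda)\,\oplus\,\mathbb{O}_{mp_2}$, so that $\kH_\lambda = \overline{\ran(\im M(\lambda))} = \bigoplus_{j=1}^{p_1}\kH_j = \kH_{ac}$, which is part (i), and which justifies that the scattering matrix acts on $L^2(\bR_+,d\lambda;\kH_{ac})$.

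Next I would compute $(\Theta - M(\lambda))^{-1}$ using \eqref{themM-Prelim}: since $\Theta = \ker(\widehat C\ \ -\widehat D)$ wait --- with the convention $\Theta=\ker(\widehat D\,\widehat\Gamma_1 - \widehat C\,\widehat\Gamma_0)$ one has $(\Theta - M(\lambda))^{-1} = (\widehat C - \widehat D M(\lambda))^{-1}\widehat D$. Because $\widehat D$ has only its first block-row nonzero (all entries $I_m$) and $\widehat C$ has the structure displayed in \eqref{mC} (first block-row $\alpha(0)$ in position $(1,1)$, and below it the "continuity" rows $I_m$ in the first column, $-I_m$ on the diagonal), the matrix $\widehat C - \widehat D M(\lambda)$ is explicitly invertible: one solves the resulting block linear system by eliminating the lower block-rows, which forces all components to be expressed through the top one and produces the Schur complement $\alpha(0) - \sum_{j=1}^{p}M_j(\lambda) = \alpha(0) - K(\lambda) - \sum_{k=1}^{p_2}M_{e_k}(\lambda)$. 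Here I would absorb the finite-edge contributions: with the specific triplet $\widehat\Pi_k$ chosen so that ${\bf H}_\alpha$ on a finite edge reduces, after the continuity elimination, only to the lead block --- more precisely, the key algebraic identity is that $(\Theta - M(\lambda))^{-1}$, sandwiched by $\sqrt{\im M(\lambda)}$ which annihilates the finite-edge blocks, has the effective form $(\alpha(0)-K(\lambda))^{-1}\otimes E_{p_1}$ acting on $\kH_{ac} = \bC^m\otimes\bC^{p_1}$, the rank-one matrix $E_{p_1}$ of \eqref{EpEp-1} arising exactly because the only coupling among the $p_1$ lead blocks passes through the single common vertex (the first block-row of $\widehat D$).

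Finally I would substitute into \eqref{scatformula}: $S({\bf H}_\alpha,{\bf H}_D;\lambda) = I_{\kH_{ac}} + 2i\sqrt{\im M(\lambda)}\,(\Theta-M(\lambda))^{-1}\sqrt{\im M(\lambda)}$. Using \eqref{eq:3.51} edgewise, $\sqrt{\im M_j(\lambda)} = \tfrac{1}{2\lambda^{1/4}}\,|N_{1,j}(\lambda)^{-1}|$ up to a partial isometry; invoking Remark~\ref{rem:2.11} with the factorization $\im M(\lambda) = Z(\lambda)^*Z(\lambda)$, $Z(\lambda) = \tfrac{1}{2\lambda^{1/4}}N_1(\lambda)^{-1}$ (with $N_1(\lambda) = \bigoplus_{j=1}^{p_1}N_{1,j}(\lambda)$ as in \eqref{KandN}), the polar-decomposition isometry $V(\lambda)$ is unitary here since $\im M_j(\lambda)$ is invertible, and one may equally work in the $\kZ_\lambda$-representation. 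This replaces the two $\sqrt{\im M(\lambda)}$ factors by $Z(\lambda)$ and $Z(\lambda)^*$, yielding $I_{\kH_{ac}} + 2i\,Z(\lambda)(\Theta-M(\lambda))^{-1}Z(\lambda)^* = I_{\kH_{ac}} + \tfrac{i}{2\sqrt\lambda}(N_1(\lambda)^*)^{-1}\big((\alpha(0)-K(\lambda))^{-1}\otimes E_{p_1}\big)N_1(\lambda)^{-1}$, which is \eqref{scatmatr}. The main obstacle is the bookkeeping in the middle step: carrying out the block Gaussian elimination on $\widehat C - \widehat D M(\lambda)$ cleanly enough to see that, after conjugation by the degenerate weight $\sqrt{\im M(\lambda)}$, the finite-edge Weyl blocks $M_{e_k}(\lambda)$ either cancel or are absorbed into $K(\lambda)$ in the stated way, and that the surviving $p_1\times p_1$ block structure is precisely the rank-one tensor $(\,\cdot\,)\otimes E_{p_1}$; getting the scalar prefactor $\tfrac{i}{2\sqrt\lambda}$ right (from $2i\cdot(2\lambda^{1/4})^{-2}$) is then routine.
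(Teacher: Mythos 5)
Your proposal follows essentially the same route as the paper's proof: the abstract formula \eqref{scatformula} applied in the triplet $\widehat\Pi$ with ${\bf H}_D=\widehat A^*\upharpoonright\ker\widehat\Gamma_0$, the identity $(\Theta-M(z))^{-1}=(\widehat C-\widehat D M(z))^{-1}\widehat D$ inverted by the Frobenius/Schur-complement computation to produce the rank-one tensor structure $(\cdot)\otimes E_p$, and the factorization $\im M_j(\gl)=\tfrac{1}{4\sqrt{\gl}}\bigl(N_{1,j}(\gl)^*N_{1,j}(\gl)\bigr)^{-1}$ from \eqref{eq:3.51} to convert $\sqrt{\im M(\gl)}$ into the factors $(N_1(\gl)^*)^{-1}$ and $N_1(\gl)^{-1}$ up to a unitary. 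On the one point you flag as uncertain: the finite-edge Weyl functions do \emph{not} cancel --- the Schur complement is genuinely $\alpha(0)-\sum_{j=1}^{p}M_j(\gl)$ including the real blocks $M_{e_k}(\gl)$, and it is only the outer factors $\sqrt{\im M(\gl)}$, supported on $\kH_{ac}$, that cut $E_p$ down to $E_{p_1}$; the paper's own proof uses this full sum for $K$, so for $p_2>0$ the definition $K(\gl)=\sum_{j=1}^{p_1}M_j(\gl+i0)$ in \eqref{KandN} must be read as including the finite-edge contributions, exactly as your elimination produces.
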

\begin{proof}
It follows from \eqref{themM-Prelim} that
\begin{equation}\label{themM-1}
\left(\Theta - M(z)\right)^{-1}=\left(\widehat{C}-\widehat{D}M(z)\right)^{-1}\widehat{D}.
\end{equation}

Taking into account \eqref{Wefnew} and \eqref{mC}-\eqref{mD} we get
\begin{displaymath}
\begin{split}
&\gL(z) :=\widehat{C}-\widehat{D}M(z) =\\
&\kbordermatrix{
       & 1      & 2            & \dots  & p_1       & \dots  & p\\
1      & \ga(0)-M_1(z) & -M_2(z) &  \dots  & -M_{p_1}(z)   & \dots  & -M_{p}(z)\\
2      & I_m  & -I_m  &  \dots  & \bO_m  & \dots  & \bO_m\\
\vdots & \vdots & \vdots & \ddots & \vdots & \vdots & \vdots \\
p_1      & I_m  & \bO_m  &  \dots  & -I_m  & \dots  & \bO_m\\
\vdots & \vdots & \vdots & \vdots & \vdots & \ddots & \vdots \\
p    & I_m  & \bO_m  &  \dots  & \bO_m  &  \dots  & -I_m
}.
\end{split}
\end{displaymath}
Let
\begin{equation}
\begin{gathered}
\gL_{11}(z): \kL_1 \longrightarrow \kL_1, \quad \gL_{12}(z): \kL_2 \longrightarrow \kL_1, \\
\gL_{21}: \kL_1 \longrightarrow \kL_2, \quad \gL_{22}: \kL_2 \longrightarrow \kL_2,
\end{gathered}
\end{equation}
where
\begin{displaymath}
\begin{split}
&\gL_{11}(z) := \ga(0)-M_1(z), \\
&\gL_{12}(z) :=
\kbordermatrix{
       & 2      & 3      & \dots  & p_1      & \dots  & p\\
     & -M_2(z) & -M_3(z)   & \dots  & -M_{p_1}(z)   & \dots  & -M_{p}(z)
}, \\
&\gL_{21} :=
\kbordermatrix{
       & \\
2      & I_m \\
3      & I_m\\
\vdots & \vdots\\
p_1      & I_m \\
\vdots & \vdots \\
p    & I_m
}, \quad
\gL_{22} :=
\kbordermatrix{
       & 2      & 3      & \dots  & p_1      & \dots  & p\\
2      & -I_m  & \bO_m  & \dots  & \bO_m  & \dots  & \bO_m\\
3      & \bO_m  & -I_m  & \dots  & \bO_m  & \dots  & \bO_m\\
\vdots & \vdots & \vdots & \ddots & \vdots & \vdots & \vdots\\
p_1      & \bO_m  & \bO_m  & \dots  & -I_m  & \dots  & \bO_m\\
\vdots & \vdots & \vdots & \vdots & \vdots & \ddots & \vdots \\
p    & \bO_m  & \bO_m  & \dots  & \bO_m  & \dots  & -I_m
},
\end{split}
\end{displaymath}
and
\begin{equation}
\kL_1 := \bC^m, \quad
\kL_2 := \bigoplus^{p}_{j=2} \cH_j = \underbrace{\bC^m \oplus \bC^m \oplus \ldots \oplus \bC^m}_{p-1}.
\end{equation}
Notice that $\gL_{22} = -1_{\kL_2}$.
Using the notation above we find
\begin{equation}
\widehat{C}-\widehat{D}M(z) =
\begin{bmatrix}
\gL_{11}(z) & \gL_{12}(z)\\
\gL_{21} & \gL_{22}
\end{bmatrix}:
\begin{matrix}
\kL_1 \\
\oplus\\
\kL_2
\end{matrix}
\longrightarrow
\begin{matrix}
\kL_1 \\
\oplus\\
\kL_2
\end{matrix}.
\end{equation}
The Schur complement $(\widehat{C}-\widehat{D}M(z))/\gL_{22}$ takes the form
\begin{equation}
\begin{split}
(\widehat{C}-\widehat{D}M(z))/\gL_{22} =& \gL_{11}(z)-\gL_{12}(z)\gL_{22}^{-1}\gL_{21}\\
=& \ga(0)-\sum^{p}_{j=1} M_j(z) = \ga(0) - K(z), \quad z\in\mathbb{C}_+.
\end{split}
\end{equation}
Hence the inverse matrix $(\widehat{C}-\widehat{D}M(z))^{-1}$ for  $z\in\mathbb{C}_+$ can be computed by the Frobenius formula:
\begin{equation}\label{Frobfor}
\begin{split}
(\widehat{C}&-\widehat{D}M(z))^{-1}=\\
&\begin{bmatrix}
(\ga(0) - K(z))^{-1}                     & -(\ga(0) - K(z))^{-1}\gL_{12}(z)\gL_{22}^{-1}\\
-\gL_{22}^{-1}\gL_{21}(\ga(0)-K(z))^{-1} & \gL_{22}^{-1} + \gL_{22}^{-1}\gL_{21}(\ga(0)-K(z))^{-1}\gL_{12}(z)\gL_{22}^{-1}
\end{bmatrix}.
\end{split}
\end{equation}
Since
$
-\gL_{12}(z)\gL_{22}^{-1}  = \gL_{12}(z)
$
and
$-\gL_{22}^{-1}\gL_{21} = \gL_{21},$ $z \in \bC_+$,  formula \eqref{Frobfor} is simplified to
\begin{equation}\label{LamD-0}
\begin{split}
(\widehat{C}&-\widehat{D}M(z))^{-1}= \\
&\begin{bmatrix}
(\ga(0) - K(z))^{-1}               & (\ga(0) - K(z))^{-1}\cdot \gL_{12}(z)\\
\Lambda_{21}\cdot (\ga(0) - K(z))^{-1} & -1_{\kL_2} + \Lambda_{21}\cdot (\ga(0) - K(z))^{-1}\cdot\Lambda_{12}(z)
\end{bmatrix}.
\end{split}
\end{equation}

Clearly, the matrix $\widehat{D}$ admits the representation
$
\widehat{D} =
\begin{bmatrix}
\widehat{D}_{11} & \widehat{D}_{12} \\
\widehat{D}_{21} & \widehat{D}_{22}
\end{bmatrix}
$
where
\begin{equation}\label{D-dec}
\begin{split}
\widehat{D}_{11} :=& I_m : \kL_1 \longrightarrow \kL_1, \quad
\widehat{D}_{12} :=
\kbordermatrix{
 & 2    & \dots & p\\
 & I_m & \dots & I_m
}: \kL_2 \longrightarrow \kL_1,\\
\widehat{D}_{21} :=&
\kbordermatrix{
  &\\
2 & \bO_m\\
3 & \bO_m\\
\vdots& \vdots\\
p & \bO_m
}: \kL_1 \longrightarrow \kL_2, \quad
\widehat{D}_{22} :=
\mathbb{O}_{m(p-1)}: \kL_2 \longrightarrow \kL_2.
\end{split}
\end{equation}
Combining  \eqref{LamD-0}  with  \eqref{D-dec} one gets
\begin{equation}\label{CDfactD}
\begin{split}
(\widehat{C}&-\widehat{D}M(z))^{-1}\widehat{D} \\
=&
{\tiny{
\begin{bmatrix}
(\ga(0) - K(z))^{-1}               & (\ga(0) - K(z))^{-1}\cdot \gL_{12}(z)\\
\Lambda_{21}\cdot (\ga(0) - K(z))^{-1} & -1_{\kL_2} + \Lambda_{21}\cdot (\ga(0) - K(z))^{-1}\cdot\Lambda_{12}(z)
\end{bmatrix}
}}
{\tiny{
\begin{bmatrix}
\widehat{D}_{11} & \widehat{D}_{12} \\
\widehat{D}_{21} & \widehat{D}_{22}
\end{bmatrix}}}\\
=&
\begin{bmatrix}
(\ga(0) - K(z))^{-1} & (\ga(0) - K(z))^{-1} \times \widehat{D}_{12}\\
\Lambda_{21}\cdot (\ga(0) - K(z))^{-1} & \Lambda_{21}\cdot (\ga(0) - K(z))^{-1}\cdot \widehat{D}_{12}
\end{bmatrix} \\
=& (\ga(0)-K(z))^{-1} \otimes E_{p},
\end{split}
  \end{equation}
where ''$(\ga(0) - K(z))^{-1} \times \widehat{D}_{12}$'' means that each entry of $\widehat{D}_{12}$
is multiplied from the left by $(\ga(0) - K(z))^{-1}$ and
$
E_{p}:\bC^{p} \longrightarrow \bC^{p}
$
is given by \eqref{EpEp-1}.

To pass to the limit in this formula as $z\to\!\succ \gl \in \bR_+$ we first note  that in
accordance with \eqref{Wfedg} for each $j \in\{p_1+1,\ldots,p\}$ the Weyl function
$M_j(\cdot)$ is meromorphic, hence for all $\gl \in \bR_+$ except a discrete set $\Omega_j$  of positive poles,
the limit
\begin{equation}\label{Mjlime}
M_j(\gl) := \lim_{y\downarrow 0}M_j(\gl + iy) = \lim_{y\downarrow 0}M_{e_{j-p_1}}(\gl + iy), \quad j \in\{p_1+1,\ldots,p\},
\end{equation}
exists and  is  selfadjoint, i.e. $\text{Im}(M_{e_{j-p_1}}(\gl)) = 0$.

Further, since $Q_j(\cdot)\in L^1(l_j;\mathbb{C}^{m\times m})$, then due to  Theorem
\ref{corollary 141} the limit
\begin{equation}\label{Mjlim}
M_j(\gl) := \lim_{y\downarrow 0}M_j(\gl + iy), \quad j \in\{1,2,\ldots,p_1\},
\end{equation}
exists for each $\gl \in \bR_+$ and is invertible.
Therefore passing to the limit in \eqref{CDfactD} as $z\to\!\succ \gl$ and taking relations \eqref{Mjlime}, \eqref{Mjlim}
into account, we derive
\begin{equation}\label{zettolam}
\begin{split}
\left(\widehat{C}-\widehat{D}M(\lambda)\right)^{-1}\widehat{D} :=\left(\widehat{C}-\widehat{D}M(\lambda+i0)\right)^{-1}\widehat{D} \\
= \left(\alpha(0)-\sum_{j=1}^p M_j(\lambda+i0)\right)^{-1}\otimes E_p, \quad
\lambda\in\mathbb{R}_+\setminus \Omega,
\end{split}
\end{equation}
where $\Omega := \cup_{j=p_1+1}^p \Omega_j$.  Moreover, Theorem \ref{corollary 141} (see
\eqref{eq:3.51}) also ensures that $\text{Im}(M_j(\gl))$, $j \in\{1,\ldots,p_1\}$, is
invertible for each $\gl \in \bR_+$  and
  \begin{equation}\label{imGMNP}
\text{Im}(M_j(\lambda)):=\text{Im}(M_j(\lambda+i0))=\frac{1}{4\sqrt{\lambda}}(N_{1,j}\left(\lambda)^*N_{1,j}(\lambda)\right)^{-1},
\end{equation}
 where $N_{1,j}(\lambda)$ is given by \eqref{KandN}.  Hence
  \begin{equation}\label{Im12}
\sqrt{\text{Im}(M_j(\lambda))}=\frac{1}{2\sqrt[4]{\lambda}}U_j(\lambda)N_{1,j}^*(\lambda)^{-1},
 \end{equation}
where  $U_j(\cdot)$ is a measurable  family of unitary operators.

Setting $U(\lambda):=\bigoplus_{j=1}^p U_j(\lambda)$,  inserting expressions \eqref{themM-1},
\eqref{zettolam} and \eqref{Im12} into the general formula for the scattering matrix
\eqref{scatformula}, and taking into account that the scattering matrix is defined uniquely  up to
 a unitary factor we arrive at \eqref{scatmatr}.
   \end{proof}
   \begin{remark}
We emphasize   that a relatively simple formula \eqref{scatmatr} for the scattering matrix
$\{S({\bf H}_\ga, {\bf H}_D;\gl)\}_{\gl\in\bR_+}$
has been obtained due to  the treating of the Hamiltonians ${\bf H}_D$ and ${\bf H}_{\alpha}$ as extensions of
the intermediate extension $\widehat{A}(\supset A_{\text{min}})$ instead of the minimal operator  $A_{\text{min}}$.
    \end{remark}

The stationary formulation of the scattering problem on a quantum graph was first considered by Gerasimenko and Pavlov \cite{GerPav88}
(see also \cite{Gerasim1988} and \cite{BerKuch13}).

\begin{corollary}\label{cor:5.9}
Assume the conditions of  Theorem \ref{th.ScS}, and let $\mathcal G$ be  the star graph with one lead, i.e. $p_1=1$, $p_2=0$.
Then $L^2(\bR_+,d\gl;\bC^m)$ is a spectral representation of ${\bf H}_D^{ac}$ such that
   \begin{equation}\label{eq.6.194}
S({\bf H}_\ga, {\bf H}_D;\gl)=
I_m + \frac{i}{2\sqrt{\lambda}}\left(N_1(\lambda)\left(\alpha(0)-N_1(\lambda)^{-1}N_2(\lambda)\right)N_1(\lambda)^*\right)^{-1}.
  \end{equation}
    \end{corollary}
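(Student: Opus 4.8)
The plan is to obtain Corollary~\ref{cor:5.9} as a direct specialization of Theorem~\ref{th.ScS} to the case $p_1=1$, $p_2=0$, combined with the explicit description of the Weyl function of the Dirichlet operator on a single lead furnished by Theorem~\ref{corollary 141}.

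First I would dispose of the spectral representation claim: by Theorem~\ref{th.ScS}(i) the direct integral $L^2(\bR_+,d\gl;\kH_{ac})$ with $\kH_{ac}=\bigoplus_{j=1}^{p_1}\kH_j$ performs a spectral representation of ${\bf H}_D^{ac}$, and for $p_1=1$ this collapses to $\kH_{ac}=\kH_1=\bC^m$, giving the first assertion. For the scattering matrix, I would substitute $p_1=1$, $p_2=0$ into \eqref{scatmatr}. Three simplifications occur simultaneously. The rank-one matrix $E_{p_1}$ of \eqref{EpEp-1} becomes the scalar $E_1=(1)$, so the Kronecker factor $(\ga(0)-K(\gl))^{-1}\otimes E_{p_1}$ reduces to $(\ga(0)-K(\gl))^{-1}$; the direct sum $N_1(\gl)=\bigoplus_{j=1}^{p_1}N_{1,j}(\gl)$ reduces to the single block $N_{1,1}(\gl)$, which I relabel $N_1(\gl)$ in accordance with \eqref{KandN} and \eqref{eq: 55}; and, since there are no finite edges, $K(\gl)=\sum_{j=1}^{p_1}M_j(\gl+i0)=M_1(\gl+i0)$. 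By Theorem~\ref{corollary 141}(i), formula \eqref{eq: 141}, one has $M_1(\gl+i0)=N_1(\gl)^{-1}N_2(\gl)$ with $N_2$ given by \eqref{eq: 56}, and by Lemma~\ref{lem:3.7} the matrices $N_1(\gl)$, $N_2(\gl)$ are invertible for every $\gl\in\bR_+$. Hence \eqref{scatmatr} becomes
\[
S({\bf H}_\ga,{\bf H}_D;\gl)=I_m+\frac{i}{2\sqrt{\lambda}}\,(N_1(\gl)^*)^{-1}\bigl(\ga(0)-N_1(\gl)^{-1}N_2(\gl)\bigr)^{-1}N_1(\gl)^{-1}.
\]

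Finally, applying the identity $(XYZ)^{-1}=Z^{-1}Y^{-1}X^{-1}$ for invertible matrices with $X=N_1(\gl)$, $Y^{-1}=\ga(0)-N_1(\gl)^{-1}N_2(\gl)$, $Z=N_1(\gl)^*$, I would rewrite the middle factor as $\bigl(N_1(\gl)(\ga(0)-N_1(\gl)^{-1}N_2(\gl))N_1(\gl)^*\bigr)^{-1}$, which is exactly \eqref{eq.6.194}. There is no genuine obstacle here: this is essentially a bookkeeping corollary. The only points that warrant a line of care are verifying that the factor $\otimes E_{p_1}$ and the direct-sum structure of $N_1$ really do collapse when $p_1=1$, and that every inverse occurring is legitimate for $\gl\in\bR_+$ — both guaranteed by Theorem~\ref{corollary 141} and Lemma~\ref{lem:3.7}. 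One should also recall, as in the proof of Theorem~\ref{th.ScS}, that the scattering matrix is determined only up to a unitary factor, so \eqref{eq.6.194} is the canonical representative with respect to the spectral representation $L^2(\bR_+,d\gl;\bC^m)$.
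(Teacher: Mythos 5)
Your proposal is correct and follows essentially the same route as the paper: specialize \eqref{scatmatr} to $p_1=1$ (so that $E_{p_1}=1$ and $K(\gl)=M(\gl)$), substitute $M(\gl)=N_1(\gl)^{-1}N_2(\gl)$ from \eqref{eq: 141}, and regroup the three inverse factors into a single inverse. The only blemish is the notational slip where you set $Y^{-1}=\ga(0)-N_1(\gl)^{-1}N_2(\gl)$ when you mean $Y$ itself equals that matrix; the final regrouping is nonetheless correct.
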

\begin{proof}
If $p_1=1$, then $E_{p_1} = 1$, and $\alpha(0)-K(\gl) = \alpha(0)-M(\gl)$, $\gl \in \bR_+$. In accordance with  \eqref{scatmatr}
\begin{equation}\label{eq:5.56}
S({\bf H}_\ga, {\bf H}_D;\gl)
=I_m+\frac{i}{2\sqrt{\lambda}}
(N_1(\gl)^*)^{-1} \left(\alpha(0)-M(\gl)\right)^{-1} N_1(\gl)^{-1}.
\end{equation}
Using \eqref{eq: 141} we find  $M(\gl) := M(\gl + i0)  = N_1(\gl)^{-1}N_2(\gl),$  \ $\gl \in \bR_+$.
Inserting  this expression  into \eqref{eq:5.56} we arrive at \eqref{eq.6.194}.
\end{proof}
Let $m = 1,$ $p_1>0$, $p_2=0$, and $\alpha=\mathbb{O}_m$. In order to distinguish matrix-valued quantities from scalar ones we use instead capital letters lower case characters. For instance the potential $Q_j(\cdot)$ is denoted by $q_j(\cdot)$, the Weyl function $M_j(\cdot)$
by $m_j(\cdot)$, $j \in\{1,2\ldots,p_1\}$, etc.
We set
\begin{equation}
m(z) = \bigoplus^{p_1}_{j=1} m_j(z), \quad z \in \bC_\pm,
\quad \mbox{and} \quad
m(\gl) = \bigoplus^{p_1}_{j=1} m_j(\gl), \quad \gl \in \bR_+.
\end{equation}
Notice that now
$\alpha-K(\gl) = -\sum^{p_1}_{j=1}m_j(\gl), \quad \gl \in \bR_+.$

In the scalar case we denote the quantities $N_{1,j}(\cdot)$ and $N_{2,j}(\cdot)$ by $n_{1,j}(\cdot)$ and $n_{2,j}(\cdot)$,
$ j \in\{1,2,\ldots,p_1\}$, respectively. However, we set
\begin{equation}
\begin{split}
N_1(\cdot) :=& \diag\{n_{1,1}(\cdot),n_{1,2}(\cdot),\ldots,n_{1,p_1}(\cdot)\}\\
N_2(\cdot) :=& \diag\{n_{2,1}(\cdot),n_{2,2}(\cdot),\ldots,n_{2,p_1}(\cdot)\}.
\end{split}
\end{equation}

\begin{corollary}\label{sc-pot-q}
Let $m =1$ and $\alpha=\mathbb{O}_m$. Then the following statements hold:

\item[\;\;\rm(i)] $L^2(\bR_+,d\gl;\bC^{p_1})$ is a spectral representation of ${\bf H}_D^{ac}$
and for $\gl \in \bR_+$
\begin{equation}
S({\bf H}_\ga, {\bf H}_D;\gl) = I_{p_1}-\frac{i}{2\sqrt{\lambda}\sum^{p_1}_{j=1}m_j(\gl)}
(N_1(\gl)^*)^{-1}\; E_{p_1} \;N_1(\gl)^{-1}.
\end{equation}

\item[\;\;\rm(ii)] If $q_j(\cdot) = q(\cdot)$, $j \in\{1,2,\ldots,p_1\}$, then
$L^2(\bR_+,d\gl;\bC^{p_1})$ is a spectral representation of ${\bf H}_D^{ac}$ such that
\begin{equation}\label{eq:5.64}
S({\bf H}_\ga, {\bf H}_D;\gl) = I_{p_1}-2i\frac{\im m(\gl)}{p_1\; m(\gl)}\;E_{p_1},\quad \gl \in \bR_+,
\end{equation}
 where $m(\cdot) := m_j(\cdot)$, $j \in\{1,2,\ldots,p_1\}$.
\end{corollary}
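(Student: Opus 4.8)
The plan is to specialize the general formula \eqref{scatmatr} from Theorem \ref{th.ScS} to the scalar case $m=1$ with $\alpha=\mathbb{O}_m$ and $p_2=0$, exactly as was done in Corollary \ref{cor:5.9} for $p_1=1$. First I would note that for $m=1$ all the matrix-valued objects $M_j(\cdot)$, $N_{1,j}(\cdot)$, $\alpha(0)$, etc., become scalar functions, written $m_j(\cdot)$, $n_{1,j}(\cdot)$, $0$ respectively; the matrix $E_{p_1}$ of \eqref{EpEp-1} is unchanged (it is the genuine $p_1\times p_1$ all-ones matrix), and $N_1(\lambda)=\operatorname{diag}\{n_{1,1}(\lambda),\ldots,n_{1,p_1}(\lambda)\}$. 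Since $\alpha=0$ and $p_2=0$, the quantity $\alpha(0)-K(\lambda)$ from \eqref{KandN} reduces to $-\sum_{j=1}^{p_1}m_j(\lambda)$, which is a scalar. Substituting these into \eqref{scatmatr}, the tensor product $(\alpha(0)-K(\lambda))^{-1}\otimes E_{p_1}$ becomes the scalar $-\bigl(\sum_{j=1}^{p_1}m_j(\lambda)\bigr)^{-1}$ times the matrix $E_{p_1}$, which yields statement (i) directly.

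For statement (ii), when all potentials coincide, $q_j(\cdot)=q(\cdot)$, one has $m_j(\cdot)=m(\cdot)$ and $n_{1,j}(\cdot)=n_{1}(\cdot)$ for every $j$, so $N_1(\lambda)=n_1(\lambda)I_{p_1}$ is a scalar multiple of the identity. Then $(N_1(\lambda)^*)^{-1}E_{p_1}N_1(\lambda)^{-1}=|n_1(\lambda)|^{-2}E_{p_1}$, and $\sum_{j=1}^{p_1}m_j(\lambda)=p_1\,m(\lambda)$. So the correction term in (i) becomes $-\dfrac{i}{2\sqrt{\lambda}\,p_1\,m(\lambda)\,|n_1(\lambda)|^{2}}E_{p_1}$. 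To bring this to the form \eqref{eq:5.64} I would invoke relation \eqref{imGMNP} (the scalar case of \eqref{eq:3.51} from Theorem \ref{corollary 141}), namely $\operatorname{Im} m(\lambda)=\dfrac{1}{4\sqrt{\lambda}\,|n_1(\lambda)|^{2}}$ for $m=1$. Hence $\dfrac{1}{2\sqrt{\lambda}\,|n_1(\lambda)|^{2}}=2\operatorname{Im} m(\lambda)$, and the correction term is precisely $-2i\dfrac{\operatorname{Im} m(\lambda)}{p_1\,m(\lambda)}E_{p_1}$, giving \eqref{eq:5.64}.

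There is really no hard step here; the only point requiring a little care is bookkeeping the scalar reduction consistently — in particular remembering that $E_{p_1}$ stays a $p_1\times p_1$ matrix while everything carrying an inner index $m$ collapses to a number, and keeping track of the sign (the $+\frac{i}{2\sqrt\lambda}$ in \eqref{scatmatr} combined with the $(-\sum m_j)^{-1}$ produces the overall minus sign in (i)). I would also remark that both displayed formulas are trivially well defined for a.e.\ $\lambda\in\mathbb{R}_+$, since by Theorem \ref{corollary 141} the non-tangential limits $m_j(\lambda+i0)$ and $n_{1,j}(\lambda+i0)$ exist and $n_{1,j}(\lambda+i0)$ is invertible for every $\lambda\in\mathbb{R}_+$, while $\sum_{j=1}^{p_1}\operatorname{Im} m_j(\lambda)>0$ guarantees $\sum_{j=1}^{p_1}m_j(\lambda)\neq 0$; the spectral representation claim is inherited verbatim from Theorem \ref{th.ScS}(i) with $\mathcal{H}_{ac}=\mathbb{C}^{p_1}$.
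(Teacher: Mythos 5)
Your proposal is correct and follows essentially the same route as the paper: substitute $m=1$, $p_2=0$, $\alpha(0)=0$ into \eqref{scatmatr} so that $\alpha(0)-K(\lambda)=-\sum_{j=1}^{p_1}m_j(\lambda)$ is a scalar (giving (i)), and then, for identical potentials, use $N_1(\lambda)=n_1(\lambda)I_{p_1}$ together with the scalar case of \eqref{eq:3.51}, i.e. $|n_1(\lambda)|^{-2}=4\sqrt{\lambda}\,\im m(\lambda)$, to obtain \eqref{eq:5.64}. The sign bookkeeping and the well-definedness remarks match what the paper does (the paper in fact only writes out the computation for part (ii), treating (i) as immediate).
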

\begin{proof}
Clearly, one gets  $\alpha-K(\gl) = -p_1m(\gl)$, $\gl \in \bR_+$. Moreover, we have
$N_1(\gl) = n_1(\gl)\cdot I_{p_1}$, $\gl \in \bR_+$. Hence we obtain
\begin{equation}\label{eq:5.65}
S({\bf H}_\ga, {\bf H}_D;\gl) = I_{p_1}-\frac{i}{2\sqrt{\lambda}\;p_1\; m(\gl)}
\frac{1}{\overline{n_1(\gl)}n_1(\gl)}E_{p_1}, \quad \gl \in \bR_+.
\end{equation}
It follows from \eqref{eq:3.51} that
$\frac{1}{\overline{n_1(\gl)}n_1(\gl)} = 4\;\sqrt{\gl}\;\im m(\gl), \quad \gl \in \bR_+.$
Inserting this expression  into \eqref{eq:5.65} we arrive at  \eqref{eq:5.64}.
\end{proof}
\begin{remark}
At the first glance it is  not obvious that the scattering matrix \eqref{eq:5.64} is
unitary. However, using $E^2_{p_1} = p_1E_{p_1}$ we get
\begin{equation}
\begin{split}
S({\bf H}_\ga,& {\bf H}_D;\gl)^*S({\bf H}_\ga, {\bf H}_D;\gl)\\
=&
\left(I_{p_1}+2i\frac{\im m(\gl)}{p_1\; \overline{m(\gl)}}E_{p_1}\right)
\left(I_{p_1}-2i\frac{\im m(\gl)}{p_1 \;m(\gl)}E_{p_1}\right)\\
=& I_{p_1} +2i\frac{\im m(\gl)}{p_1}\left(\frac{1}{\overline{m(\gl)}}-\frac{1}{m(\gl)}\right)E_{p_1} +
4\frac{\left(\im m(\gl)\right)^2}{p_1^2|m(\gl)|^2}E^2_{p_1}\\
=& I_{p_1} - 4\frac{\left(\im m(\gl)\right)^2}{p_1|m(\gl)|^2}E_{p_1} + 4\frac{\left(\im
m(\gl)\right)^2}{p_1|m(\gl)|^2}E_{p_1} = I_{p_1}, \quad \lambda \in \mathbb{R}_+.
\end{split}
\end{equation}
\end{remark}
\begin{remark}
In the case of $\alpha(0)=\mathbb{O}_m$ formula \eqref{eq.6.194} is simplified to
\begin{equation}
S({\bf H}_\ga, {\bf H}_D;\gl)=
I_{m} - \frac{i}{2\sqrt{\lambda}} \left(N_2(\gl)N_1(\gl)^*\right)^{-1}, \qquad  \gl \in \bR_+.
\end{equation}
\end{remark}

  \subsection{Perturbation determinants}
Here using the results from \cite{MalNei2014} we compute the perturbation determinant
  $\widetilde{\Delta}_{{\bf H}_{\alpha}/{\bf H}_D}^{\widehat{\Pi}}(\zeta,z)$
of the pair $\{{\bf H}_{\alpha},{\bf H}_D\}$  with respect to  the boundary triplet
$\widehat{\Pi}$ given by \eqref{botrnew}. For the definition of perturbation determinant
$\widetilde{\Delta}_{{\bf H}_{\alpha}/{\bf H}_D}^{\widehat{\Pi}}(\zeta,z)$ see \cite{Yaf92}.
   \begin{proposition}
Let ${\bf H}_{\alpha}(\in \Ext_{\widehat{A}})$ be the Hamiltonian
given by   \eqref{deltop},
and let ${\bf H}_D$ be the Dirichlet operator on the star graph $\mathcal{G}$.
Let also $\widehat{\Pi}$ be the boudary triplet for $\widehat{A}^*$ defined by \eqref{botrnew}.
Then for any
$\zeta\in\rho({\bf H}_{\alpha})\cap\rho({\bf H}_D)$
the perturbation determinant with respect to the triplet $\widehat{\Pi}$ admits the   representation:
   \begin{equation}\label{perturbdet}
\widetilde{\Delta}_{{\bf H}_{\alpha}/{\bf H}_D}^{\widehat{\Pi}}(\zeta,z)=
\frac{\det\left(\widehat{C}-\widehat{D}M(z)\right)}{\det\left(\widehat{C}-\widehat{D}M(\zeta)\right)}, \qquad
z\in\rho({\bf H}_D),
\end{equation}
where $\widehat{C}$ and $\widehat{D}$ are given by \eqref{mC} and \eqref{mD} respectively, and $M(\cdot)$ is the Weyl function given by \eqref{Wefnew}.
\end{proposition}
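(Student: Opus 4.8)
The plan is to derive formula \eqref{perturbdet} from the general formula for the perturbation determinant of a pair of extensions with respect to a fixed boundary triplet, which is exactly the subject of \cite{MalNei2014}. Recall the abstract setup: $\widehat A$ is the intermediate symmetric operator, $\widehat\Pi = \{\widehat{\mathcal H},\widehat\Gamma_0,\widehat\Gamma_1\}$ is a boundary triplet for $\widehat A^*$ with Weyl function $M(\cdot)$, the Dirichlet operator is ${\bf H}_D = \widehat A^*\!\upharpoonright\ker(\widehat\Gamma_0) = \widehat A_0$, and the Hamiltonian ${\bf H}_\alpha = \widehat A^*\!\upharpoonright\ker(\widehat D\widehat\Gamma_1 - \widehat C\widehat\Gamma_0)$ corresponds, in the notation of \eqref{C-D}, to the boundary relation $\Theta = \ker(\widehat C\;\;\widehat D)$. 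Since the deficiency indices $\mathrm n_\pm(\widehat A) = mp$ are finite, $\widehat{\mathcal H} = \mathbb C^{mp}$ is finite-dimensional, so all determinants below are ordinary determinants of finite matrices and no regularization issues arise.

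\textbf{Key steps.} First I would quote the abstract result from \cite{MalNei2014}: for a pair $\{A_\Theta, A_0\}$ with $\Theta$ the graph-type relation associated with a pair $\{\widehat C,\widehat D\}$ as in \eqref{C-D}, the perturbation determinant with respect to $\widehat\Pi$ is, up to the normalizing value at the reference point $\zeta$,
\begin{equation*}
\widetilde\Delta_{{\bf H}_\alpha/{\bf H}_D}^{\widehat\Pi}(\zeta,z) = \frac{\det\bigl(\widehat C - \widehat D M(z)\bigr)}{\det\bigl(\widehat C - \widehat D M(\zeta)\bigr)}.
\end{equation*}
This is the finite-dimensional specialization of the Krein-type formula \eqref{II.1.4_01}: using \eqref{themM-Prelim}, namely $(\Theta - M(z))^{-1} = (\widehat C - \widehat D M(z))^{-1}\widehat D$, together with the identity $(A_\Theta - z)^{-1} - (A_0 - z)^{-1} = \gamma(z)(\Theta - M(z))^{-1}\gamma^*(\bar z)$, one computes $\det\bigl(I + (A_\Theta - z)^{-1}(({\bf H}_D - z) - ({\bf H}_\alpha - z))\bigr)$ and the $\gamma$-field factors cancel against the derivative of $M$, leaving precisely $\det(\widehat C - \widehat D M(z))$ normalized at $\zeta$; this is the content of the cited theorem, so I would simply invoke it rather than reprove it. Then I would verify that the hypotheses apply here: $\widehat\Pi$ from \eqref{botrnew} is indeed a boundary triplet for $\widehat A^*$ (already established in the text preceding \eqref{botrnew}), ${\bf H}_D = \widehat A_0$ corresponds to $\Theta_0 = \{0\}\times\widehat{\mathcal H}$, ${\bf H}_\alpha = A_\Theta$ with $\Theta = \ker(\widehat C\;\;\widehat D)$ by \eqref{eq.H-alpha=A_theta}, and $\{\widehat C,\widehat D\}$ satisfy $\widehat C\widehat D^* = \widehat D\widehat C^*$ together with the maximality condition $\det(\widehat C\widehat C^* + \widehat D\widehat D^*)\neq 0$, so that $\Theta = \Theta^*$ and ${\bf H}_\alpha$ is self-adjoint. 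Finally, for $\zeta\in\rho({\bf H}_\alpha)\cap\rho({\bf H}_D)$ the denominator $\det(\widehat C - \widehat D M(\zeta))$ is nonzero precisely because $\zeta\in\rho({\bf H}_\alpha)$ forces $0\in\rho(\Theta - M(\zeta))$ (Proposition \ref{prop_II.1.4_spectrum}(ii)), which by \eqref{themM-Prelim} is equivalent to invertibility of $\widehat C - \widehat D M(\zeta)$; hence the right-hand side of \eqref{perturbdet} is well-defined and holomorphic in $z\in\rho({\bf H}_D)$.

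\textbf{Main obstacle.} The one point requiring care is matching conventions: the cited paper \cite{MalNei2014} may state the perturbation determinant with respect to the triplet as $\det(C - DM(z))$ up to a $z$-independent (but possibly triplet-dependent) constant, and one must confirm that the chosen normalization — dividing by the value at the reference point $\zeta$, which makes $\widetilde\Delta_{{\bf H}_\alpha/{\bf H}_D}^{\widehat\Pi}(\zeta,\zeta) = 1$ — is exactly the one in \eqref{perturbdet}. Beyond that bookkeeping, the proof is essentially a citation plus the elementary observations that $\widehat{\mathcal H}$ is finite-dimensional and that \eqref{themM-Prelim} converts the abstract relation-resolvent $(\Theta - M(z))^{-1}$ into the block-matrix expression $(\widehat C - \widehat D M(z))^{-1}\widehat D$ already used in the proof of Theorem \ref{th.ScS}. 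I would write the argument as a two- or three-sentence proof: invoke \cite{MalNei2014}, note the identification of ${\bf H}_D$ and ${\bf H}_\alpha$ with $\widehat A_0$ and $A_\Theta$ via \eqref{eq.H-alpha=A_theta}, and record that $M(\cdot)$ is the Weyl function \eqref{Wefnew}, so that the general formula specializes to \eqref{perturbdet}.
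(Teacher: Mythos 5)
Your proposal is correct and follows essentially the same route as the paper: cite the abstract perturbation-determinant formula from \cite{MalNei2014}, identify ${\bf H}_D=\widehat A_0$ and ${\bf H}_\alpha=\widehat A_\Theta$ with $\Theta=\ker(\widehat C\;\;\widehat D)$ via \eqref{eq.H-alpha=A_theta}, and use \eqref{themM-Prelim} to rewrite $(\Theta-M(\zeta))^{-1}$ as $(\widehat C-\widehat D M(\zeta))^{-1}\widehat D$. The only cosmetic difference is that the paper starts from the intermediate form $\det\bigl(I+(\Theta-M(\zeta))^{-1}(M(\zeta)-M(z))\bigr)$ and performs the one-line factorization $I+(\widehat C-\widehat D M(\zeta))^{-1}\widehat D(M(\zeta)-M(z))=(\widehat C-\widehat D M(\zeta))^{-1}(\widehat C-\widehat D M(z))$ before taking determinants, whereas you quote the final quotient directly; your added remarks on non-vanishing of the denominator are a harmless supplement.
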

  \begin{proof}
In accordance with  Proposition 4.3 from \cite{MalNei2014} for any proper  extension $\widetilde{A}\in \Ext_{\widehat{A}}$
the perturbation determinant $\widetilde{\Delta}_{\widetilde{A}/A_0}(\zeta,z)$ is given by
   \begin{equation}\label{perturbdet-old}
\widetilde{\Delta}_{\widetilde{A}/A_0}(\zeta,z)=\det\left(I_{\mathcal{H}}+(\Theta-M(\zeta))^{-1}(M(\zeta)-M(z))\right)
   \end{equation}
for $\zeta\in\rho(\widetilde{A})\cap\rho(A_0)$ and $z\in\rho(A_0)$.
Note that  in the boundary triplet $\widehat{\Pi}$ for   $\widehat{A}^*$ the Hamiltonian ${\bf H}_{\alpha}$
admits a representation  \eqref{eq.H-alpha=A_theta}, i.e. ${\bf H}_{\alpha} = {\widehat A}_{\Theta}$
with $\Theta = \ker \left(\widehat{C}\,\,\, \widehat{D}\right)$.
Further, recall that in accordance with  \eqref{themM-Prelim}
  \begin{equation}\label{themM}
(\Theta-M(\zeta))^{-1}=\left(\widehat{C}-\widehat{D}M(\zeta)\right)^{-1}\widehat{D}.
  \end{equation}
Noting that  in our case $\widetilde{A}={\bf H}_{\alpha}$, $A_0={\bf H}_D$, $\mathcal{H}=\mathbb{C}^{mp\times mp}$, and
inserting \eqref{themM} in \eqref{perturbdet-old}  we derive
   \begin{equation}\label{pdetn2}
\begin{split}
\widetilde{\Delta}_{{\bf H}_{\alpha}/{\bf H}_D}^{\widehat{\Pi}}(\zeta,z)=
\det\left(I_{mp}+\left(\widehat{C}-\widehat{D}M(\zeta)\right)^{-1}\widehat{D}\left(M(\zeta)-M(z)\right)\right) \\
=\det\left(\left(\widehat{C}-\widehat{D}M(\zeta)\right)^{-1}\left(\widehat{C}- \widehat{D}M(z)\right)\right).
\end{split}
  \end{equation}
Applying the chain rule for determinants
we arrive at \eqref{perturbdet}.
\end{proof}
\begin{remark}

\item[\;\;\rm (i)]  The matrix $(\widehat{C}-\widehat{D}M(\cdot))^{-1}$ is explicitly
computed  in  \eqref{LamD-0};

\item[\;\;\rm (ii)]  Inserting formula \eqref{CDfactD}  in the first equality in  \eqref{pdetn2} we arrive at another expression for
the perturbation determinant $\widetilde{\Delta}_{{\bf H}_{\alpha}/{\bf H}_D}^{\widehat{\Pi}}(\zeta,z):$
    \begin{equation}
\widetilde{\Delta}_{{\bf H}_{\alpha}/{\bf H}_D}^{\widehat{\Pi}}(\zeta,z)=
\det\left(I_{mp}+\left((\alpha(0)-K(\zeta))^{-1}\otimes E_p\right)(M(\zeta)-M(z))\right),
\end{equation}
where $K(\zeta):=\sum_{j=1}^p M_j(\zeta)$  and  $E_p\in\mathbb{C}^{p\times p}$ is the matrix
(of the rank one) consisting of $p^2$ units (see \eqref{EpEp-1}),
and $\zeta\in\mathbb{C}_+\cup\mathbb{R}_+$, $z\in\rho({\bf H}_D)$.
\end{remark}

{\bf Acknowledgments.} Research supported by
the ''RUDN University Program 5-100'' (M.M.) and by the European Research Council (ERC) under Grant No. AdG 267802 "AnaMultiScale" (H.N.).

The authors are indebted to anonymous referees for  useful remarks and comments
allowing us to improve the exposition.

{\bf Yaroslav Granovskyi}

Institute of Applied Mathematics and Mechanics, NAS of Ukraine

E-Mail: {\tt yarvodoley@mail.ru}

{\bf Mark Malamud}

Peoples Friendship University of Russia (RUDN University), Russian Federation

E-Mail: {\tt malamud3m@gmail.com}

{\bf Hagen Neidhardt}

(Deceased)

\end{document}